\documentclass[10pt]{article}
\usepackage{amssymb,amsmath,amsthm}
\usepackage{bm}
\usepackage{}
\usepackage{amsbsy}
\usepackage{psfrag}
\usepackage{graphics}
\usepackage{graphicx}
\usepackage{pgfplots}
\usepackage{relsize}
\usepackage{color}
\usepackage{xcolor}
\usepackage{mathtools}
\usepackage{pdfpages}
\usepackage{url}
 \usepackage{hyperref}
\usepackage{graphicx}
\usepackage{caption}
\usepackage{subcaption}
\usepackage{algpseudocode}
\usepackage{algorithm}
\usepackage{verbatim}
\usepackage{wrapfig}
\hypersetup{
	colorlinks=true,
	linkcolor=blue,
	filecolor=magenta,
	urlcolor=cyan,
}
\pgfplotsset{compat=1.18}

\setlength{\textheight}{8.1truein}
\setlength{\textwidth}{6.5truein}
\setlength{\oddsidemargin}{-0.06in}
\setlength{\evensidemargin}{-0.06in} \headheight .4truein
\hfuzz=11pt

\theoremstyle{plain}
\newtheorem{theorem}{Theorem}[section]
\newtheorem{proposition}{Proposition}[section]
\newtheorem{definition}{Definition}[section]

\newtheorem{lemma}{Lemma}[section]

\newtheorem{example}[theorem]{Example}
\newcommand{\norm}[1]{\left\Vert #1 \right\Vert}
\newcommand{\brb}[1]{\Bigl( #1 \Bigr)}

\newtheorem{remark}[theorem]{Remark}
\date{}

\begin{document}
\allowdisplaybreaks[4]
%
%
%
 \numberwithin{equation}{section}
%

\author{Ramesh Chandra Sau \thanks{Department of Mathematics, Indian Institute of Technology, Bombay, India (\texttt{rcsau1994@gmail.com, rcsau@math.iitb.ac.in})} \and Luowei Yin \thanks{Department of Mathematics, The Chinese University of Hong Kong, Shatin, New Territories, Hong Kong, P.R. China (\texttt{lwyin@math.cuhk.edu.hk}).}}

\title{A Comparative Study of Neural Network Solvers for Second-order Boundary Value Problems}

\maketitle


\begin{abstract}
    Deep learning-based partial differential equation(PDE) solvers have received much attention in the past few years. Methods of this category can solve a wide range of PDEs with high accuracy, typically by transforming the problems into highly nonlinear optimization problems of neural network parameters. This work does a comparative study of several deep learning solvers proposed a few years back, including PINN, WAN, DRM, and VPINN. Numerical results are provided to make comparisons amongst them and address the importance of loss formulation and the optimization method. A rigorous error analysis for PINN is presented and a brief error estimate results for other methods have been discussed. Finally, we discuss the current limitations and bottlenecks of these methods. 
\end{abstract}

\section{Introduction}
Efficient numerical solvers for partial differential equations (PDEs) play an important role in the development of science and engineering. Classical solvers like finite element solver or finite difference solver often discretize the domain and/or the function space, after which the problem is assembled into matrix-vector systems and solved by well-established efficient matrix solvers. Recently, deep learning methods have received increasing attention in the PDE community. These new methods adopt neural networks as an approximator. In contrast with classical function approximators, the parametrization of a neural network is highly non-linear. With the merit of the powerful computation hardware, the neural networks can be efficiently evaluated or differentiated, hence becoming an alternative to classical approximations. Neural networks are born to be mesh-free, known to be universal approximations and have the potential to overcome the curse of dimensionality. The deep learning-based method for solving PDE was first introduced in the work by Raissi et.al. \cite{RaissiPerdolarisKarniadakis:2019} in 2019.  It is popularly known as physics-informed neural networks(PINN). This method is based on the residual minimization of the PDE and it only uses the strong form of the PDE. After that, there are many enrichments of PINN and weak formulation-based methods have been introduced such as, deep Ritz method(DRM)\cite{yu2018deep}, variational PINN(VPINN) \cite{berrone2022variational}, weak adversarial net(WAN) \cite{zang2020weak} operator-learning methods\cite{lu2019deeponet,li2020fourier}. 

The PINN takes advantage of the smoothness of neural networks. The loss function is constructed by the PDE residual augmented with penalty terms to impose the boundary and initial conditions. This formulation is intuitive and versatile for various PDEs. However, in practice, the PINN usually fails to accurately approximate the solution. A first insight is the difficulty brought by the penalty terms, which introduces a balance between the condition of problem and the violation of constraints. This is a common issue in penalty methods, and several variants of PINN are proposed to address it; Wang et al. \cite{wang2021understanding} analyzed the stiffness of the gradient flow and suggests that the penalty weight can be made adaptive by balancing the scale of gradient of each term in the loss function. In Lu et.al.\cite{lu2021physics}, they suggests to design the neural network architecture such that the boundary constraints are strictly satisfied. This variant is quite straightforward and effective, but constructing such neural networks can be difficult, especially when the geometry is complex. Another class of variants is motivated by the common practice of constrained optimization. In the work of Son et.al.\cite{son2023enhanced}, augmented Lagrangian approach is adopted. The loss function comprises both penalty and multiplier terms. In the classical analysis of constrained optimization, this formulation is known to reduce the ill-condition of penalty methods\cite{Bertsekas:1982}. In addition to the focus on penalty terms, there is another line of research that addresses the importance of sampling strategy\cite{gao2023failure,wu2023comprehensive,zeng2022adaptive}. These works place more emphasis on the high-dimensional application of PINNs. Indeed, to accurately evaluate the integrals in the loss function, quadratures become less effective in high dimensional setting, and the method of choice is usually Monte Carlo. The uniform distribution of the collocation points cannot efficiently capture the local structure of the solutions; hence, it is suggested to perform adaptive sampling according to some error estimators.

The loss function in PINN is constructed from strong formulation of PDEs. As a consequence, approximated solution via PINN can become inaccurate when solution possess less regularity. This issue can be improved by utilizing weak formulations. The DRM stands as a pioneering approach to leveraging deep learning for solving variational problems. Rooted in the classical Ritz method, which seeks to solve variational problems by minimizing a functional (known as the Ritz functional) over a predefined trial function space, the deep Ritz method takes a transformative leap by employing deep neural networks to represent the trial function. This departure from traditional methods allows for a more flexible and adaptive representation, as neural networks can capture intricate patterns and nonlinearities inherent in the problem domain. By optimizing the parameters of the neural network, the deep Ritz method effectively minimizes the Ritz functional, yielding solutions that exhibit enhanced accuracy and generalizability across a wide range of variational problems. The VPINN and WAN are also weak formulation-based methods to approximate a weak solution of PDE. 

The optimization problem for neural network parameters is the central task to find a good approximation of the solution, but it is highly non-convex and poorly understood. Despite this fact, there are several works that suggest some optimization algorithms that are empirically observed to be more suitable for PINN. First-order methods like SGD and ADAM \cite{kingma2014adam} are of choice in most neural network methods because the optimization of DNN is usually large in scale. Recently, the limited-memory BFGS (L-BFGS) algorithm has been found to obtain a more accurate solution than ADAM; see e.g. the numerical results in (hPINN OCP). A distinct property of the L-BFGS algorithm is its ability to handle large-scale problems with faster convergence than first-order methods. The performance of L-BFGS in PINN suggests that utilizing the curvature information of the loss function may help to improve the accuracy. Recently, natural gradient descent is found to be useful in the training of PINNs; see \cite{muller2023achieving,nurbekyan2023efficient}. Natural gradient descent is essentially a steepest gradient descent in function space. The descent direction in parameter space is chosen to best approximate the function gradient in the tangent space of model manifold. Under this viewpoint, this method is hence first-order. In the work of Nurbekyan et.al.\cite{nurbekyan2023efficient}, the authors derived the natural gradient in various function spaces and proposed a detailed algorithm to compute the parameter descent direction efficiently. The numerical results suggest that natural gradient descent is much more effective than steepest gradient descent in terms of both accuracy and computation time, as long as the neural network is relatively small and the inverse of Fisher information matrix can be quickly calculated or approximated. In the work of Muller et al.\cite{muller2023achieving}, the authors proposed optimizing the neural network in the Hessian manifold induced by the loss function, which leads to a Newton update in the function space. Their numerical results show a very high accuracy that breaks the accuracy saturation of PINN observed in many applications. A rigorous proof of convergence is lacking, but this work again indicates that utilizing the curvature information of the loss function is beneficial in the training of PINNs. A more comprehensive survey of recent optimization methods for deep learning can be found in \cite{abdulkadirov2023survey}. We should mention that although the understanding of neural network optimization is far from mature, an effective optimization algorithm is crucial to the success of PINN.

The advantage of PINN over classical methods is still limited to high-dimensional settings or to the cases where a mesh-free approach is strongly needed. In the low dimensional setting, solving the parameters in PINN takes significantly more time than classical methods. In \cite{grossmann2023can} the author compared PINN with FEM by numerical examples ranging from 1D to 3D. The results suggested that PINN is faster when extrapolation is needed, but its training always take more time than FEM. Solving the coefficients in FEM is a linear problem with many well-developed solvers, but such an efficient solver for neural networks is still missing.

In this article, we describe and compare four popular deep learning-based solvers: PINN, DRM, WAN, and VPINN. Numerically, we demonstrate how the PINN method differs from other weak formulation-based methods such as DRM, WAN, and VPINN. We show that while PINN fails to approximate non-smooth solutions, DRM, WAN, and VPINN manage to do so to some extent. WAN involves a maximization problem that lacks a unique solution, leading to the failure of the optimization algorithm. Conversely, VPINN is dimensionally restricted due to its dependence on a mesh. Additionally, DRM requires a symmetric bilinear form. Each of these methods has its own limitations, necessitating careful selection for optimal results. We provide a rigorous error analysis of the PINN method for a second-order elliptic PDE and briefly discuss the error analyses of DRM, WAN, and VPINN through several remarks in this article.

The rest of the paper is organized as follows. In Section \ref{sec:preli_dnns}, we describe some preliminaries on neural networks and PINN. In Section \ref{sec:dnn_solvers}, we describe some neural network-based solvers for partial differential equations. In Section \ref{sec:experiments}, we present the results of numerous numerical experiments conducted to evaluate the efficacy of the neural network-based solvers. Additionally, a comparative study of performance of solvers is meticulously conducted within this section, offering insights into their relative performance.  Section \ref{sec:error_analysis} is devoted to the error analysis of neural network-based solvers.

\section{Deep neural networks}\label{sec:preli_dnns}
Deep neural networks (DNNs) have been widely used due to their extraordinary expressivity for approximating functions, and are essential to the great success of deep learning. In this article, fully connected feedforward neural networks have been taken into consideration. Such neural networks are compositions of affine linear transformation and nonlinear activations. More specificly, fix $L\in\mathbb{N}$, and integers $\{n_\ell\}_{\ell=0}^L\subset \mathbb{N}$, with $n_0=d$ and $n_L=1$. A neural network $u_\theta:\mathbb{R}^d\to \mathbb{R}$, with $\theta\in\mathbb{R}^{N_\theta}$ being the DNN parameters, is defined recursively by
\begin{figure}
		\centering
		\includegraphics[width=15cm, height=8cm]{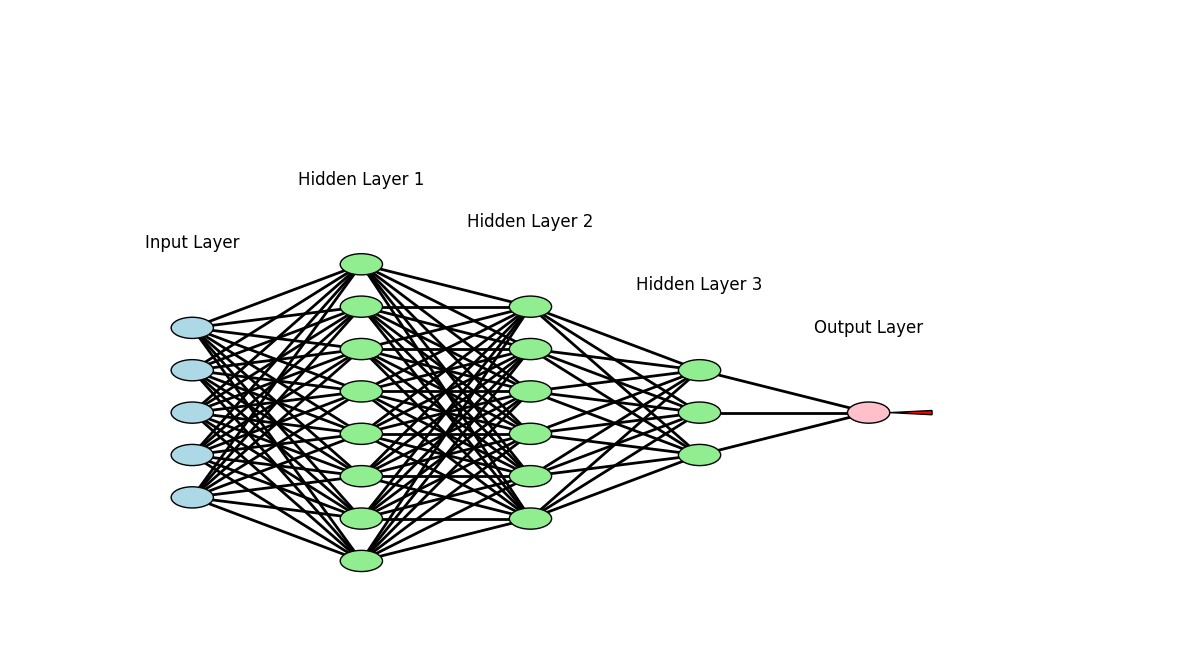}
  \caption{A four-layer neural network}
\end{figure}
\begin{align*}
  u^{(0)}(x) & = x,\\
  u^{(\ell)}(x) & =  \rho (W^{(\ell)}u^{(\ell-1)}+ b^{(\ell)}),\quad \ell = 1,\ldots, L-1,\\
  u_\theta:= u^{(L)}(x) &= W^{(L)} u^{(\ell-1)} + b^{(L)},
\end{align*}
where $W^{(\ell)}=[w_{ij}^{(\ell)}]\in \mathbb{R}^{n_\ell\times n_{\ell-1}}$, and $b^{(\ell)}=(b_i^{(\ell)})\in
\mathbb{R}^{n_\ell}$ are known as the weight matrix and bias vector at the $\ell$th layer, respectively. The mapping $\rho:\mathbb{R}\to\mathbb{R}$ is the nonlinear activation function.
$L$ is called the depth of the neural network, and $\omega :=\max(n_\ell, \ell=1,\ldots,L)$ is called
the width of the neural network. 
We denote by $\boldsymbol n_i$, $i=1,\ldots,L$, as the number of nonzero weights on
the first $i$ layers. Then $\boldsymbol{n}_L$ is the total number of nonzero weights. We use the notation
$\mathcal{N}_\rho(L,\boldsymbol{n}_L,R)$ to refer to the collection of DNN functions generated by $\rho$ of depth $L$, total number  $\boldsymbol n_L$ of nonzero weights and
each weight bounded by $R$. Throughout, we focus on two activation functions
$\rho$: hyperbolic tangent $\rho(t)=\frac{e^{t}-e^{-t}}{e^{t}+e^{-t}}$
and sigmoid $\rho(t)=\frac{1}{1+e^{-t}}$, which are popular in the literature on PINN.

\section{PDE solvers by deep neural networks}\label{sec:dnn_solvers}
Let $\Omega\subset\mathbb{R}^d$ be a smooth domain with boundary $\partial\Omega$ with $d\geq 1$. We consider the following general second-order elliptic partial differential equation
\begin{equation}\label{eqn:Poisson0}
    \left\{\begin{aligned}
         -\textit{div}(\mathcal{A}\nabla u) + \bm{\beta} \cdot \nabla u + cu &= f \quad \mbox{in }\Omega,\\
        u & = g\quad \mbox{on }\partial\Omega,
    \end{aligned}\right.
\end{equation}
where $\mathcal{A}= [a_{ij}]_{d\times d}$ with $a_{ij}\in C^1(\bar{\Omega}),$ and $\mathcal{A}$ satisfies the strict ellipticity condition, i.e., there exists a constant $\lambda>0$ such that $\xi^T\mathcal{A}\xi\geq \lambda |\xi|^2$ for all $\xi \in \mathbb{R}^d$, $\bm{\beta}\in [L^{\infty}(\Omega)]^d$, $c\in L^{\infty}(\Omega)$, and $f\in L^2(\Omega).$

\subsection{Physics informed neural networks(PINNs)}\label{ssec:pinn}
We describe physics informed neural networks (PINNs) of Rassi et al \cite{RaissiPerdolarisKarniadakis:2019}, for solving the elliptic boundary value problem \eqref{eqn:Poisson0}. PINN is based on the principle of PDE residual minimization. For problem \eqref{eqn:Poisson0}, the continuous loss $\mathcal{L}(u)$ is given by
\begin{equation}\label{cont:loss}
    \mathcal{L}(u) = \|-\textit{div}(\mathcal{A}\nabla u) + \bm{\beta} \cdot \nabla u + cu - f\|_{L^2(\Omega)}^2 + \alpha \|u-g\|_{L^2(\partial\Omega)}^2,
\end{equation}
where the penalty term $\|u-g\|_{L^2(\partial\Omega)}^2$ (with weight $\alpha>0$) is to approximately enforce the Dirichlet boundary condition. Let $U(\Omega),U(\partial\Omega)$ be the uniform distributions over sets $\Omega,\partial\Omega$, and $|\Omega|,|\partial\Omega|$ be their Lebesgue measures respectively, and $\mathbb{E}_{U(\Omega)},\mathbb{E}_{U(\partial\Omega)}$ be the corresponding expectations.
Then we can write the continuous loss \eqref{cont:loss} in terms of expectations: 
\begin{equation}\label{cont:loss1}
    \mathcal{L}(u) = |\Omega|\mathbb{E}_{X\sim U(\Omega)}[|(-\textit{div}(\mathcal{A}\nabla u) + \bm{\beta} \cdot \nabla u + cu - f)(X)|^2] +\alpha |\partial\Omega|\mathbb{E}_{Y\sim U(\partial\Omega)}[|(u-g)(Y)|^2].
\end{equation}
Let the sampling points $\{X_i\}_{i=1}^{n}$ and $\{Y_j\}_{j=1}^{m}$ be identically and independently distributed (i.i.d.), uniformly on the domain $\Omega$ and boundary $\partial\Omega$, respectively, i.e., $\mathbb{X}=\{X_i\}_{i=1}^{n}\sim U(\Omega)$ and $\mathbb{Y}=\{Y_j\}_{j=1}^{m}\sim U(\partial\Omega)$. Now we replace $u$ by $u_\theta$ in \eqref{cont:loss1} and apply the Monte Carlo method with the above interior and boundary sample points to obtain the following empirical loss $\widehat{\mathcal{L}}(u)$:
\begin{equation}\label{loss:pinn}
    \widehat{\mathcal{L}}(u) = \frac{|\Omega|}{n}\sum_{i=1}^{n}|(-\textit{div}(\mathcal{A}\nabla u) + \bm{\beta} \cdot \nabla u + cu - f)(X_i)|^2+ \frac{\alpha |\partial\Omega|}{m}\sum_{j=1}^{m}|(u-g)(Y_j)|^2 .
\end{equation}

Define PINN formulation of the problem \eqref{eqn:Poisson0} as follows: find $u_{\theta^*}\in \mathcal{U}:=\mathcal{N}_\rho(L,\boldsymbol{n}_L,R)$ such that
\begin{align}\label{PINN:formulation}
 u_{\theta^*}\in \underset{u_{\theta} \in \mathcal{U}}{\text{argmin}} \quad \widehat{\mathcal{L}}(u_{\theta}).
\end{align}

\begin{algorithm}
\caption{PINN}\label{alg:pinn}
\begin{algorithmic}[hbt]
\State Input: $n$,$m$, $\alpha$
\State Initialize: $\theta^0$ 
\State Set $k=0$
\State Sample collocation points $\{X_i\}_{i=1}^n$ in the interior($\Omega$), and $\{X_i\}_{i=1}^n$ on the boundary($\partial\Omega$).
\While{$k\leq N$} 
    \State $\theta^{k+1}\gets \theta^k - \tau_\theta \nabla_{\theta}\hat{\mathcal{L}}(\theta^k),$ compute $\nabla_\theta\hat{\mathcal{L}}(\theta^k)$ from \eqref{loss:pinn}
\EndWhile
\State Output: PINN solution $u_{\theta^{N+1}}.$
\end{algorithmic}
\end{algorithm}

\begin{remark}
 Note that the resulting optimization problem \eqref{PINN:formulation} has a solution due to the box constraint on the DNN parameters $\theta$, i.e., $|\theta|_{\ell^\infty}\leq R$ for suitable $R$, which induces a compact set in $\mathbb{R}^{\mathbf{n}_L}$. Meanwhile the empirical loss $\widehat{\mathcal{L}}(u_{\theta})$ is continuous in $\theta$, when the activation function $\rho$ is smooth. In practice, the minimizer $u_{\theta^*}$ is commonly achieved by off-shelf optimizers, e.g., limited memory BFGS \cite{ByrdLu:1995} and Adam \cite{KingmaBa:2015}, which are implemented in many public software frameworks, e.g., PyTorch or TensorFlow.  To evaluate the loss $\widehat{\mathcal{L}}(u_{\theta})$, one requires computing the derivative of the network outputs $u_{\theta}$ with respect to the input $x$ (i.e., spatial derivatives), and to apply a first-order optimizer, one requires computing the derivative of the loss $\widehat{\mathcal{L}}(u_{\theta})$ with respect to the DNN parameter $\theta$. Both can be realized using automatic differentiation efficiently \cite{Baydin:2018}, e.g., \texttt{torch.autograd} in PyTorch. Given a (local) minimizer $\theta^*$, the corresponding DNN approximations of the state $u\in\mathcal{U}$ is given by $u_{\theta^*}$. The theoretical analysis of the PINNs has been carried out in \cite{JiaoLai:2022cicp,ShinDarbon:2020,MishraMolinaro:2023,LuChenLu:2021}.   
\end{remark}

\subsection{Deep Ritz method(DRM)}\label{subsec:drm}
The deep Ritz method is a deep learning-based numerical algorithm for solving variational problems was proposed by Weinan et al \cite{EYu:2018}. It is inspired by the classical Ritz method, which is the solution of a variational problem by minimizing a functional(Ritz functional) over a trial function space. The deep Ritz method uses a deep neural network to represent the trial function, and optimizes the network parameters to minimize the Ritz functional. For this method, we slightly modify the equation \eqref{eqn:Poisson0} with $\bm{\beta}=\mathbf{0}$ to have a symmetric bilinear form. Also, we assume that the reaction coefficient $c(x)\geq 0$ and $g\in H^{1/2}(\partial \Omega)$ for the existence of the weak solution. Thus the continuous loss of the deep Ritz method for the model problem \eqref{eqn:Poisson0}, is as follows:
\begin{align}\label{DRM_cLoss}
     \mathcal{L}(u) =  \int_{\Omega} (\nabla u^{T} \mathcal{A} \nabla u +  c u^2 - f u) + \alpha \int_{\partial \Omega} |u-g|^2.
\end{align}
We discretise \eqref{DRM_cLoss}, to obtain an empirical loss $\hat{\mathcal{L}}$ using the Monte Carlo method. The empirical loss for DRM is given by
\begin{align}\label{drm_emploss}
    \hat{\mathcal{L}}(u) = \frac{|\Omega|}{n} \sum_{i=1}^n \brb{(\nabla u^{T} \mathcal{A} \nabla u + c u^2-fu)(X_i)} +\alpha \frac{|\partial\Omega|}{m} \sum_{i=1}^m |(u-g)(Y_i)|^2.
\end{align}
Hence the DRM formulation of the problem \eqref{eqn:Poisson0} is as follows:
\begin{align}\label{DRM:formulation}
 u_{\theta^*}\in \underset{u_{\theta} \in \mathcal{U}}{\text{argmin}} \quad \widehat{\mathcal{L}}(u_{\theta}).
\end{align}

 The algorithm for DRM is similar to Algorithm \ref{alg:pinn}, one needs to use the DRM loss function \eqref{drm_emploss}.

\subsection{Variational PINN (VPINN)}\label{subsec:vpinn}
The Variational PINN relies on the weak formulation of the PDE \eqref{eqn:Poisson0}, and it was first introduced by Berron et al \cite{berrone2022variational}. Let $\Omega$ be a domain in $\mathbb{R}^d$, $g\in H^{1/2}(\partial\Omega),$ $\bm{\beta}\in [W^{1,\infty}(\Omega)]^n$, and $c-\frac{1}{2} \nabla\cdot\bm{\beta}\geq 0.$ 
Let $u_g\in H^1(\Omega)$ be an extension of $g$ in $\Omega$. 
Then the weak formulation of \eqref{eqn:Poisson0} reads as: find $w\in H^1_0(\Omega)$ such that
\begin{align}\label{cont_weakform}
    a(w,v) = (f,v) - a(u_g,v)\quad \forall\; v\in H^1_0(\Omega),
\end{align}
where $a(p,q)=\int_{\Omega} \nabla p^{T}\mathcal{A}\nabla q + (\bm{\beta}\cdot \nabla p) q+ c pq$. The weak solution of \eqref{eqn:Poisson0} is defined as $u = w+u_g$. In VPINN, we use neural networks to approximate the trial function and finite element functions in test functions. Let $\mathcal{T}_h$ be a mesh for the domain $\Omega$ and $V_h :=\{v_h\in L^2(\Omega)| v_h|_T\in \mathbb{P}_k(T),\; \forall T \in \mathcal{T}_h\} $ be the approximate test function space. Our aim is to find a neural network which approximately solves: find $w_\theta\in H_0^1(\Omega)$ such that
\begin{align}\label{disc_weakform}
    a_h(w_\theta,v_h) = (f,v_h) - a_h(u_g,v_h)\quad \forall\; v_h\in V_h,
\end{align}
where $a_h(p,q) = \sum_{T\in \mathcal{T}_h}\int_{T}\nabla p^{T}\mathcal{A}\nabla q + (\bm{\beta}\cdot \nabla p) q+ c pq \; dx$.

In order to handle the problem \eqref{disc_weakform}, we introduce a basis $\{\phi_i\}_{i=1}^{n_b}$ in $V_h$, and let us define the residuals
\begin{align}\label{residuals}
    r_{h,i}(w_\theta) := a_h(w_\theta, \phi_i) - (f, \phi_i) + a_h(u_g,\phi_i), \quad \text{for}\; i = 1,2,3,\cdots,  n_b, 
\end{align}
as well as the loss function
\begin{align}
    \mathcal{L}_h(w_\theta) = \sum_{i=1}^{n_b} \alpha_i r_{h,i}^2(w_\theta),
\end{align}
where $\alpha_i>0$ are suitable weights. To impose the zero boundary of $w_\theta$, we set $w_\theta = \tilde{w}_\theta \phi_\eta$, where $\phi_eta$ is either a analytical function or pre-trained neural network that approximately vanished on boundary. Finally, we use Monte Carlo method to compute the integrals in $r_{h,i}(w_\theta)$ and obtain $\hat{r}_{h,i}(w_\theta)$. Then we search for a global minimum of the empirical loss function: find $w_{\theta*}\in \mathcal{U}$ such that 
\begin{align}\label{emp_loss_vpinn}
    w_{\theta*}\in \underset{w_{\theta} \in \mathcal{U}}{\text{argmin}} \quad\hat{\mathcal{L}}_h(w_\theta) := \sum_{i=1}^{n_b} \alpha_i \hat{r}_{h,i}^2(w_\theta),
\end{align}
Hence, we define $u=w_{\theta*}+u_g$ as an approximation of the weak solution of \eqref{eqn:Poisson0}. As one can see that the VPINN method needs basis functions of the finite element space $V_h.$ Thus computing basis functions on a certain mesh in high dimension ($\mathbb{R}^d, \; d\geq 4$) is infeasible. Therefore for VPINN we restrict ourselves to two or three-dimensional domains.

\begin{algorithm}
\caption{Variational PINN}\label{alg:vpinn}
\begin{algorithmic}[hbt]
\State Input: Import mesh $\mathcal{T}_h$, construct $\mathbb{P}_k$ polynomial basis for the test space $V_h$, $\alpha_i$\; $i = 1,2,3, \cdots ,n_b$
\State Initialize: Trial neural network parameter $\theta^0$ 
\State Set $k=0$
\While{$k\leq N$} 
    \State $\theta^{k+1}\gets \theta^k - \tau_\theta \nabla_{\theta}\hat{\mathcal{L}}(\theta^k),$ compute $\nabla_\theta\hat{\mathcal{L}}(\theta^k)$ from \eqref{emp_loss_vpinn}
\EndWhile
\State $u^{g}_{\phi*}:= \text{argmin}_{\phi} \frac{1}{M} \sum_{i=1}^N|u_\phi(x_i) - u_g(x_i)|^2,$ be a neural network approximates $u_g$
\State Output: PINN solution $u_{\theta^{N+1}}+u^{g}_{\phi*}.$
\end{algorithmic}
\end{algorithm}

\subsection{Weak adversarial network(WAN)}\label{ssec:wan} Weak adversarial network (WAN) was introduced by Yaohua Zang et al \cite{zang2020weak} to approximate the weak solution of a PDE. We take the same assumption on data as in subsection \ref{subsec:vpinn}. WAN works by transforming the problem of finding the weak solution of PDEs into a minimization problem of an operator norm, and using two neural networks to represent the weak solution and the test function. The two networks are trained in an adversarial way, meaning that they try to minimize and maximize the loss function respectively. By doing so, the WAN method can find an approximate weak solution of PDEs that satisfies the boundary conditions and the weak formulation. Multiplying the \eqref{eqn:Poisson0} by a test function $\phi \in H^1_0(\Omega)$ and integration by parts:
\begin{align}\label{weak_form}
    \langle A[u],\phi \rangle &:= \int_{\Omega} \nabla u^{T} \mathcal{A} \nabla \phi + \int_{\Omega} (\bm{\beta} \cdot \nabla u) \phi + \int_{\Omega} c u \phi - \int_{\Omega} f \phi\\
u &= g \quad \text{on}\; \partial\Omega.
\end{align}
We can consider $A[u]: H^1_0(\Omega)\longrightarrow \mathbb{R}$ as a linear operator such that $A[u](\phi):=\langle A[u],\phi \rangle$ as defined in \eqref{weak_form}. Then the operator norm of $A[u]$ is defined as 
\begin{align}\label{operator_norm}
    \norm{A[u]}_{op} := \sup_{\phi\in H^1_0(\Omega)\setminus \{0\}} \frac{|\langle A[u],\phi \rangle|}{\norm{\phi}_{H^1(\Omega)}}.
\end{align}
Therefore, $u$ is a weak solution of \eqref{eqn:Poisson0} with the boundary condition $u=g$ is satisfied on $\partial \Omega$ if and only if $u$ solves the following minmax problem:
\begin{align}\label{cont_wan}
 \min _{u \in H^{1}}\|A[u]\|_{o p}^{2} = \min _{u \in H^{1}} \sup _{\phi \in H_{0}^{1}}|\langle A[u], \phi\rangle|^{2} /\|\phi\|_{H^1(\Omega)}^{2}   
\end{align}

The formulation \eqref{cont_wan} inspires an adversarial approach to find the weak solution of \eqref{eqn:Poisson0}. More specifically, we seek for the function $u_{\theta}$, realized as a deep neural network with parameter $\theta$ to be learned, such that $A\left[u_{\theta}\right]$ minimizes the operator norm \eqref{cont_wan}. On the other hand, the test function $\phi$, is a deep adversarial network with parameter $\eta$, also to be learned. We define the interior part of the loss function by
\begin{align}\label{loss_wan_int}
   \mathcal{L}_{\mathrm{int}}(u_{\theta}, \phi_\eta) := |\langle A[u_{\theta}], \phi_\eta \rangle|^2 /\|\phi_\eta\|_{H^1(\Omega)}^2. 
\end{align}

In addition, the weak solution $u_{\theta}$ also needs to satisfy the boundary condition as in \eqref{eqn:Poisson0}. Thus we need to add the following boundary loss:
\begin{align}\label{loss_wan_bdry}
   \mathcal{L}_{\text {bdry}}(u_{\theta}) := \norm{u_{\theta}-g}^2_{L^2(\Omega)} 
\end{align}
Thus we can compute loss functions in \eqref{loss_wan_int} and \eqref{loss_wan_bdry} by Monte Carlo method and produce empirical loss components $\hat{\mathcal{L}}_{\mathrm{int}}(u_{\theta}, \phi_\eta)$ and $\hat{\mathcal{L}}_{\text {bdry}}(u_{\theta})$. Hence the total empirical loss function is the weighted sum of the two empirical loss components $\hat{\mathcal{L}}_{\mathrm{int}}(u_{\theta}, \phi_\eta)$ and $\hat{\mathcal{L}}_{\text {bdry}}(u_{\theta})$. Thus the WAN method is to solve the following minmax problem:
\begin{align}\label{wan_minimax}
   \min _{u_{\theta}} \max _{\phi_\eta} \hat{\mathcal{L}}(u_{\theta}, \phi_\eta), \quad \text { where } \hat{\mathcal{L}}(u_{\theta}, \phi_\eta) := \hat{\mathcal{L}}_{\mathrm{int}}(u_{\theta}, \phi_\eta)+\alpha \hat{\mathcal{L}}_{\mathrm{bdry}}(u_{\theta}), 
\end{align}
where $\alpha>0$ is user-chosen balancing parameter. 

The test function in WAN belongs to $H_0^1(\Omega)$. To impose the zero boundary, we follow the setting in the original work\cite{zang2020weak} by setting
 \[
\phi_\eta = w\cdot \hat{\phi_\eta}.
 \]
 where $\hat{\phi_\eta}$ is the neural network, and the function $w$ is either a prescribed and fixed function that strictly satisfy the zero boundary condition, or another neural network pretrained to approximate the boundary data and is kept fixed during training $\hat{\phi_\eta}$. When finding an analytic function described in former condition becomes hard, the latter can be used. The minimax problem \eqref{wan_minimax} is solved in alternating manner: $N_\eta$ steps of optimization algorithm is applied to inner maximization, and then $N_\theta$ steps is applied to outer minimization. 
\begin{algorithm}
\caption{Weak Adversarial Network}\label{alg:wan}
\begin{algorithmic}[hbt]
\State Initialize: $\eta^0$, $\theta^0$. 
\While{$i\leq N_{\text{opt}}$} 
    \State $\eta^i_0\gets \eta^i,\theta^i_0\gets\theta^i$
    \While{$j\leq N_\eta$}
    \State $\eta^i_{j+1} \gets \eta^i_{j} + \tau_\eta \nabla_{\eta}\hat{\mathcal{L}}(\eta^i_j,\theta^i)$
    \EndWhile 
    \State $\eta^{i+1}\gets \eta^i_{N_\eta}$
    \While{$j\leq N_\theta$}
    \State $\theta^i_{j+1}\gets \theta^i_{j} - \tau_\theta \nabla_{\theta}\hat{\mathcal{L}}(\eta^{i+1},\theta^i_j)$
    \EndWhile 
    \State $\theta^{i+1}\gets \theta^{i}_{N_\theta}$
\EndWhile
\end{algorithmic}
\end{algorithm}

The loss function \eqref{wan_minimax} indicates that the maximization problem in the inner loop does not possess a unique solution. This ill-posedness will lead to the failure of the optimization algorithm. Empirically, this method works with fine-tuned hyperparameters: boundary weight should be in the correct scale, and the minimizing and maximizing steps should be balanced. First-order methods, for instance, SGD, AdaGrad, or ADAM, are more suitable than second-order methods, for instance Newton method and BFGS, because using curvature in this situation can quickly drive the test function to become singular. This ill-posedness also motivates some regularization techniques, one of which is a stabilized version of WAN\cite{bertoluzza2023best}.

In the following section, we perform some numerical experiments to illustrate all the DNN-based methods from \eqref{ssec:pinn} to \eqref{ssec:wan}, and also do some comparisons among them.

\section{Numerical Experiments}\label{sec:experiments}
We split our numerical experiments into two sections, one is low dimension and another is high dimension. It is worth noting that, due to a lack of effective training strategy, the presented deep learning-based methods are still not competitive with classical methods. A detailed comparison between PINN and FEM can be found in \cite{grossmann2023can}. The results therein indicate that the strength of PINN lies in problems with 3 dimensions or higher. In the following experiments, our comparison will be limited to deep learning-based methods.
\subsection{Low dimension examples}
\begin{example}[\bf PINN in 2D]\label{example:pinn2d}
We consider the general elliptic equation \eqref{eqn:Poisson0} in the two dimensional domain $\Omega=(0,1)^2$, with the coefficient matrix $\mathcal{A} =
\begin{bmatrix}
  4 & -1 \\
  -1 & 3
\end{bmatrix}$, $\bm{\beta} = [1 \; 2]^T,$ $c=2$, and the known exact solution $u = \sin(\pi x_1) \sin(\pi x_2).$ The source $f$ has been computed as follows $f = -\textit{div}(\mathcal{A}\nabla u) + \bm{\beta} \cdot \nabla u + cu,$ and the boundary data $g = u \; \text{on}\; \partial \Omega$.
\end{example}

 We use a DNN architecture with 4 hidden layers and 30 neurons in each layer. When formulating the empirical loss $\widehat{\mathcal{L}}(u_{\theta})$, we take $n=4000$ points i.i.d. from $U(\Omega)$ for the PDE residual, and $m=1000$ points i.i.d. from $U(\partial \Omega)$, for boundary residual and boundary weight $\alpha = 1000$. These settings are chosen by trial and error.

 We minimize the loss $\widehat{\mathcal{L}}(u_{\theta})$ using ADAM provided by the PyTorch library \texttt{torch.optim} (version 2.0.0). We use 5000 ADAM iterations to minimize the loss function, the learning rate = $1e-4$. The exact solution and computed neural network solution and pointwise errors are shown in Figure \ref{fig:2D_Poisson} and loss history has been shown in Figure \ref{fig:2D_loss_hist}. We compute the $L^2-$ error, which is $0.025051$, and the $L^2-$ relative error is $5\%$, the $L^2-$ relative error is defined as follows: 
 \begin{align*}
     \norm{u-u_\theta}_{*} := \frac{\norm{u-u_\theta}_{L^2(\Omega)}}{\norm{u}_{L^2(\Omega)}}.
 \end{align*}
 \begin{figure}[h]
    \centering
    \setlength{\tabcolsep}{0pt}
    \begin{tabular}{ccc}
            \includegraphics[width=0.24\textwidth]{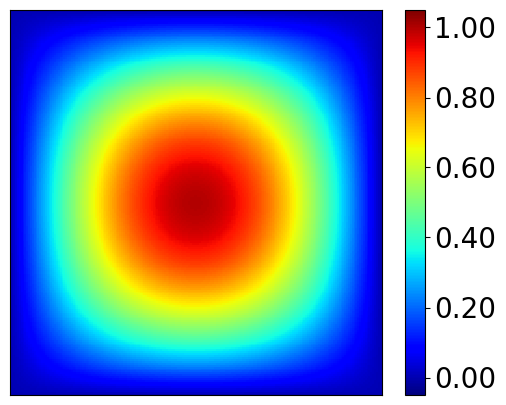}
            &\includegraphics[width=0.24\textwidth]{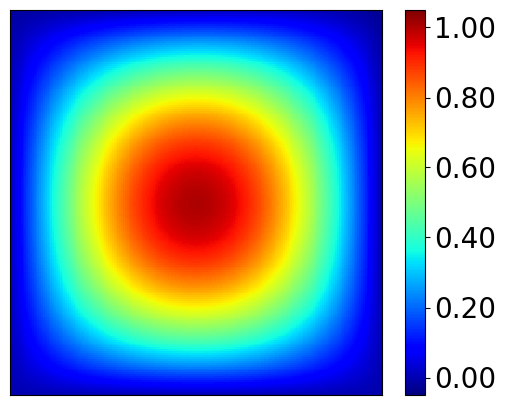}
            &\includegraphics[width=0.24\textwidth]{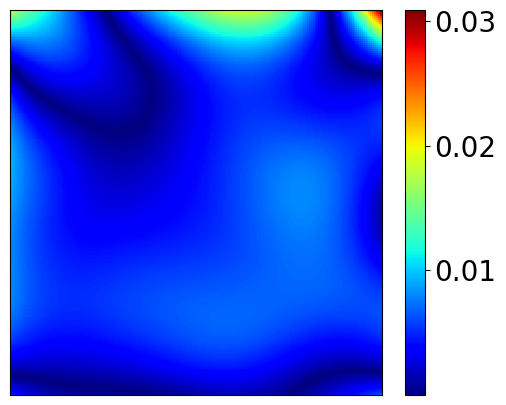}\\
            (a)  & (b) & (c) 
    \end{tabular}
    \caption{(a) Exact solution($u$) (b) Neural network approximate solution($u_{\theta}$) (c) pointwise error $|u_{\theta}(x)- u(x)|$} for Example \ref{example:pinn2d}
    \label{fig:2D_Poisson}
\end{figure}

\begin{figure}
    \centering
    \includegraphics[width=0.50\textwidth]{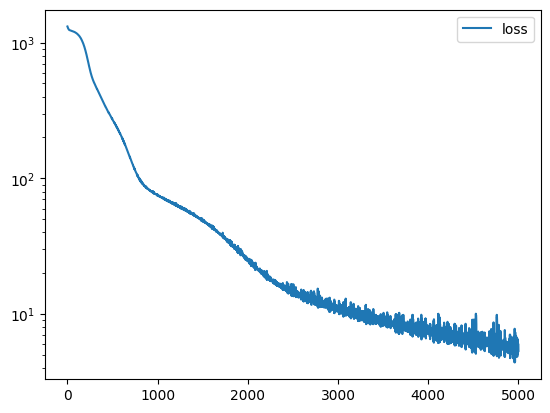}
    \caption{Loss history($\hat{\mathcal{L}}(u_{\theta})$), for Example \ref{example:pinn2d}}
    \label{fig:2D_loss_hist}
\end{figure}

Thus we see from Example \ref{example:pinn2d} that PINN works well for strong solutions, i.e., it approximates strong solutions very well. In the next example, We show that PINN is ineffective at approximating weak solutions, while DRM and VPINN are quite good at doing so.
\begin{example}[\bf DRM and VPINN in 2D]\label{example:Ldomain}
We consider the elliptic equation \eqref{eqn:Poisson0} in the L-shaped domain $\Omega=(0,1)^2\backslash [0,1)\times (-1,0]$ with the coefficient matrix $\mathcal{A} =
\begin{bmatrix}
  1 & 0 \\
  0 & 1
\end{bmatrix}$, $\bm{\beta} = [0 \; 0]^T,$ $c=0$. In radial coordinates, the source is given by $f(r,\theta)=\frac{2}{27}r^{\frac{-5}{3}}\sin(\frac{2}{3}\theta)-\frac{4}{9}r^{-1}\sin(\frac{2}{3}\theta)$. This PDE has a ground truth solution $y^*(r,\theta)=r^{\frac{2}{3}}\sin(\frac{2}{3}\theta)$.
\end{example}
For DRM, we use a DNN architecture with 4 hidden layers and 30 neurons in each layer. When formulating the empirical loss $\widehat{\mathcal{L}}(u_{\theta})$, we take $n=5000$ points i.i.d. from $U(\Omega)$ for the PDE residual, and $m=1000$ points i.i.d. from $U(\partial \Omega)$, for boundary residual and boundary weight $\alpha = 100$. These settings are chosen by trial and error. We minimize the loss $\widehat{\mathcal{L}}(u_{\theta})$ using ADAM provided by the PyTorch library. We use 5000 ADAM iterations to minimize the loss function, the learning rate = $1e-3$, and are multiplied by 0.1 after 3000 and 4000 iterations.  Moreover, we use the same architecture for PINN.

For VPINN, we take Delaunay triangulation with 2113 nodes and 4096 elements 
in total. The test functions are piecewise linear functions. We use another two pre-trained neural networks to enforce the boundary condition, each having 2 hidden layers with width 30. They are trained on the known boundary data by supervised learning until the error reaches 0.1\%.
For all the three methods, we employ ADAM to optimize the bias and weights in neural networks. The learning rate setting is the same as DRM. 

The results are displayed in Table \ref{fig:2dLshape}.
The PINN solution has relatively large errors spreading over the whole domain, 
while the errors of DRM and VPINN mostly concentrate on corner singularity. The $L^2$ relative errors for PINN, DRM and VPINN are 16.98\%, 4.73\% and 5.22\%, respectively.
Thus the $L^2$ errors of DRM and VPINN are both lower than PINN, indicating that a weak formulation can indeed fascillate neural networks to solve weaker solutions.

\begin{figure}[hbt!]
    \centering
    \begin{tabular}{cccc}
        \includegraphics[width=0.19\textwidth]{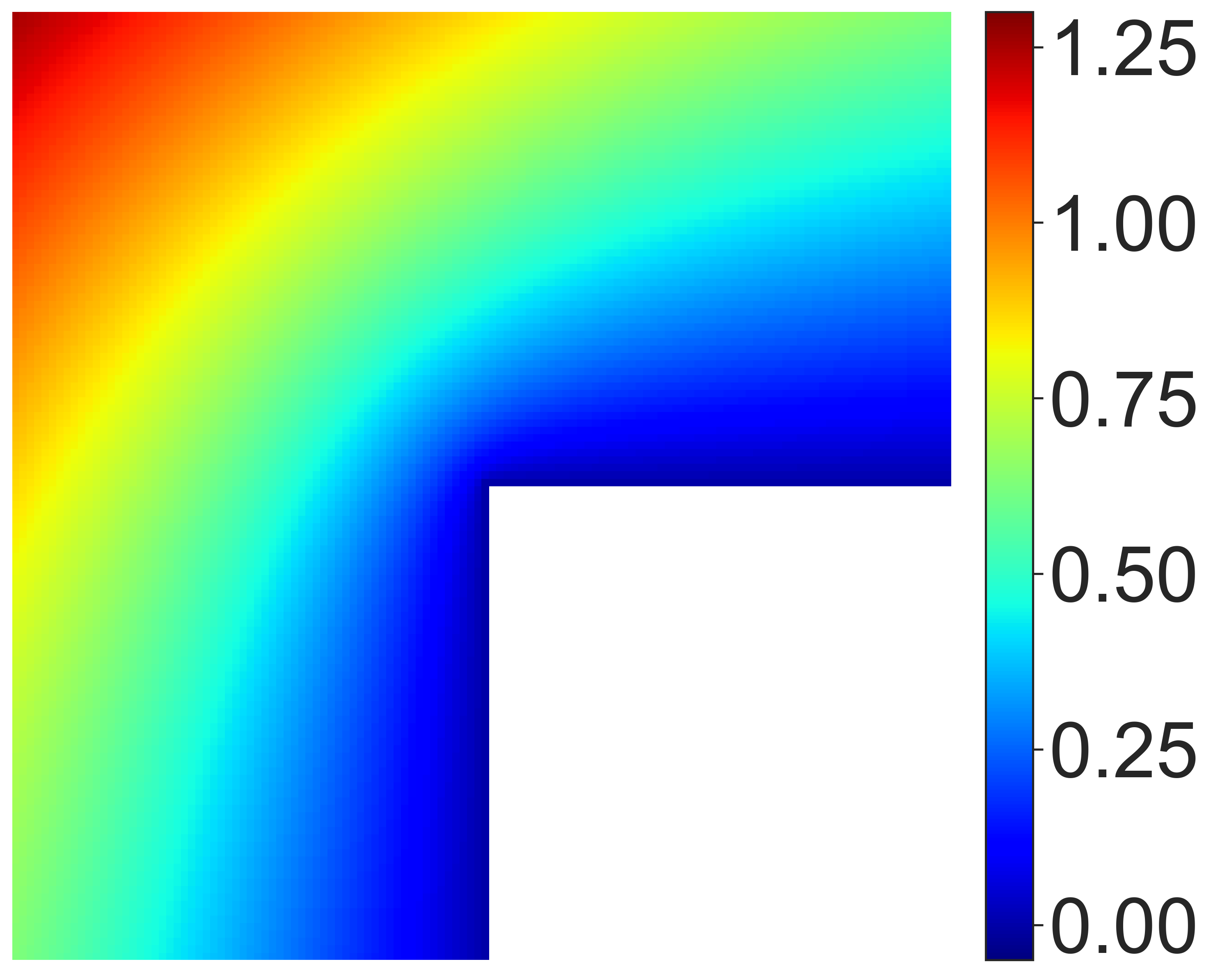} &
        \includegraphics[width=0.19\textwidth]{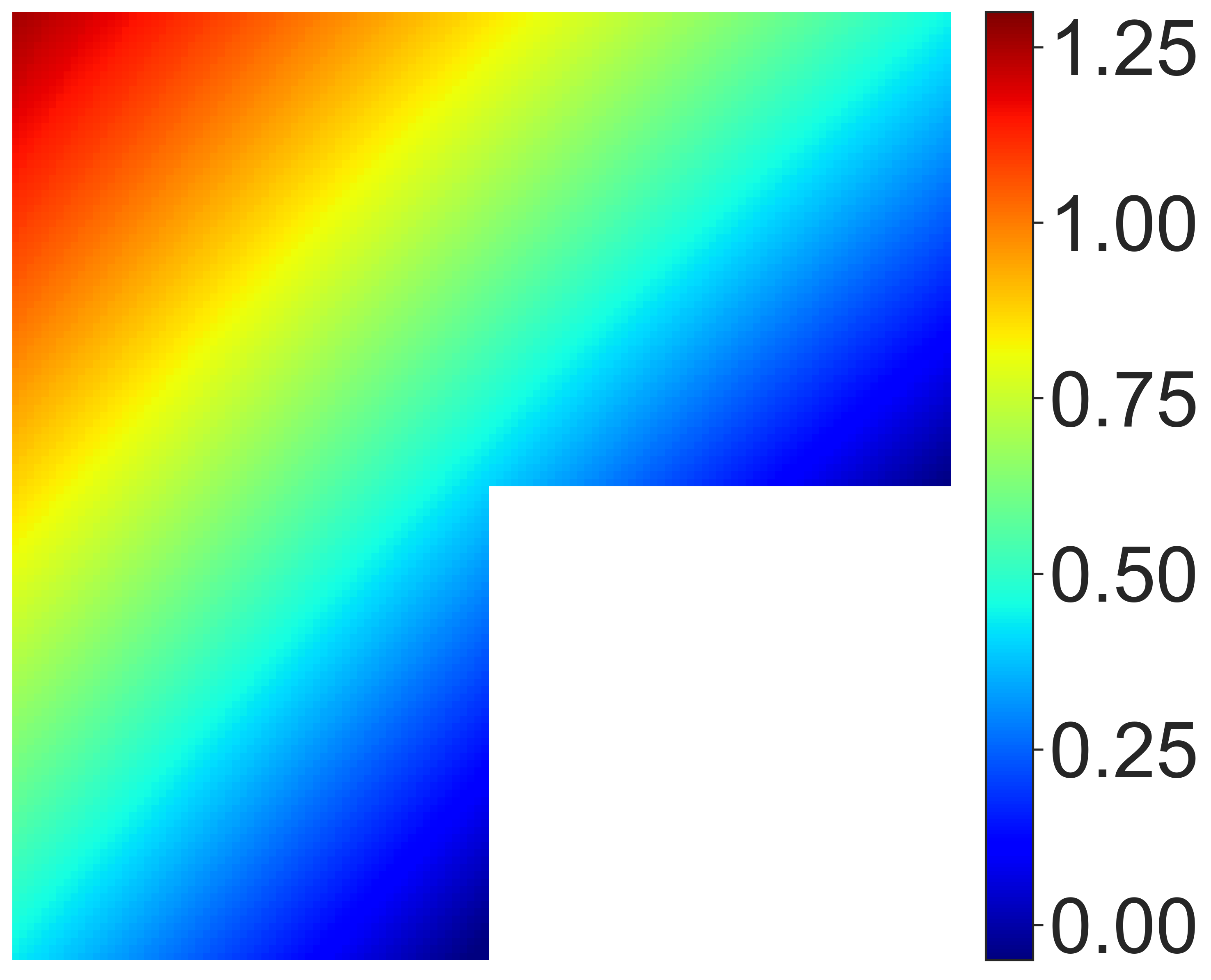} &
        \includegraphics[width=0.19\textwidth]{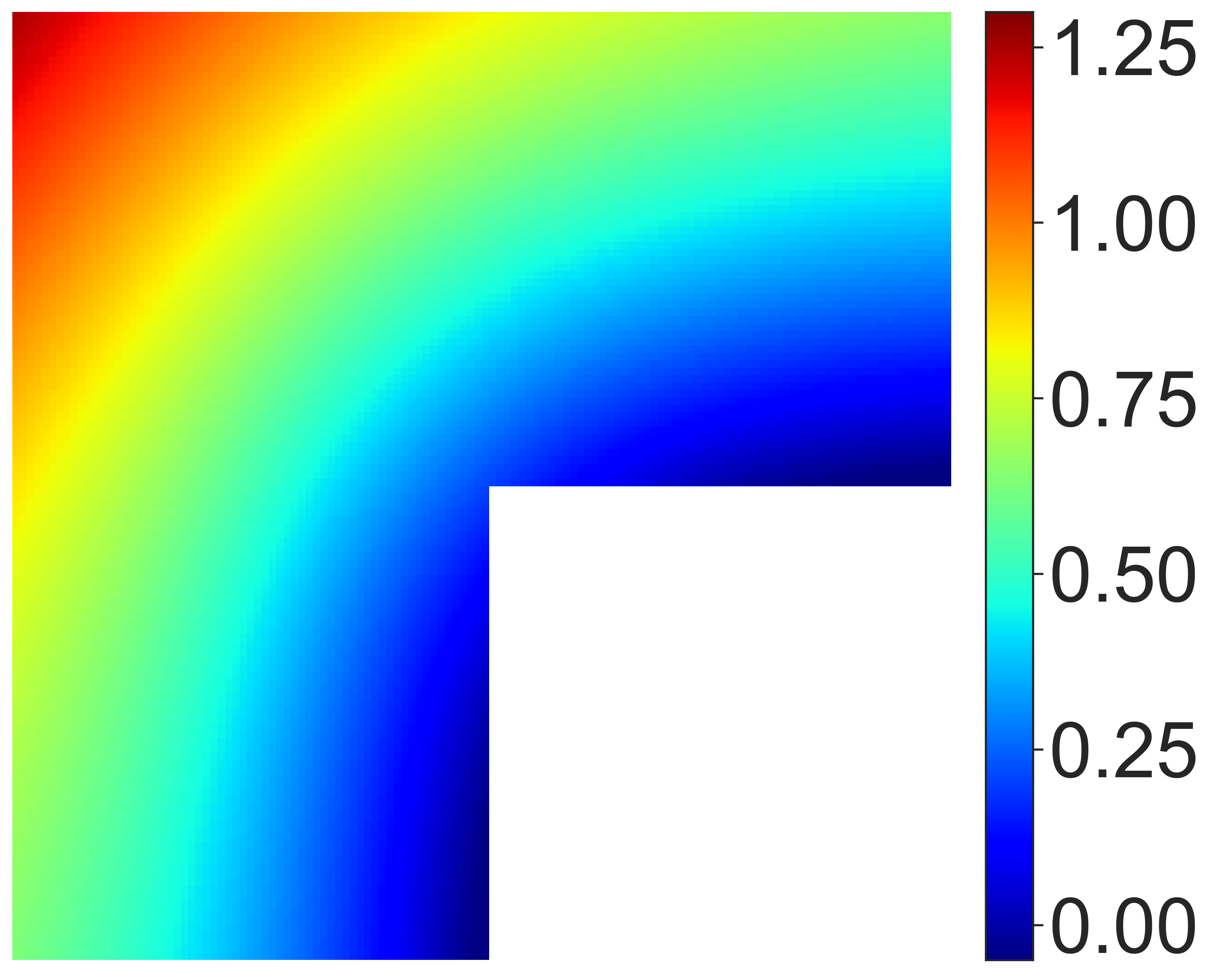} &
        \includegraphics[width=0.19\textwidth]{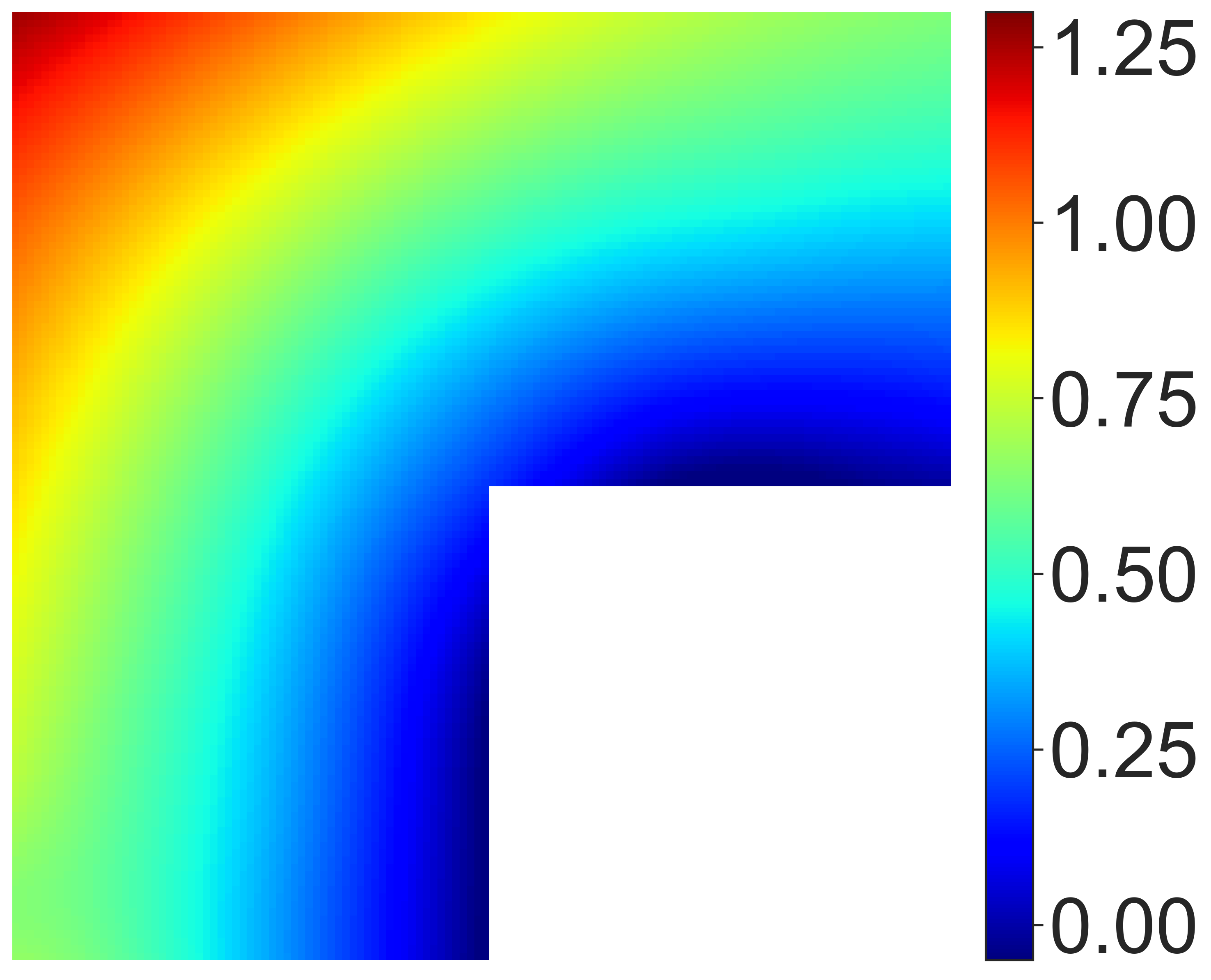} \\
         &
         \includegraphics[width=0.19\textwidth]{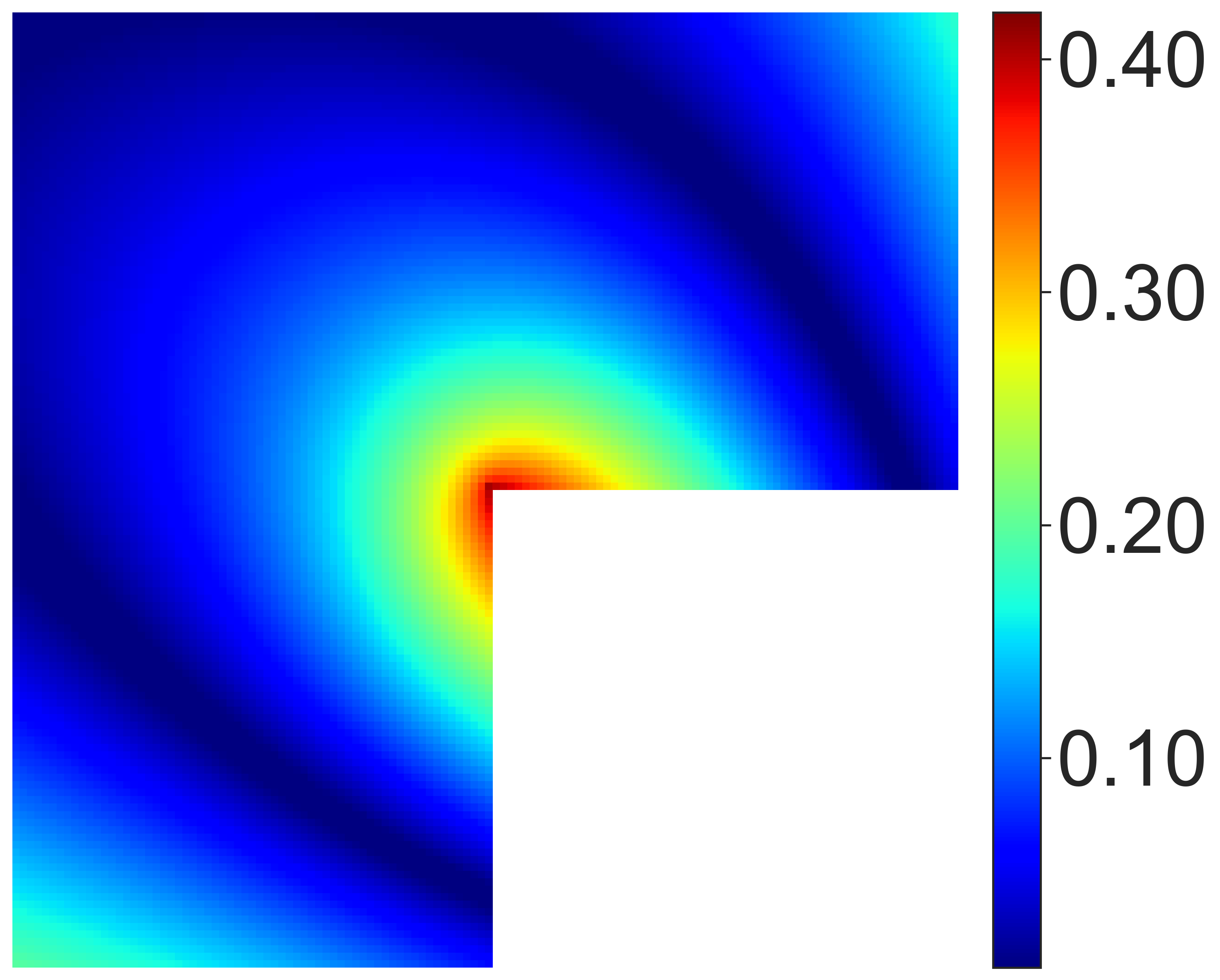} &
         \includegraphics[width=0.19\textwidth]{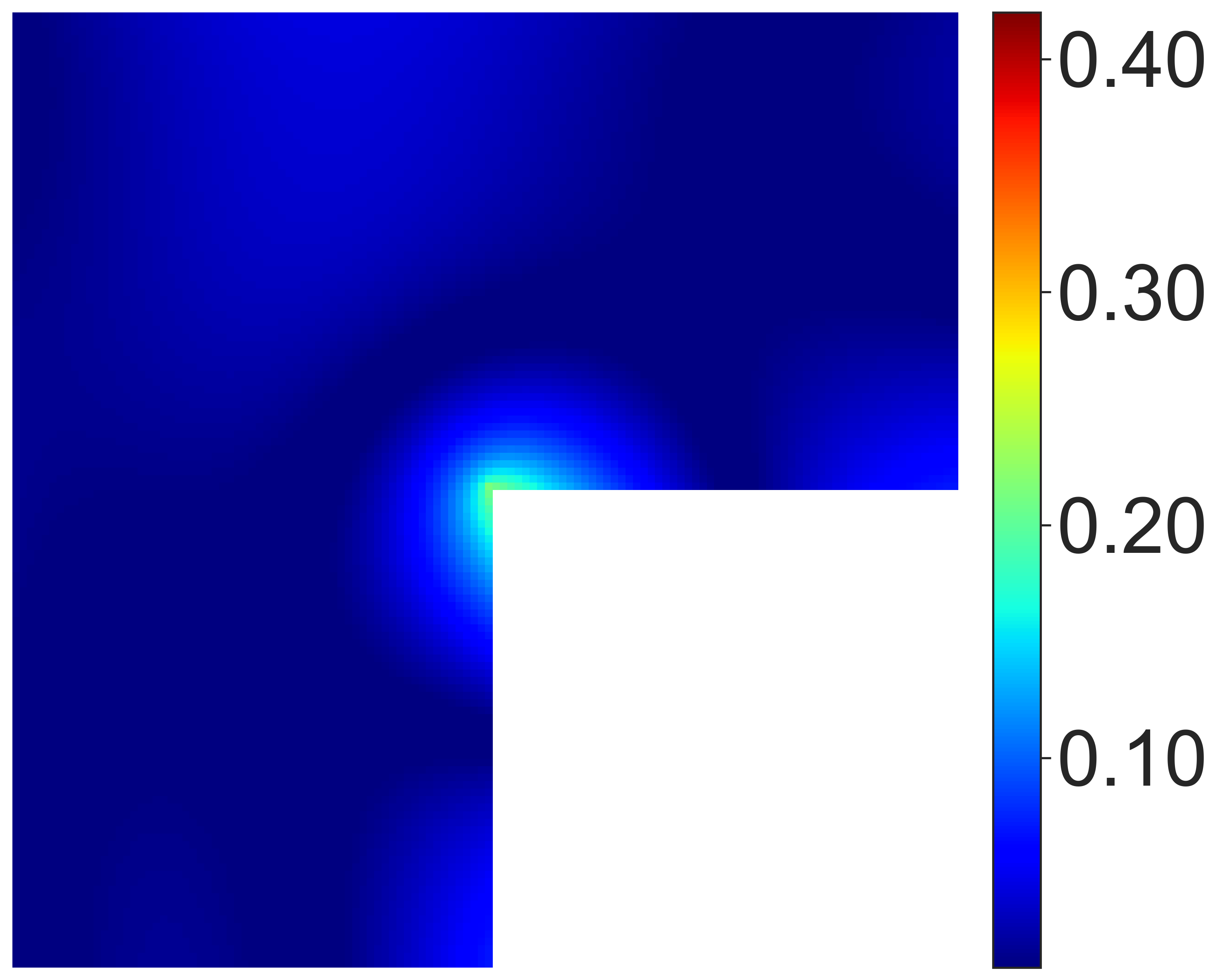} &
         \includegraphics[width=0.19\textwidth]{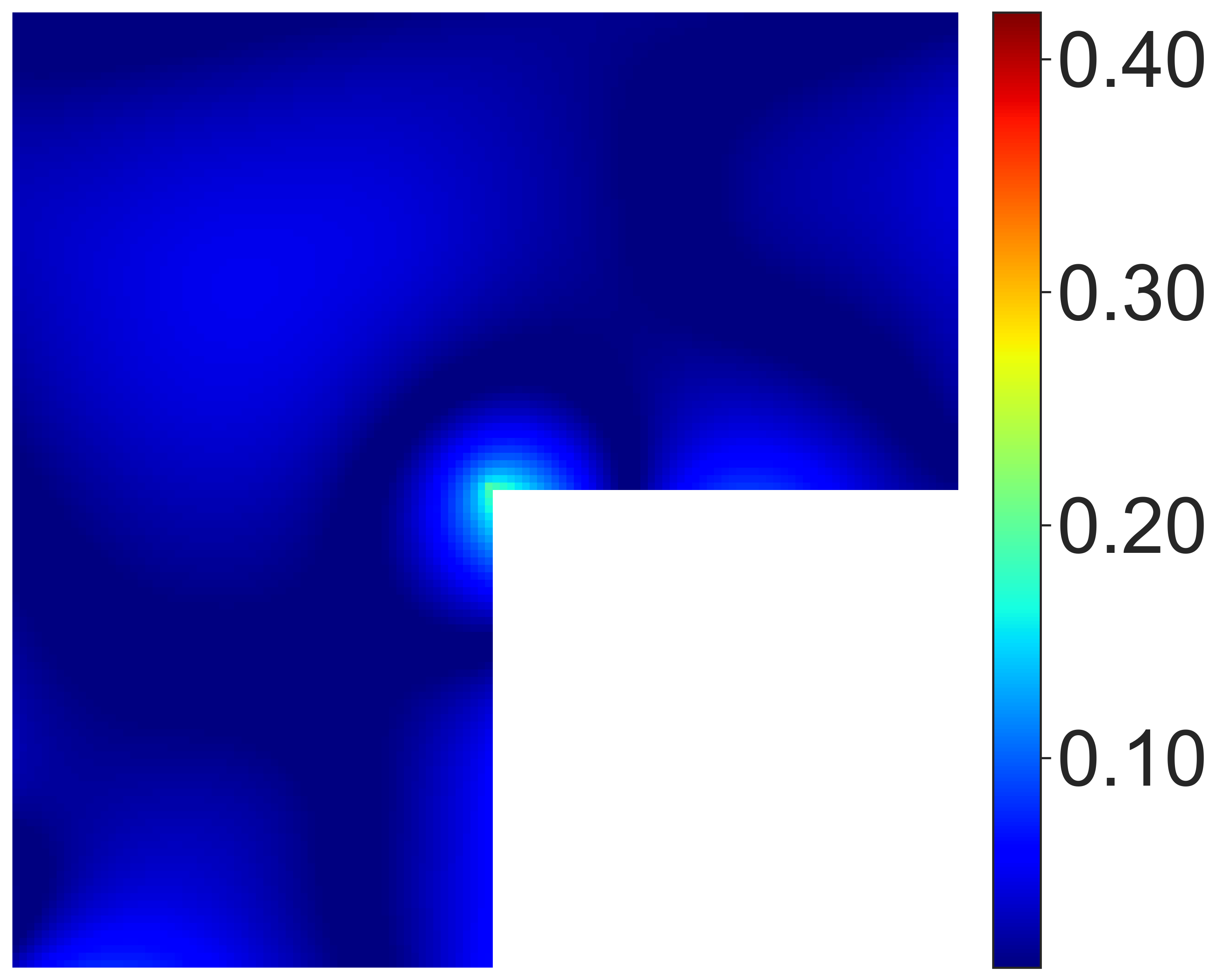}\\
         (a) Exact solution & (b) PINN  & (c) DRM  & (d) VPINN 
    \end{tabular}
    \caption{Approximate solutions in the top row and pointwise errors $|u_{\theta}(x)- u(x)|$ in the bottom row for Example \ref{example:Ldomain}.}
    \label{fig:2dLshape}
\end{figure}

In the next example, we show the performance of the method WAN. We compare its performance with all other methods listed in this paper.
\begin{example}[\bf WAN in 2D]\label{example:weak}
    We consider the Poisson problem with a weak solution. The domain $\Omega=(0,1)^2,$ with the coefficient matrix $\mathcal{A} = \mathbb{I}_{2 \times 2}$, $\bm{\beta} = [0 \; 0]^T,$ $c=0$ the source $f=2$, boundary data $g(x,y)=\min\{x^2,(1-x)^2\}$. This PDE has no strong solution\cite{zang2020weak}, but a weak solution exists: $y=\min\{x^2,(1-x)^2\}$.
\end{example}
For WAN, we employ AdaGrad\cite{duchi2011adaptive} to optimize both the solution network and the adversarial network. The learning rate is fixed to be $0.015$ for solution network and $0.04$ for test function. In each outer loop, solution network is trained by 3 iterations, and adversarial network is trained by 1 iteration. There are 5000 outer iterations in total. To compare the WAN with other methods, e.g., PINN, VPINN, and DRM we take the same training setting as in Example \ref{example:Ldomain}. For VPINN, we take 1264 nodes with 2574 elements. For PINN and DRM, we take 5000 points in the interior domain and 1000 points along the boundary.  

The results are displayed in Figure \ref{fig:weak}. The $L^2$ relative errors for PINN, DRM, VPINN, and WAN are 53.80\%, 37\%, 46\%, and 6.42\%, respectively. Among all these four methods, WAN yields the most accurate solution. DRM and VPINN is able to learn approximately the shape of solution, but the error is still large and concentrated in the center. The solution from PINN has a relatively large error spreading over domain and cannot approximate the correct shape of solution. Hence we further verify by this example that weak formulation-based methods can be beneficial for approximating solutions with low regularity. 

\begin{figure}[hbt!]
    \centering
    \begin{tabular}{ccccc}
        \includegraphics[width=0.16\textwidth]{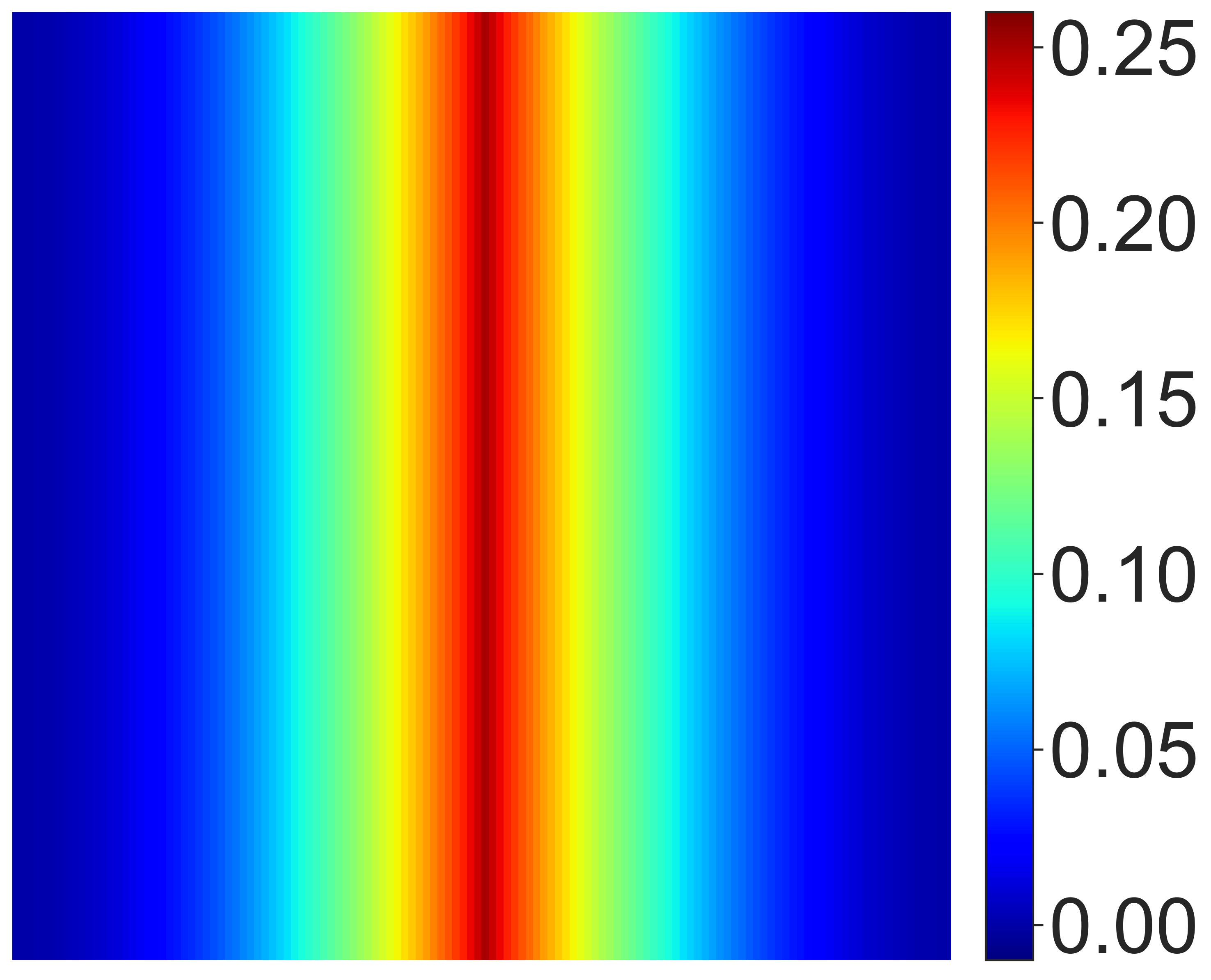} &
        \includegraphics[width=0.16\textwidth]{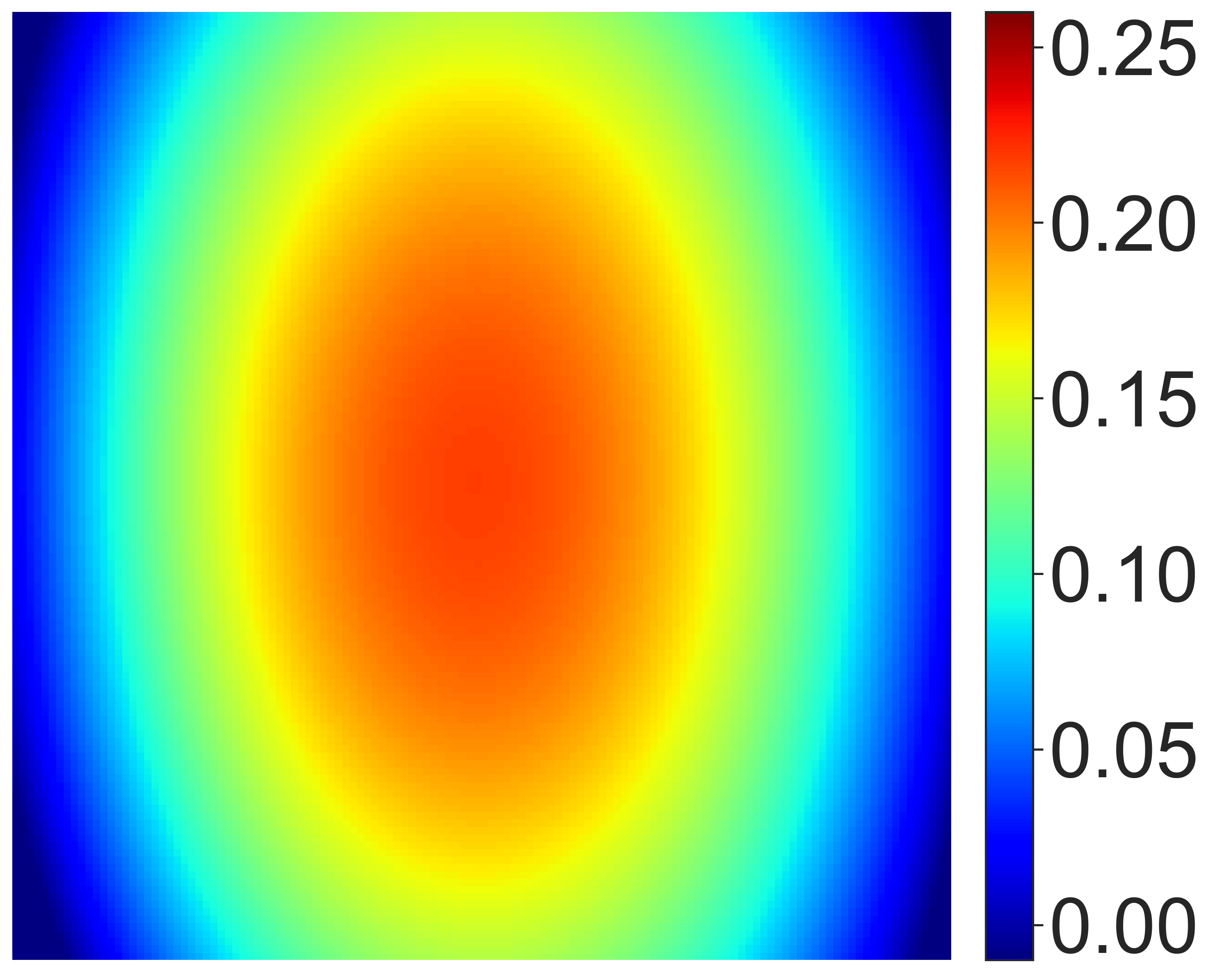} &
        \includegraphics[width=0.16\textwidth]{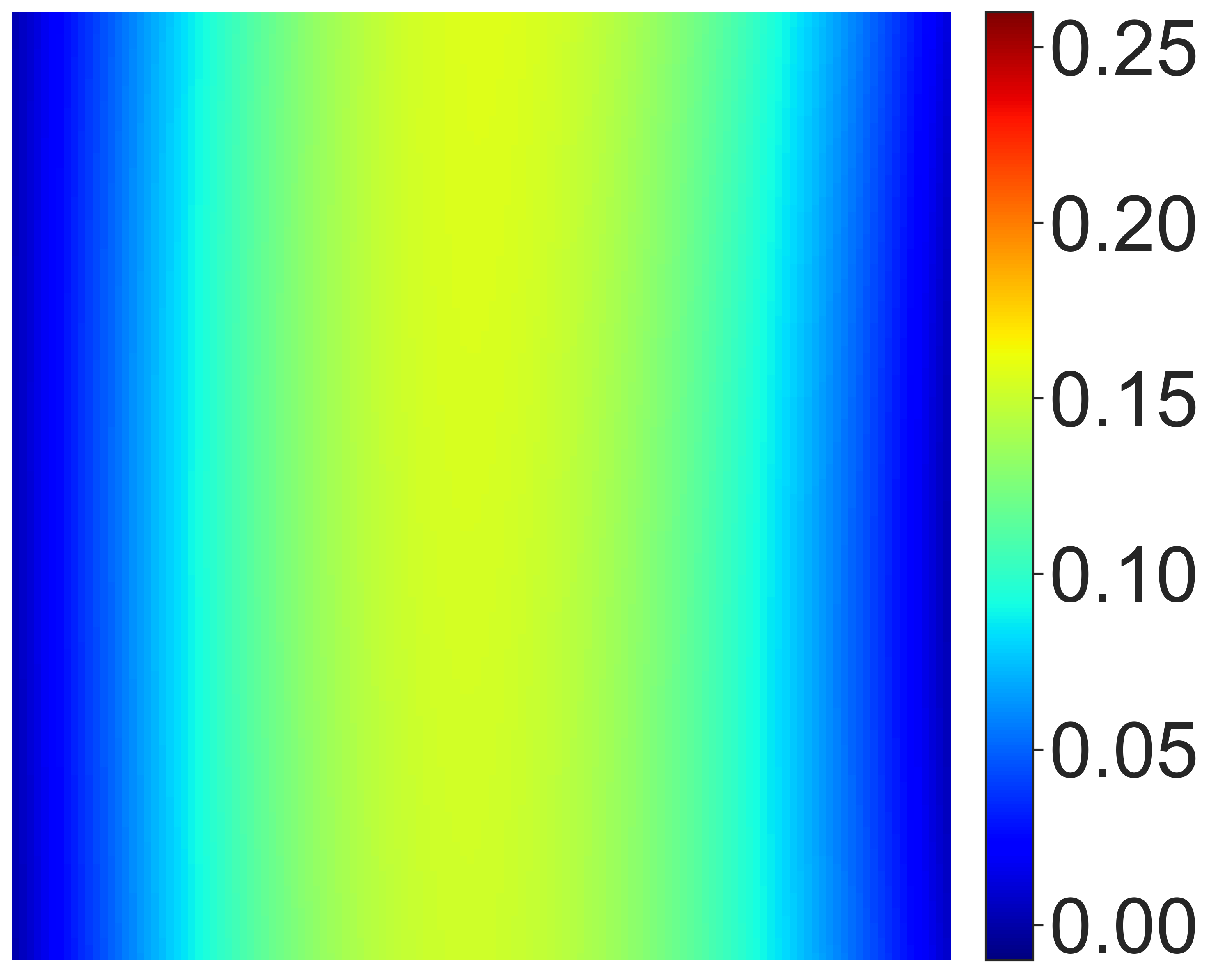} &
        \includegraphics[width=0.16\textwidth]{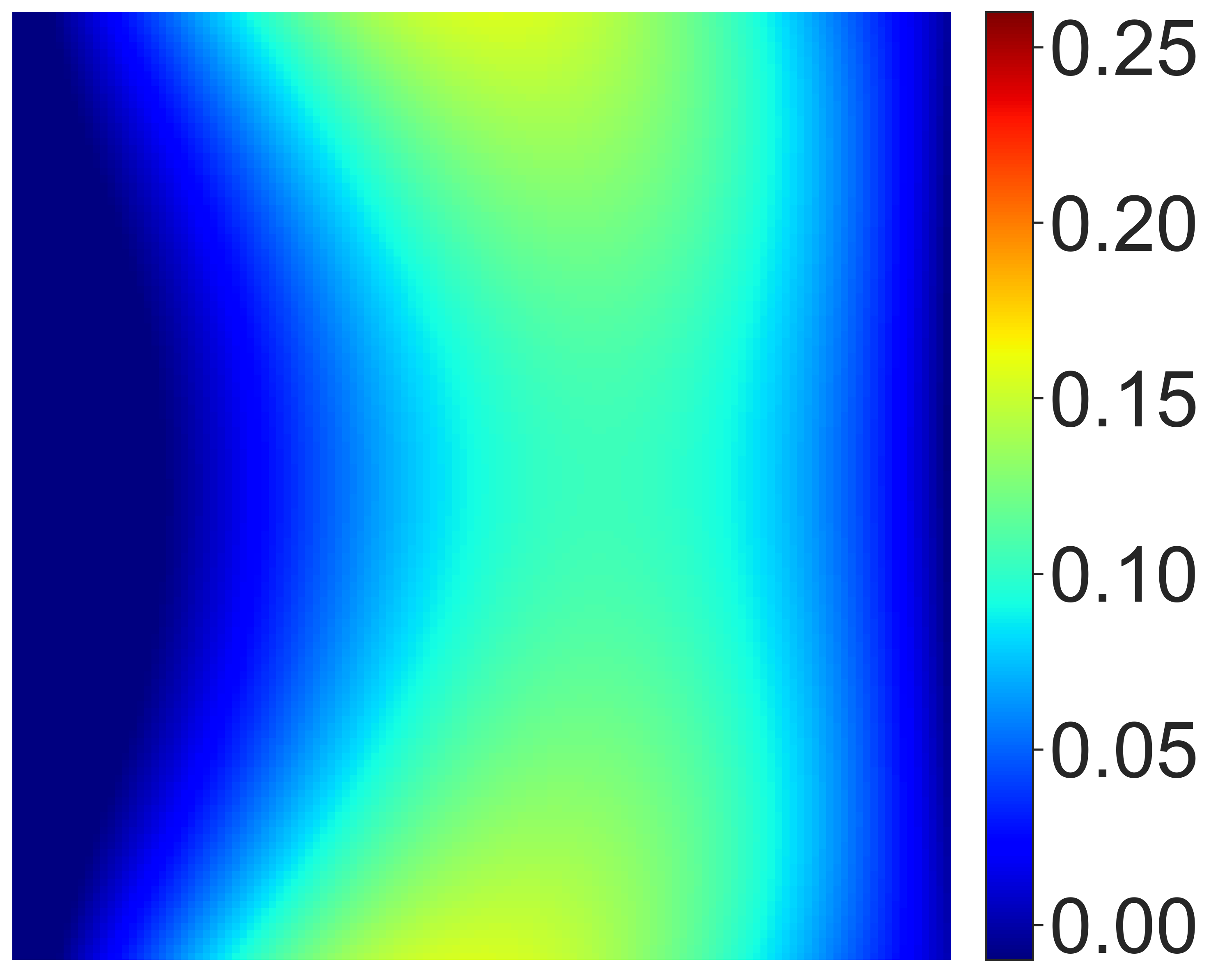} &
        \includegraphics[width=0.16\textwidth]{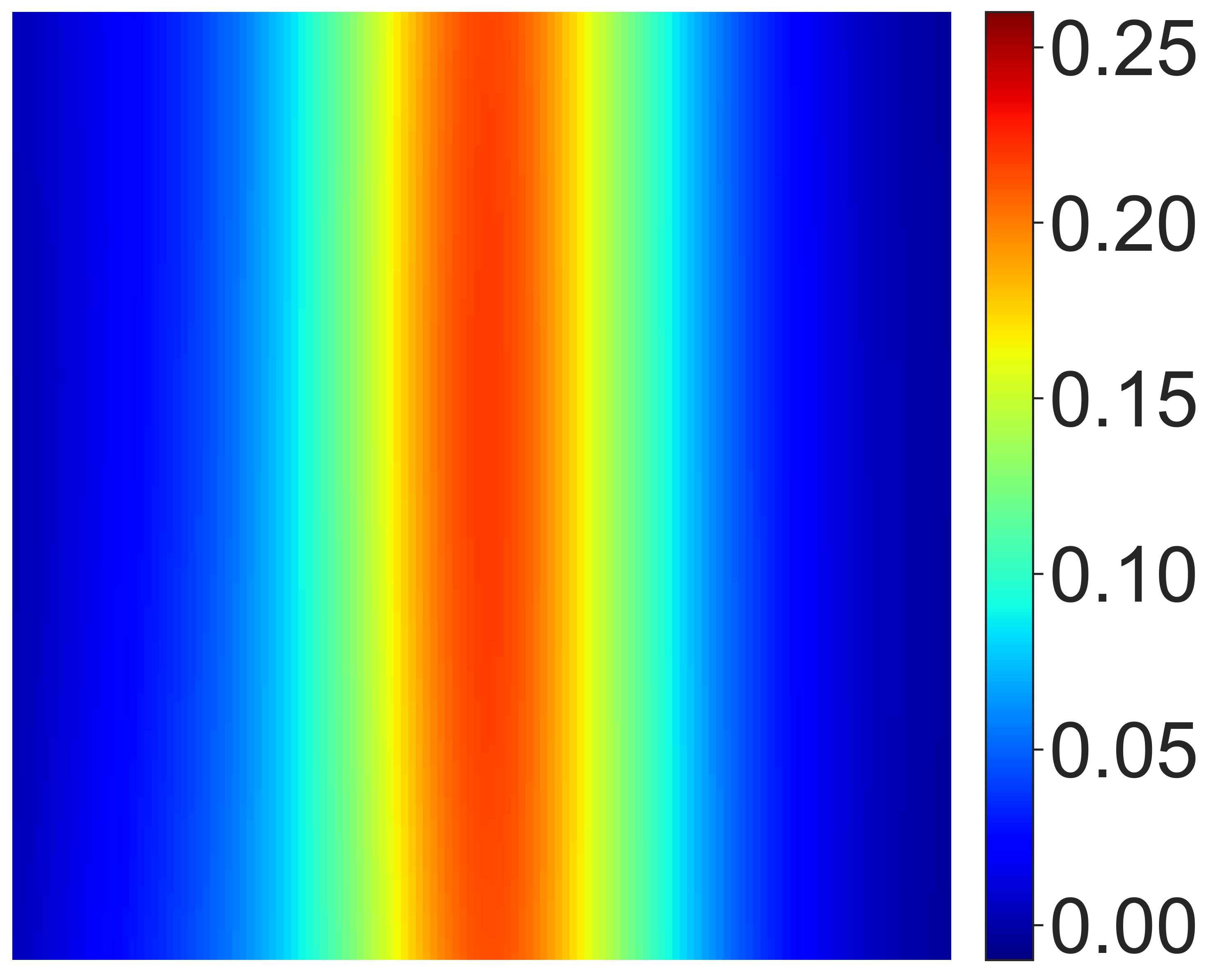} 
        \\
         &
         \includegraphics[width=0.16\textwidth]{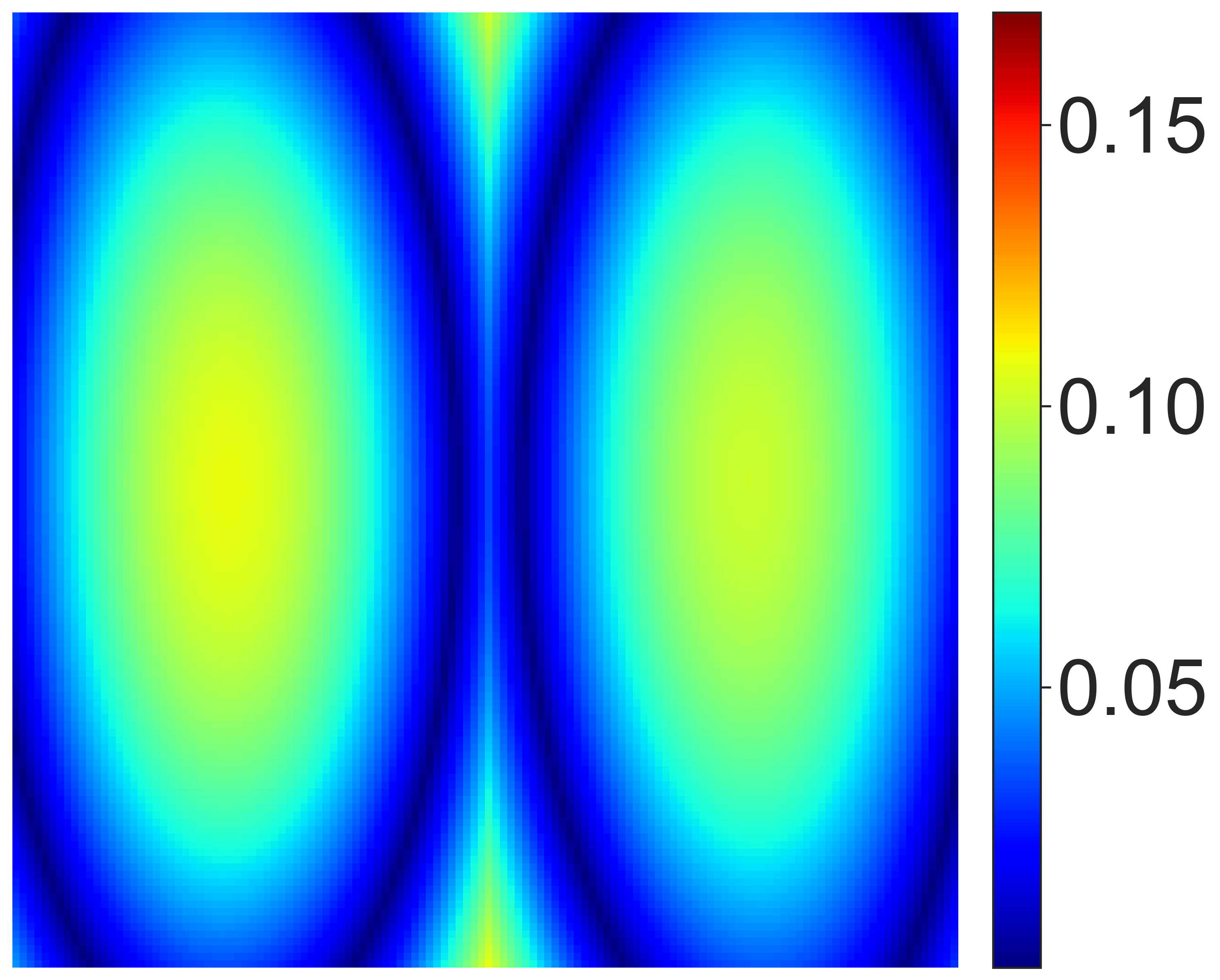} &
         \includegraphics[width=0.16\textwidth]{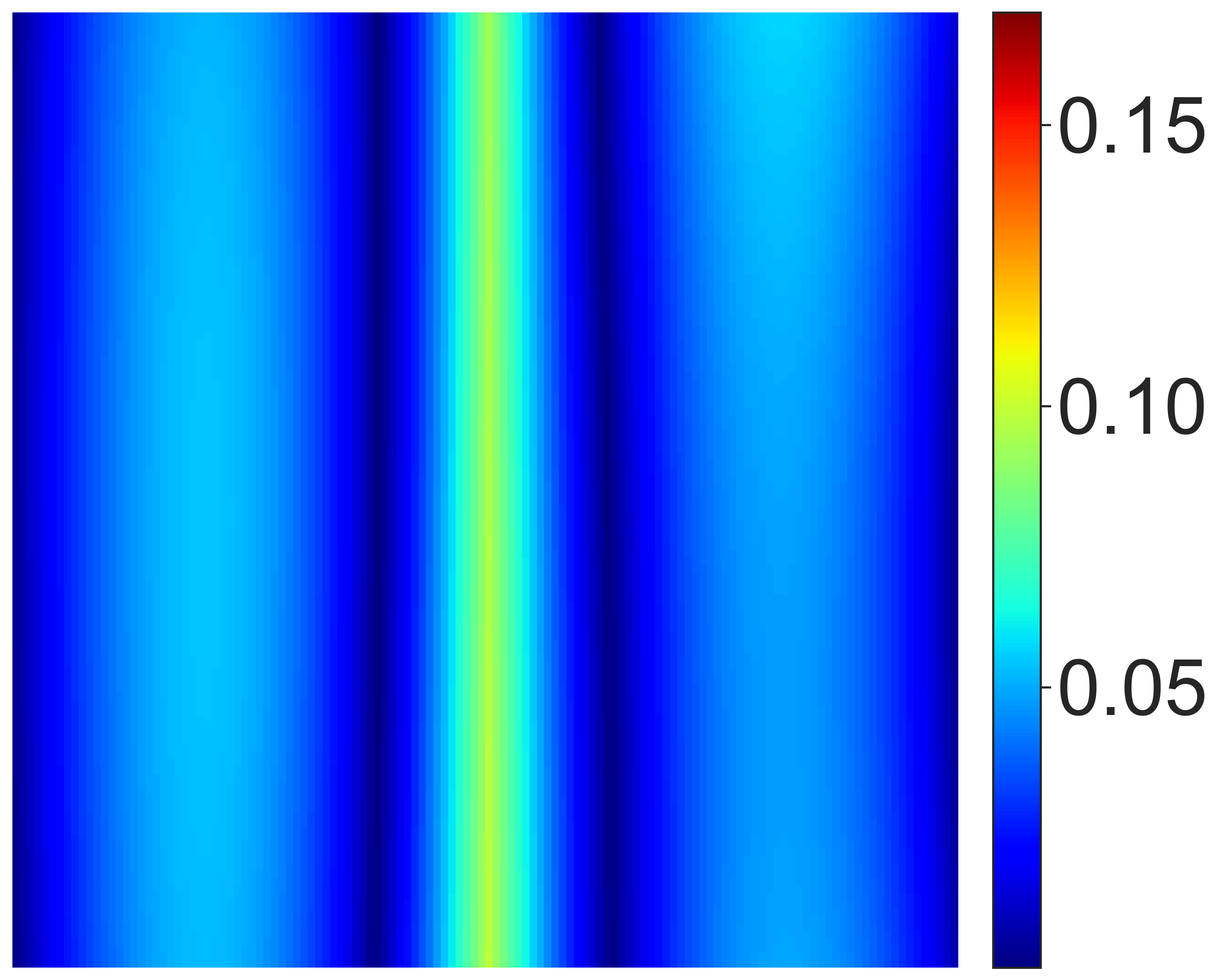} &
         \includegraphics[width=0.16\textwidth]{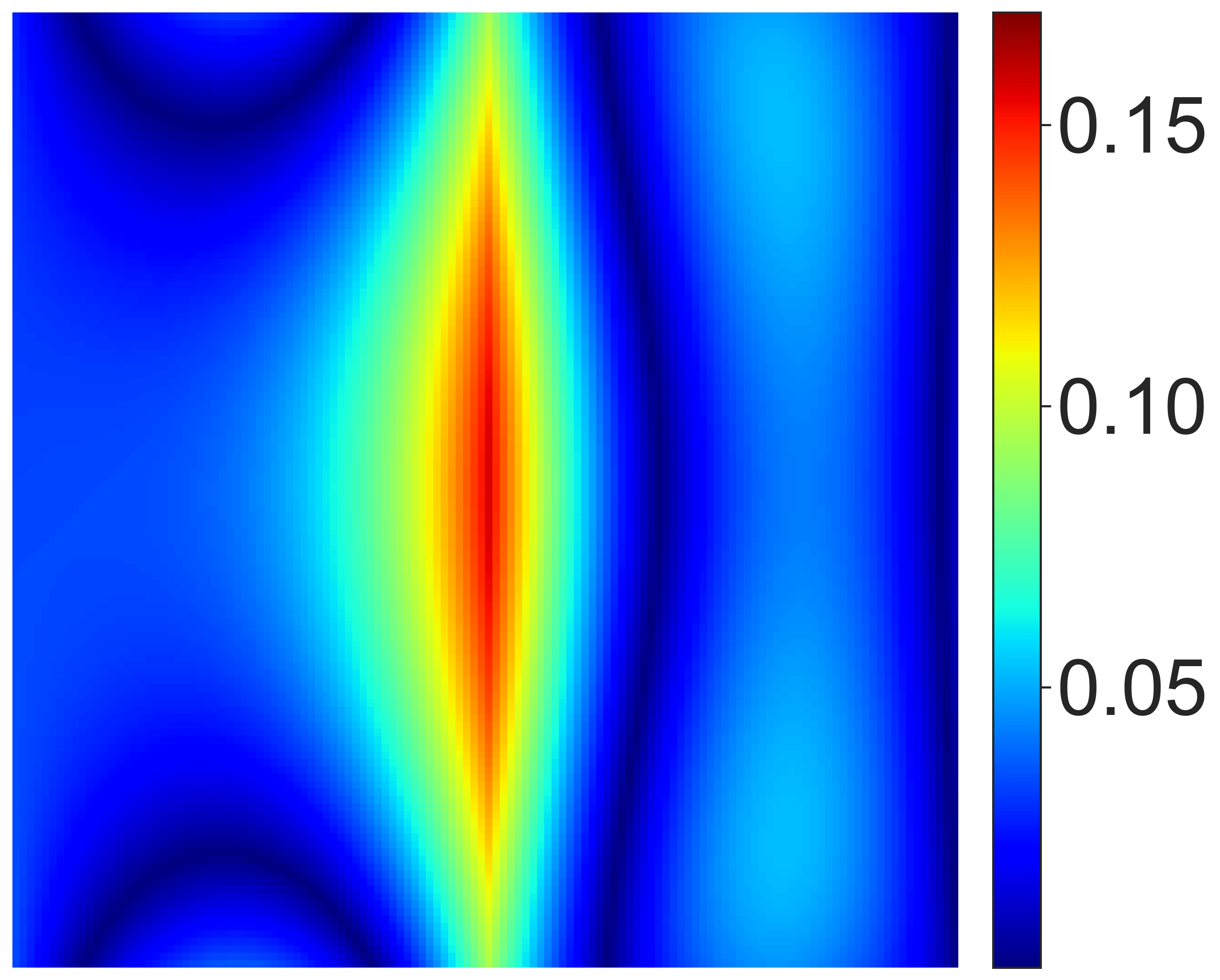} & 
         \includegraphics[width=0.16\textwidth]{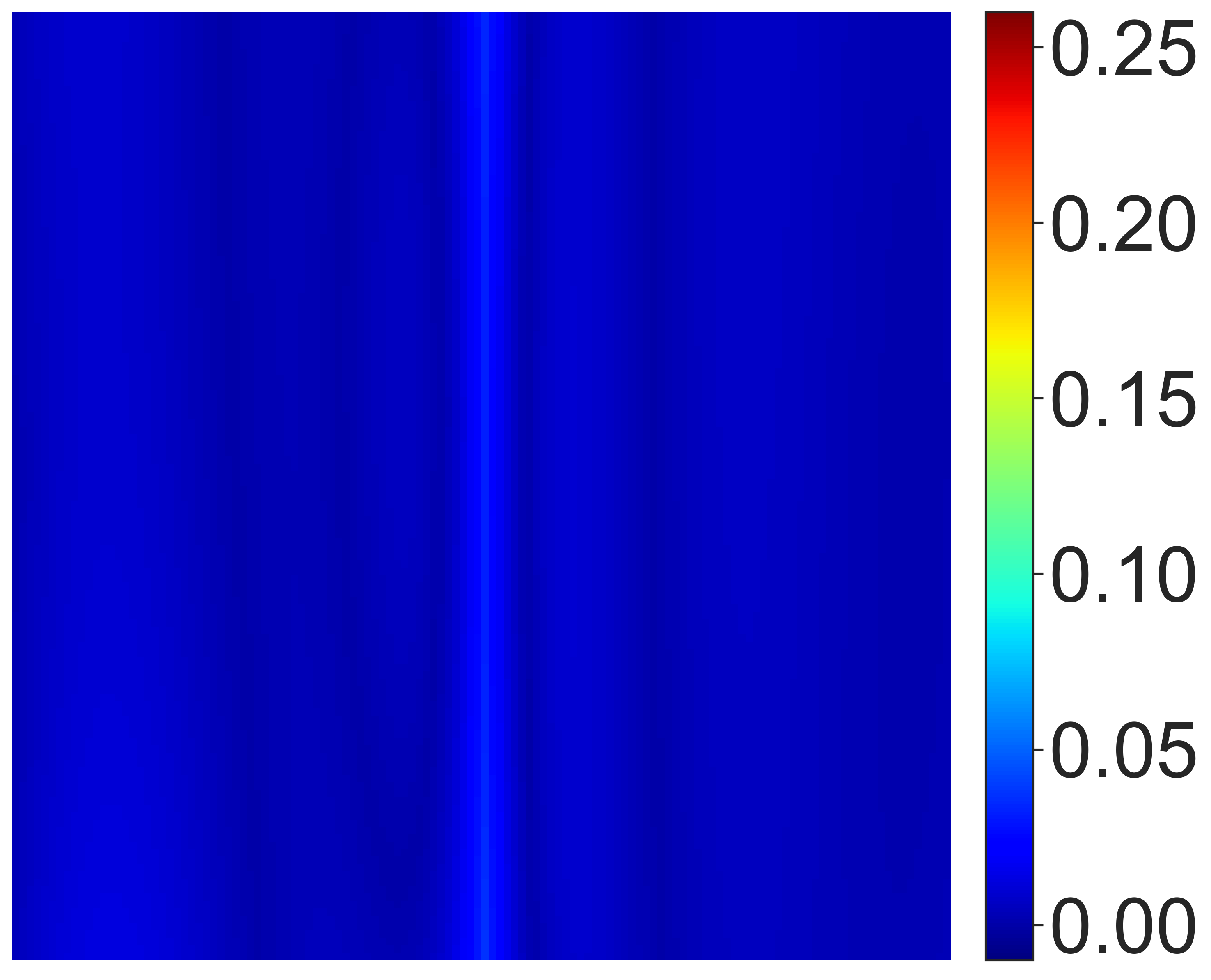}\\
(a) Exact solution & (b) PINN  & (c) DRM  & (d) VPINN & (e) WAN         
    \end{tabular}
    \caption{Approximate solutions in the top row and pointwise errors $|u_{\theta}(x)- u(x)|$ in the bottom row for Example \ref{example:weak}.}
    \label{fig:weak}
\end{figure}

\paragraph{Training instability of WAN} In particular, from experiments we observe that the training of WAN is very sensitive to the choice of hyperparameters, including a careful choice of iteration numbers. To demonstrate this phenomenon, three comparable tests are performed and the $L^2$ validation error is shown in Figure \ref{fig:wan_conv}. Data setting is same as in Example \ref{example:weak}, and maximum number of outer iteration is 5000. The following settings are taken differently:
\begin{enumerate}
    \item Test 1: Optimize by AdaGrad. The fixed learning rate is 0.015 for $u_\theta$, and 0.04 for $\phi_\eta$. In each outer iteration, $u_\theta$ is trained for 3 iterations, and $\phi_\eta$ for 1 iteration. This setting is adopted from the reference paper \cite{zang2020weak}.
    \item Test 2: Optimize by Adam, with fixed learning rate $10^{-4}$ for solution $u_\theta$ and $10^{-3}$ for test function $\phi_\eta$. In each outer iteration, $u_\theta$ is trained for 3 iterations, and $\phi_\eta$ for 1 iteration. 
    \item Test 3: Optimize by Adam. The fixed learning rate is $10^{-4}$ for $u_\theta$ and $10^{-3}$ for $\phi_\eta$. In each outer iteration, $u_\theta$ is trained for 6 iterations, and $\phi_\eta$ for 2 iteration.
\end{enumerate}

\begin{figure}[hbt!]
    \centering
    \includegraphics[width=0.5\textwidth]{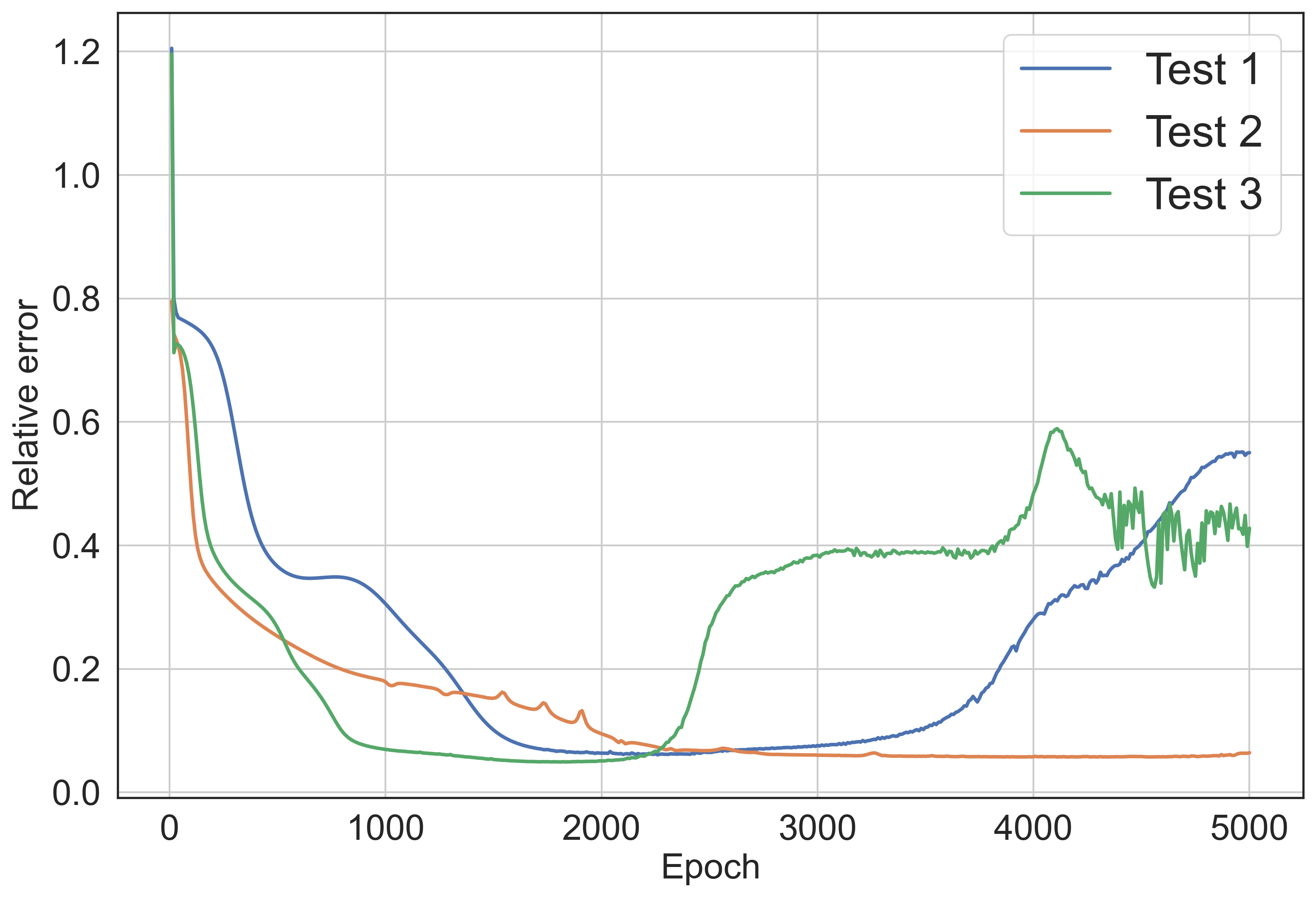}
    \caption{The validation error history of for three WAN tests. }
    \label{fig:wan_conv}
\end{figure}

In test 1 and 3, the validation error of approximated solution $u_\theta$ first decreases and then increase. Test 3 increases number of iterations in each outer iteration(epoch), and the worsening of solution becomes faster. This means that a proper early stopping rule is required in order to obtain an accurate solution before it goes worsening. This phenomena can be explained by the ill-posedness of the optimization problem of WAN. For such problems one typically need to do early stopping and adopt a carefully designed stepsize rule to yield reasonable result. In test 2, although the solution is still worsening, but the process is much slower. 

Concluding from the results and the observation above, WAN is able to approximate weaker solutions and can beat all other NN-based methods in some examples, but the training difficulty prevents it from efficient application. Stablizing tricks for WAN will hugely improve the practice.



\subsection{High dimension examples}
\begin{example}[\bf PINN]\label{example:pinn4d}
We consider the general elliptic equation \eqref{eqn:Poisson} in the four dimensional domain $\Omega=(0,1)^4$, with the coefficient matrix $\mathcal{A} = \mathbb{I}_{4\times 4}$, $\bm{\beta} = [0 \; 0\; 0 \; 0]^T,$ $c=0$, the source: $$f = 4\pi^2 \sin(\pi x_1) \sin(\pi x_2) \sin(\pi x_3) \sin(\pi x_4),$$ boundary condition $g=0$ and known exact solution $u = \sin(\pi x_1) \sin(\pi x_2) \sin(\pi x_3) \sin(\pi x_4).$
\end{example}

 We use a DNN architecture with 4 hidden layers and 80 neurons in each layer. When formulating the empirical loss $\widehat{\mathcal{L}}(u_{\theta})$, we take $n=20000$ points i.i.d. from $U(\Omega)$ for the PDE residual, and $m=5000$ points i.i.d. from $U(\partial \Omega)$, for boundary residual and boundary weight $\alpha = 200$. We minimize the loss $\widehat{\mathcal{L}}(u_{\theta})$ using ADAM provided by the PyTorch library \texttt{torch.optim} (version 2.0.0). We use 20000 ADAM iterations to minimize the loss function, the learning rate = $1e-4$. The exact solution and computed neural network solution and pointwise errors are shown in Figure \ref{fig:4D_Poisson} and loss history has been shown in Figure \ref{fig:4D_loss_hist}.
 \begin{figure}[h]
    \centering
    \setlength{\tabcolsep}{0pt}
    \begin{tabular}{ccc}
            \includegraphics[width=0.24\textwidth]{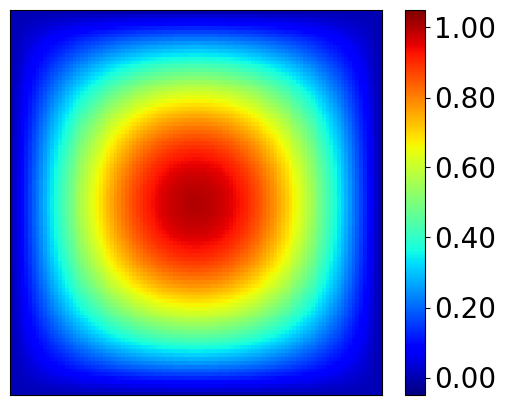}
            &\includegraphics[width=0.24\textwidth]{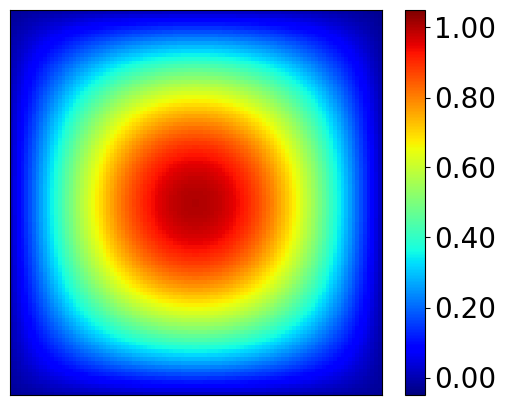}
            &\includegraphics[width=0.24\textwidth]{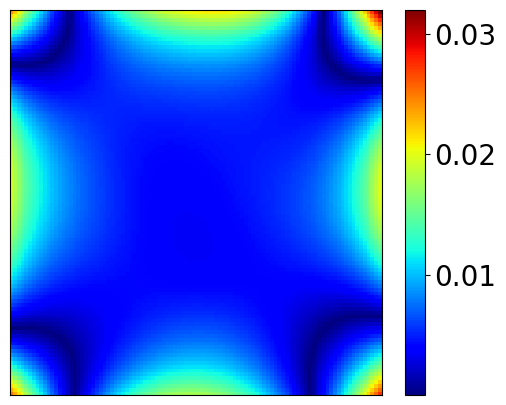}\\
            (a)  & (b) & (c) 
    \end{tabular}
    \caption{(a) Exact solution($u$), cross section at $x_3=x_4=0.5$ (b) Neural network approximate solution($u_{\theta}$), cross section at $x_3=x_4=0.5$ (c) pointwise error $|u_{\theta}(x)- u(x)|$, for Example \ref{example:pinn4d}}
    \label{fig:4D_Poisson}
\end{figure}

\begin{figure}
    \centering
    \includegraphics[width=0.50\textwidth]{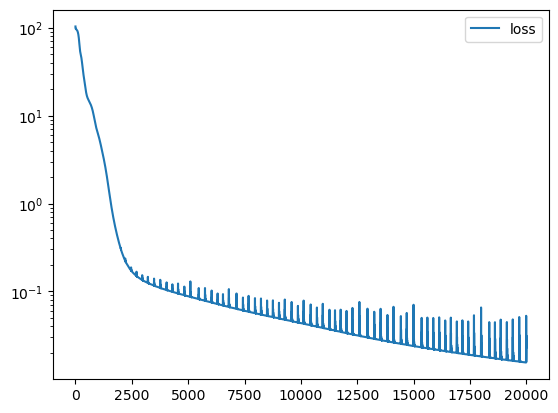}
    \caption{Loss history($\hat{\mathcal{L}}(u_{\theta})$), for Example \ref{example:pinn4d}}
    \label{fig:4D_loss_hist}
\end{figure}

\begin{example}\label{example:6dpoisson}
    We consider the general elliptic equation \eqref{eqn:Poisson} in a higher dimension $d=6$. The domain $\Omega=[0,1]^6$, coefficient matrix $\mathcal{A} = \mathbb{I}_{6\times 6}$, $\bm{\beta} = [0 \; 0\; 0 \; 0\; 0\; 0]^T,$ $c=0$, the source $f = 0,$ $g(x_1,\ldots,x_6)=x_1x_2+x_3x_4+x_5x_6$. The known exact solution $u = x_1x_2+x_3x_4+x_5x_6.$
\end{example}

We test and compare the performance of PINN and DRM in this example. We employ the same neural network architecture as in Example \ref{example:pinn4d}. The domain data of size $n=10000$ are uniformly sampled from domain, and boundary data of size $m=2000$ uniformly from boundary. For both methods, the boundary weight is taken to be $100$, selected by trial and error. We use 10000 ADAM iterations to optimze the NN parameters, with initial learning rate $10^{-3}$, multiplied by 0.1 at 5000-th and 7000-th iteration. For WAN, the boundary weight is $10^{6}$.  We use AdaGrad\cite{duchi2011adaptive} with learning rate 0.015 and 0.04 for solution NN and test function, respectively. We record the dynamics of loss value and early-stop the training at 2800-th outer iteration, before the loss value starts to rise. In each outer iteration, solution NN is trained for 3 iterations and test function for 1 iteration. The figures are displayed in \ref{fig:6dpoisson}. The $L^2$ relative errors for PINN, DRM and WAN are 
$0.55\%, 1.45\%$ and $6.59\%$, respectively.

\begin{figure}[hbt!]
    \centering
    \begin{tabular}{cccc}
        \includegraphics[width=0.19\textwidth]{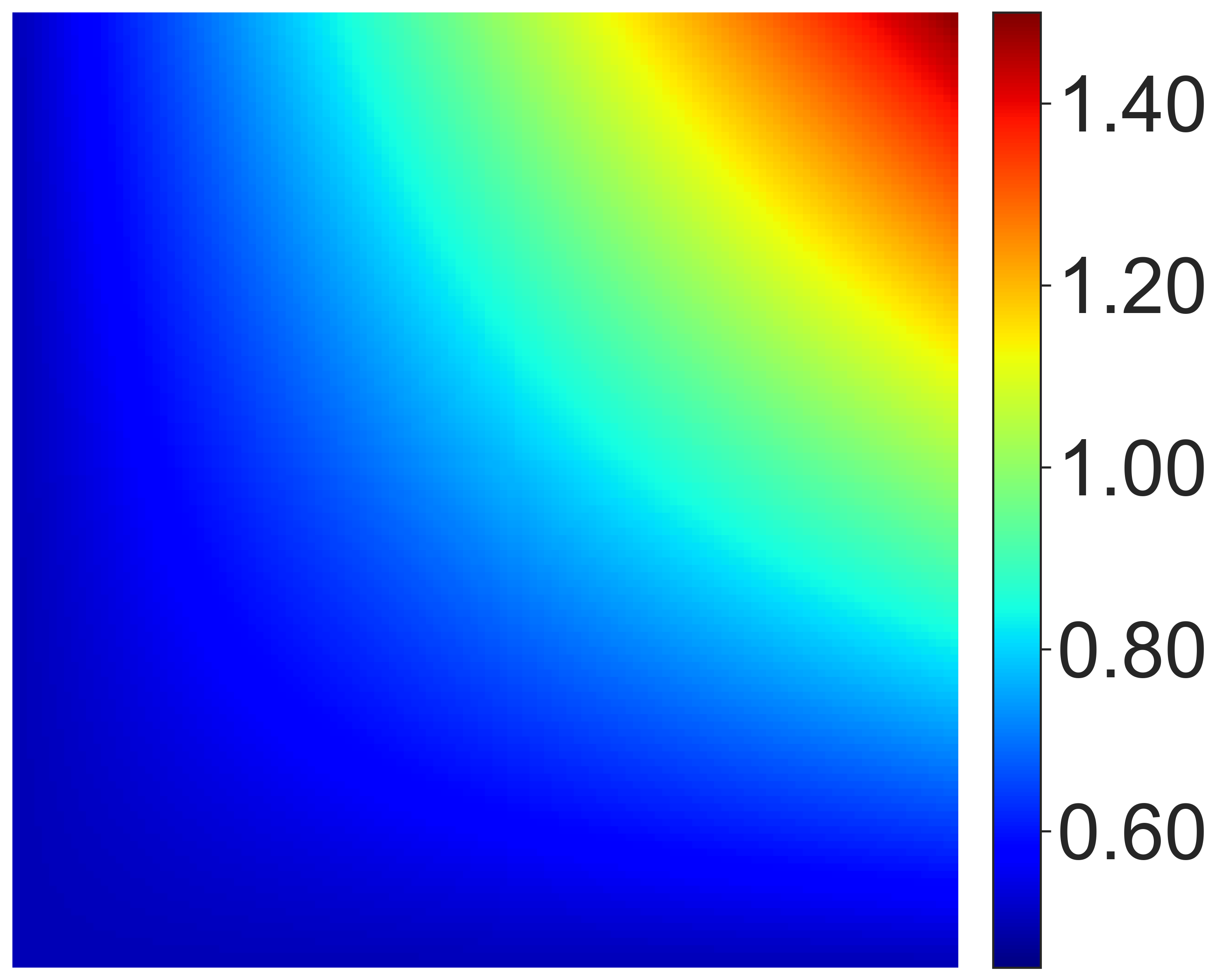} &
        \includegraphics[width=0.19\textwidth]{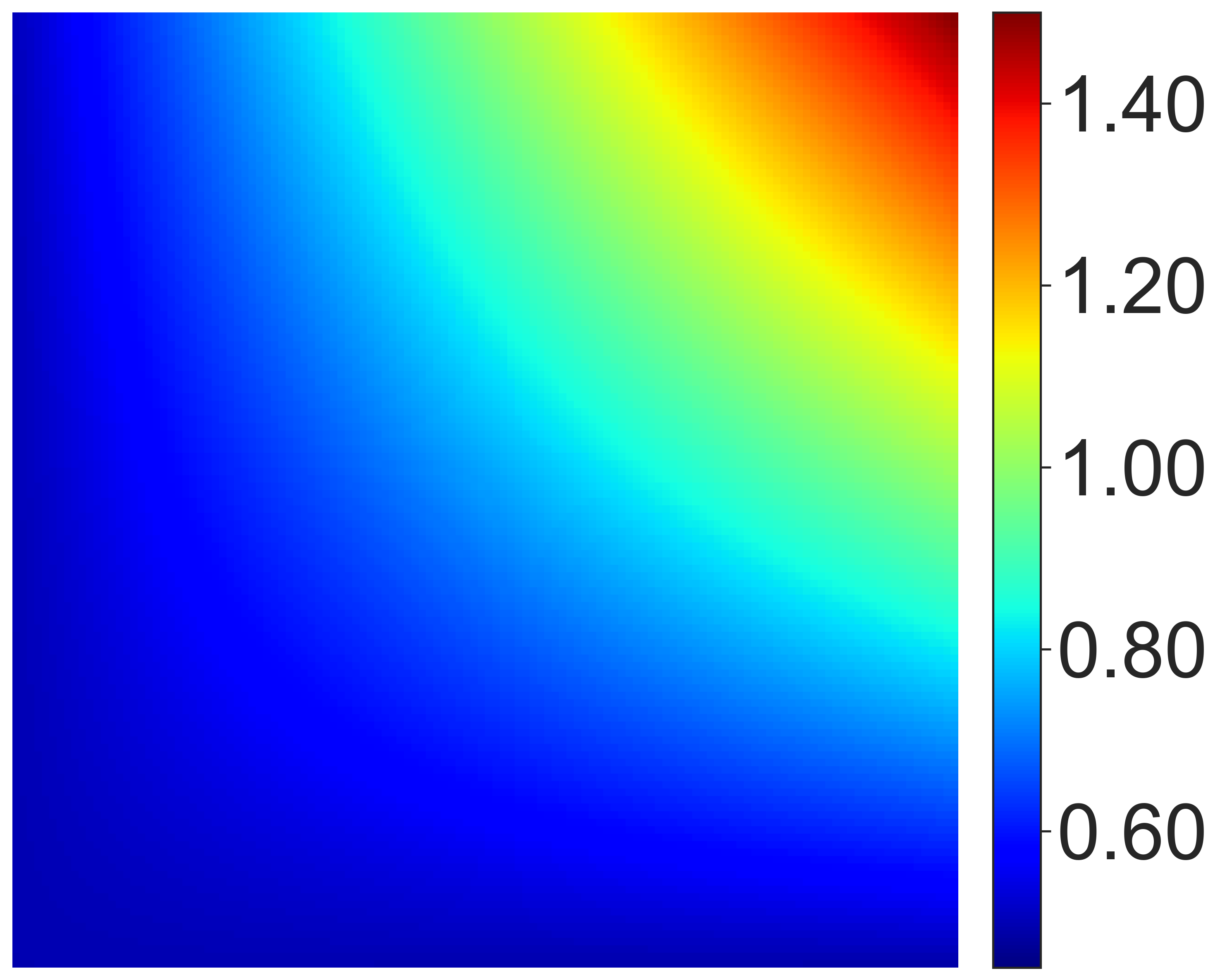} &
        \includegraphics[width=0.19\textwidth]{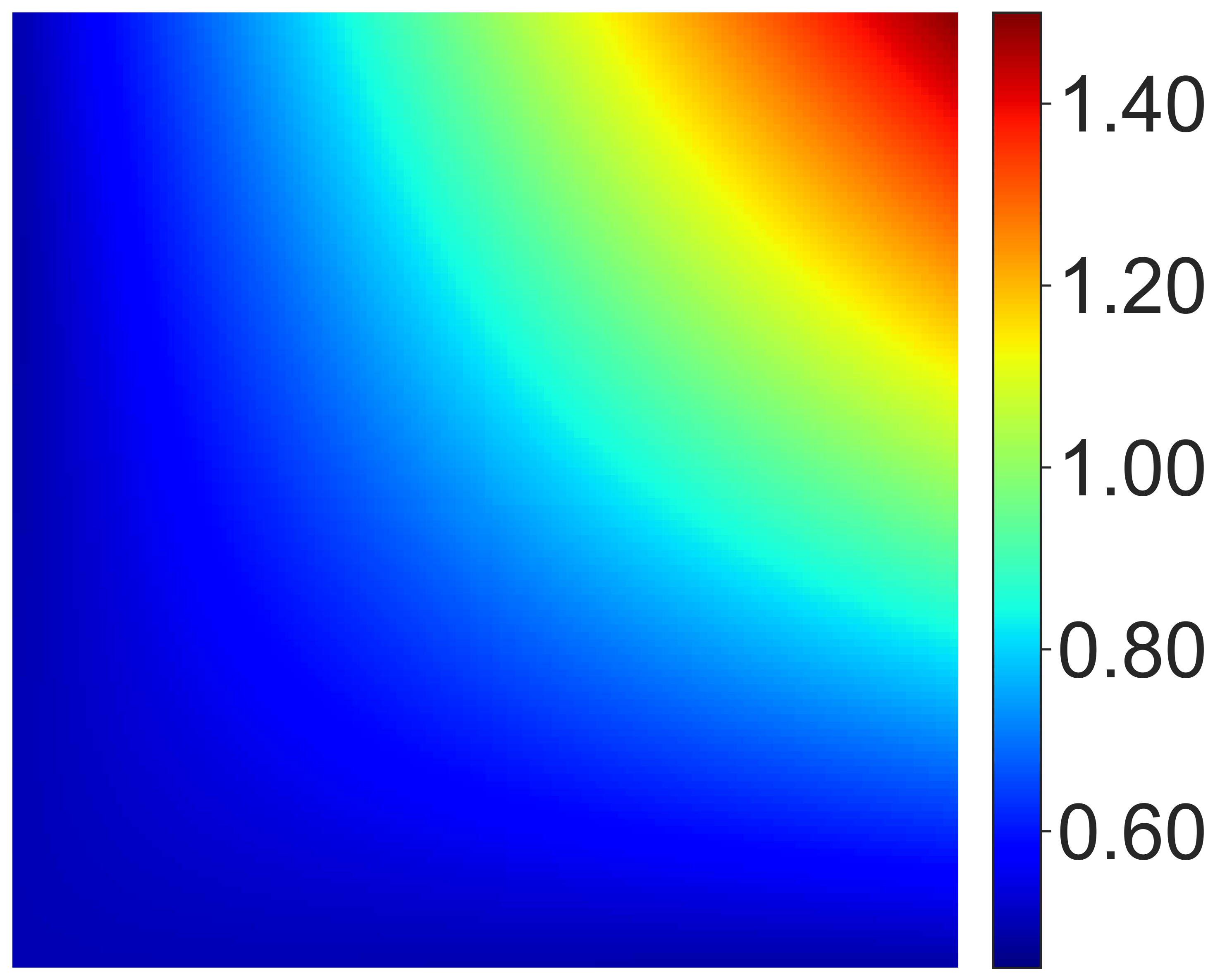} & 
        \includegraphics[width=0.19\textwidth]{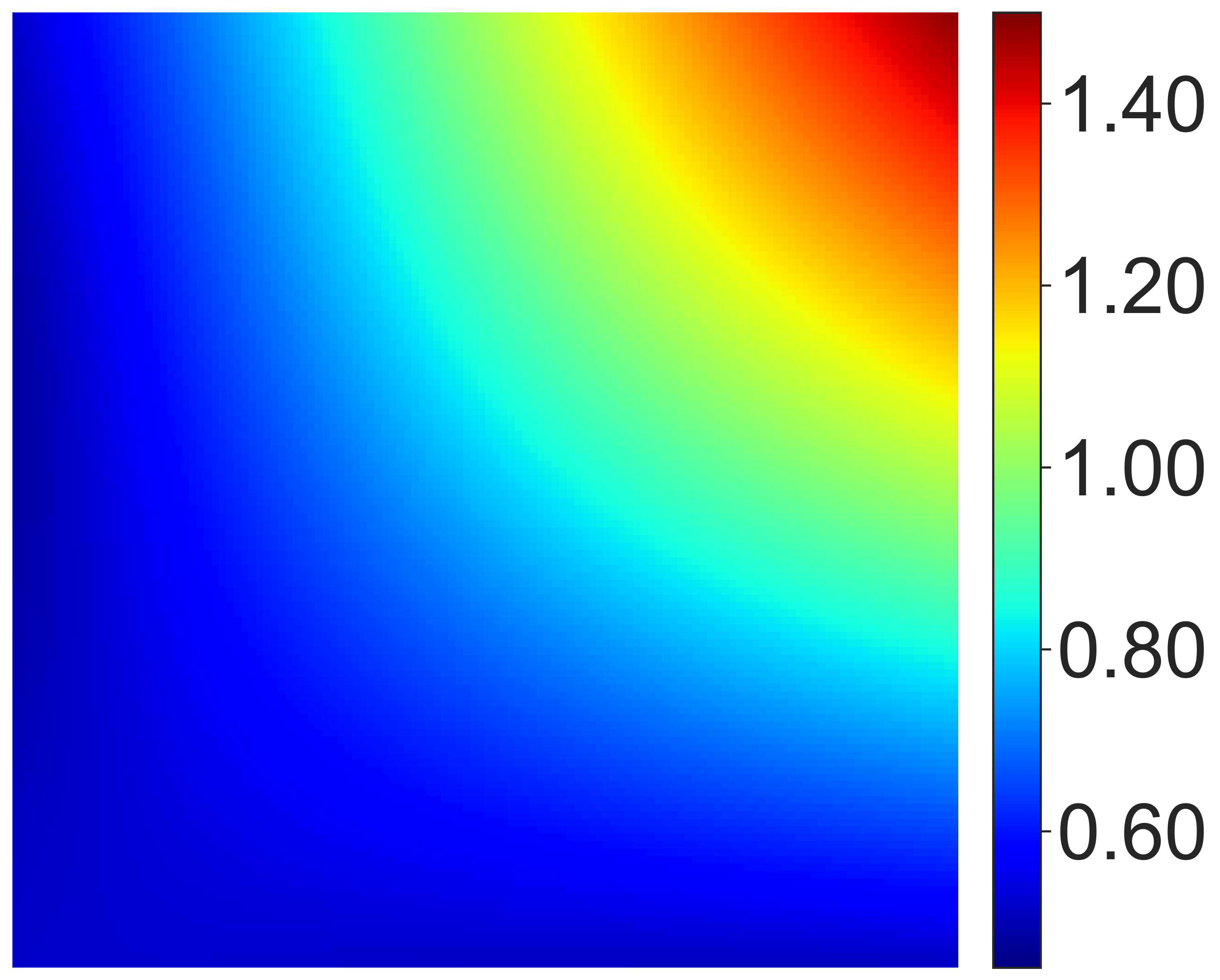} \\
         &
         \includegraphics[width=0.19\textwidth]{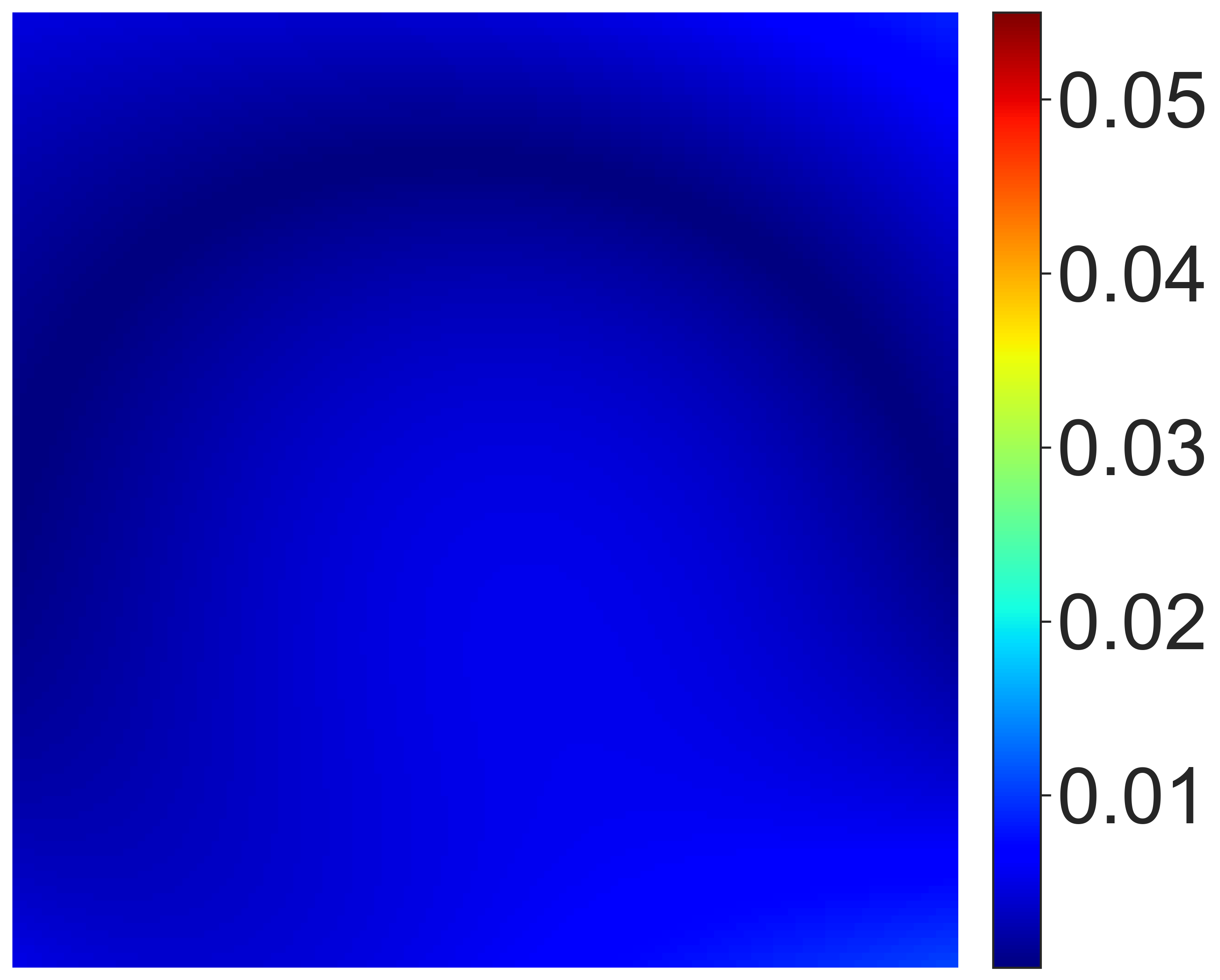} &
         \includegraphics[width=0.19\textwidth]{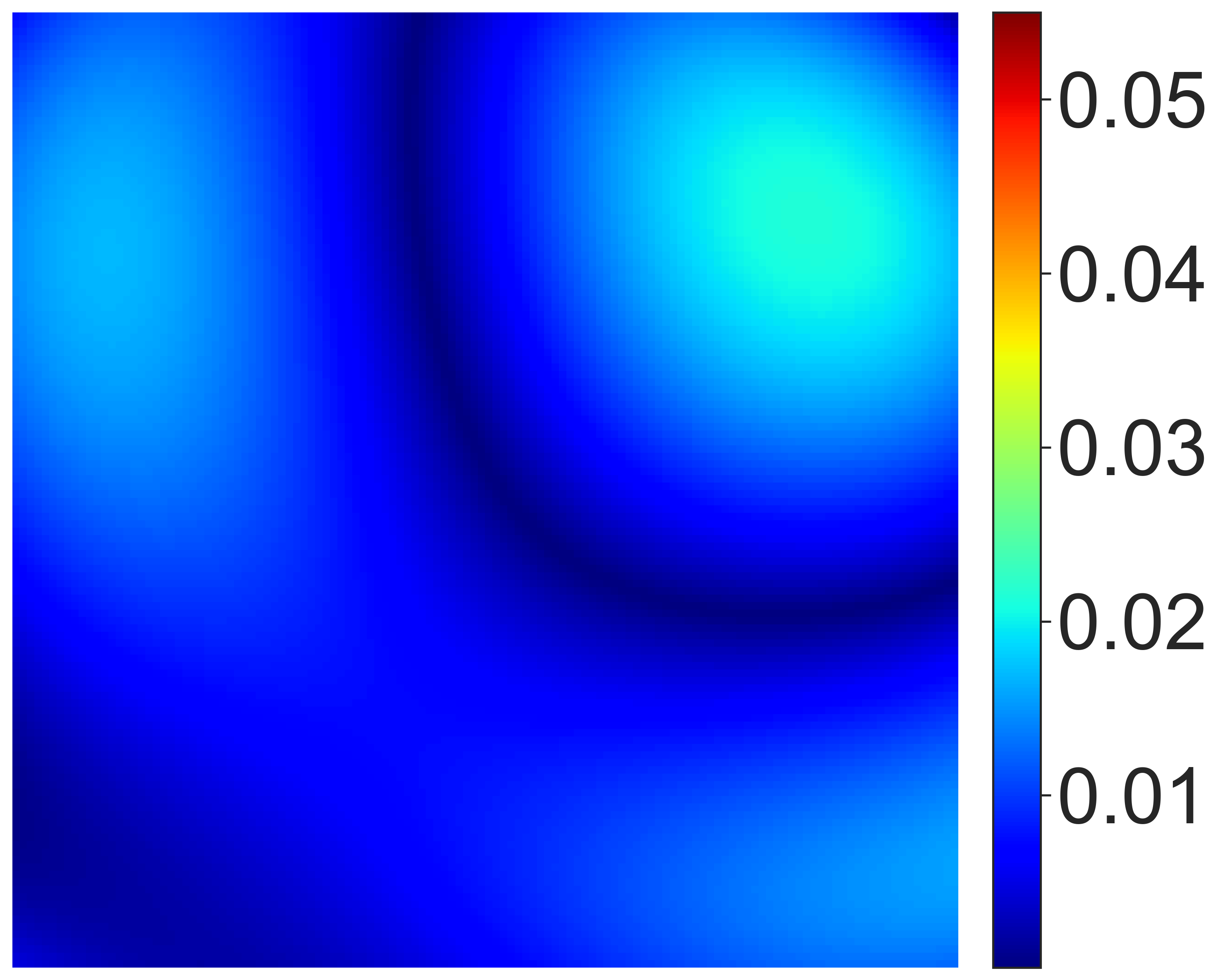} & 
         \includegraphics[width=0.19\textwidth]{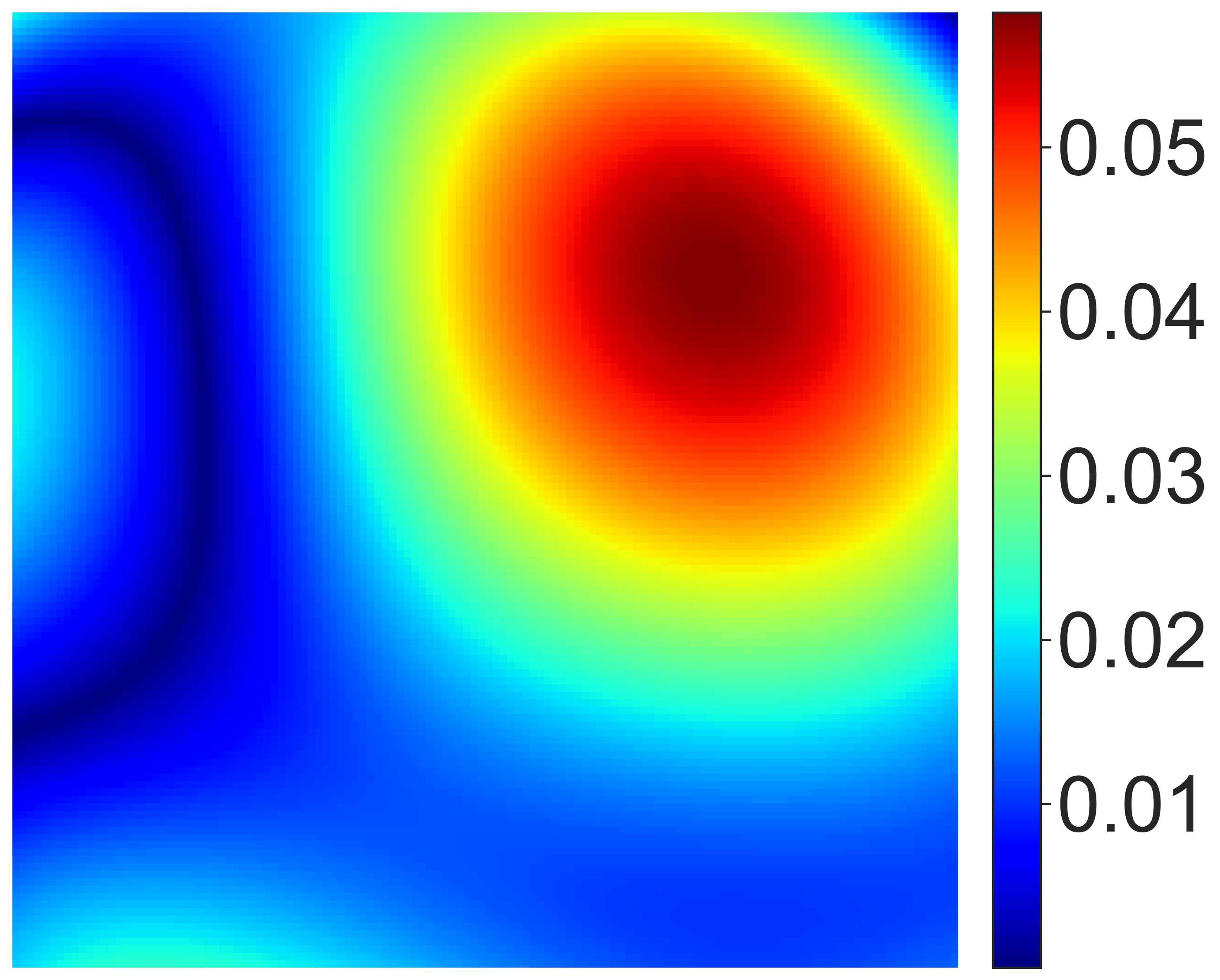} \\
         (a) Exact solution & (b) PINN  & (c) DRM & (d) WAN
    \end{tabular}
    \caption{Approximate solutions in the top row and pointwise errors $|u_{\theta}(x)- u(x)|$ in the bottom row for Example \ref{example:6dpoisson}.}
    \label{fig:6dpoisson}
\end{figure}

The training dynamics is shown in Fig \ref{fig:6dpoisson_dyn}, where the loss value and the validation error is recorded during training. The changing of loss value in PINN and DRM are oscillative at early state due to a large learning rate, but becomes stable after the learning rate being reduced. The loss value decreases until convergence, and validation error decreases in accordingly. The training process of WAN behaves in a similar way as in Example \ref{example:weak}: the solution becomes increasingly accurate at the early stage of training, but the error may start to increase drastically later. The figure indicates that the most accurate solution of WAN during training has about $1\%$ error, which is comparable to DRM. The WAN solution present in Figure \ref{fig:6dpoisson} is the final solution after whole training process, and the error is larger than other two methods.

\begin{figure}[hbt!]
    \centering
    \begin{tabular}{ccc}
        \includegraphics[width=0.28\textwidth]{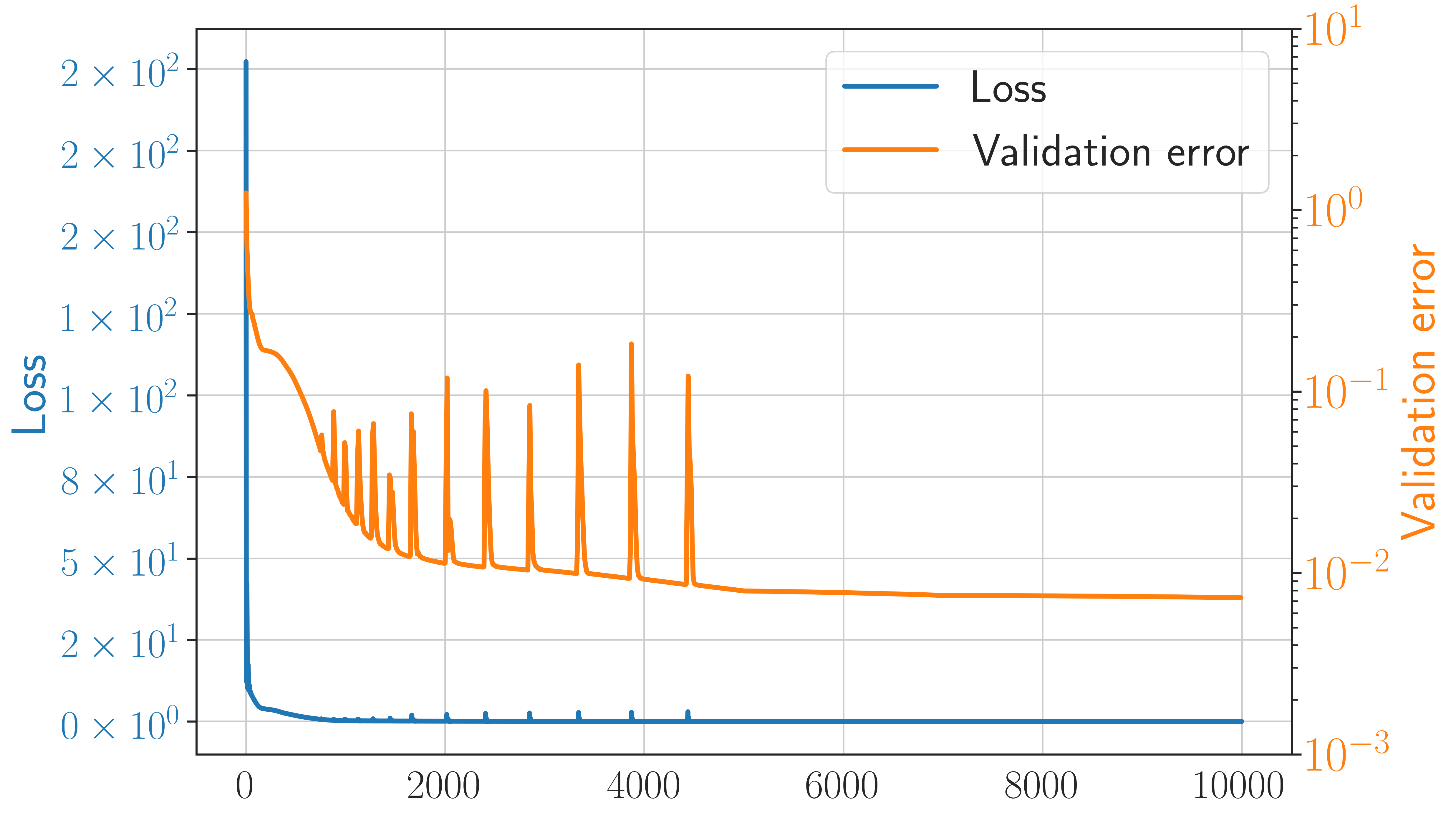} &
        \includegraphics[width=0.28\textwidth]{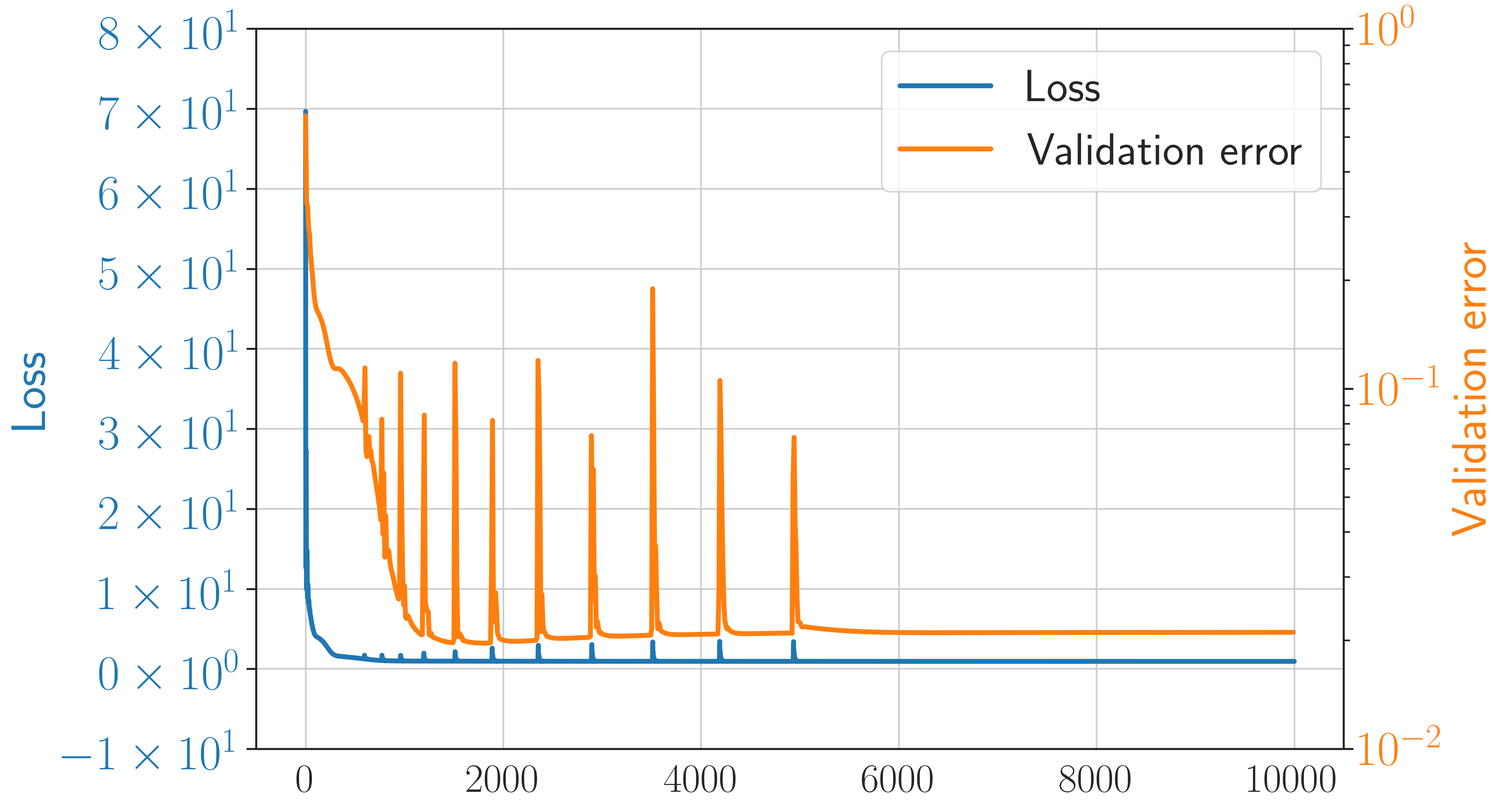} &
        \includegraphics[width=0.28\textwidth]{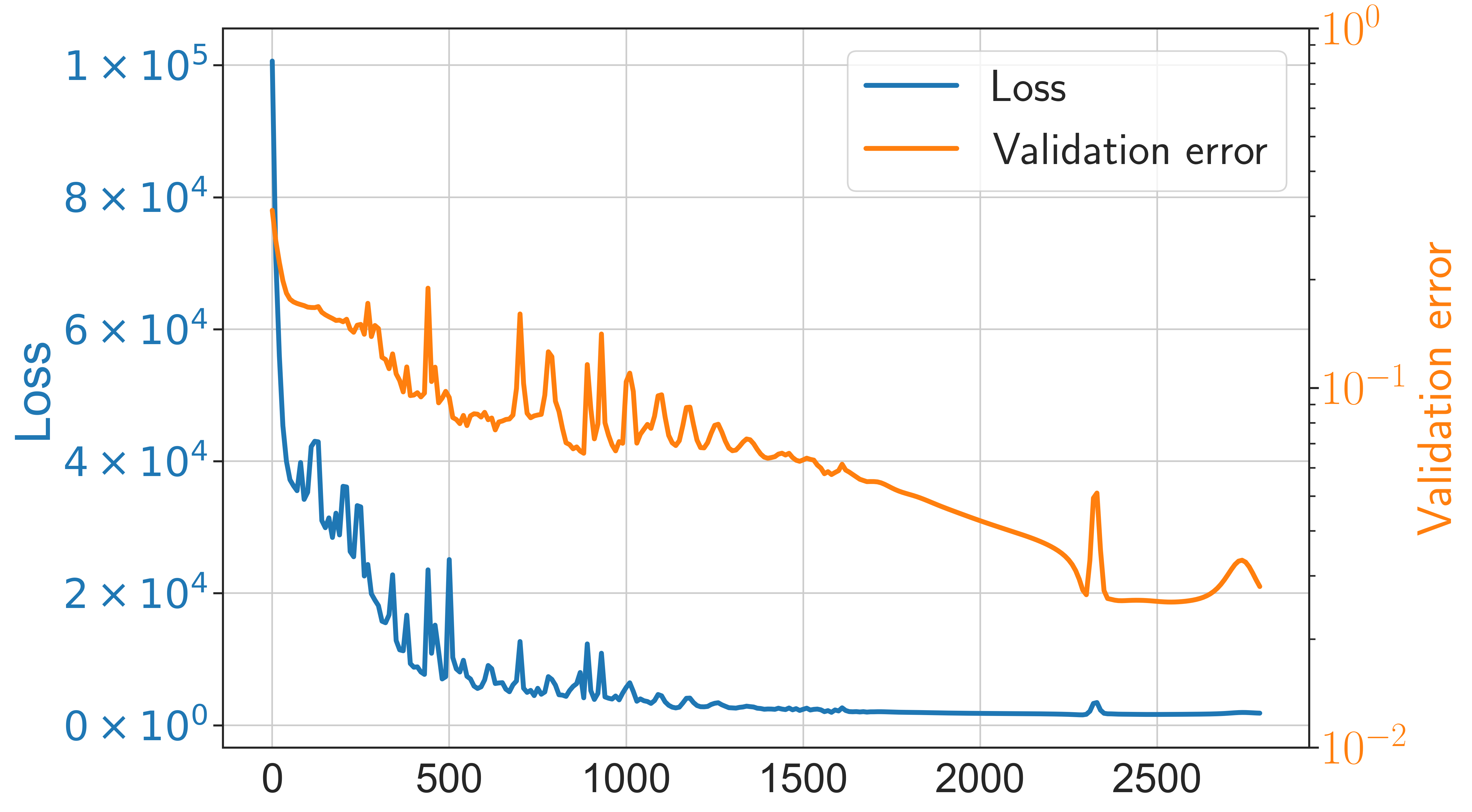} \\
          (a) PINN  & (b) DRM & (c) WAN
    \end{tabular}
    \caption{Training dynamics for Example \ref{example:6dpoisson}. Statistics in PINN and DRM are recorded after each 10 iterations; in WAN are recorded after each outer iteration.}
    \label{fig:6dpoisson_dyn}
\end{figure}

For the completeness of this mathematical review of DNN methods for partial differential equations, we present a detailed error analysis for PINN method in the next section. We briefly describe the error analysis for DRM, VPINN, and WAN in Remark \ref{rem:error_estm_DRM}, \ref{rem:error_estm_VPINN} \& \ref{rem:error_estm_WAN}, detailed error analysis can be found in \cite{JiaoLai:2021error,berrone2022variational,bertoluzza2023best} respectively. 

\section{Error analysis for PINNs method}\label{sec:error_analysis}
 For the simplicity of the error analysis for the PINN method we choose $A=\mathbb{I}\in \mathbb{R}^{d\times d},  \bm{\beta}=\mathbf{0}\in \mathbb{R}^d, c\equiv 0$ in \eqref{eqn:Poisson0}. Therefore our general second-order elliptic PDE \eqref{eqn:Poisson0} reduces to the following Poisson problem:
\begin{equation}\label{eqn:Poisson}
    \left\{\begin{aligned}
         -\Delta u &= f \quad \mbox{in }\Omega,\\
        u & = g\quad \mbox{on }\partial\Omega.
    \end{aligned}\right.
\end{equation}
We will consider the above equation \eqref{eqn:Poisson} for the error analysis.
\begin{lemma}\label{Estm1}
Let $u$ be the classical solution of \eqref{eqn:Poisson}. Let $u_{\theta}\in \mathcal{U}$, then the following estimate holds
\begin{align*}
 \norm{u-u_{\theta}}^2_{L^2(\Omega)}\leq c(\alpha) \mathcal{L}(u_{\theta}).
\end{align*}
\end{lemma}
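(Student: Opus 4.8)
The plan is to study the error function $w := u_\theta - u$ directly. Since $u$ is a classical solution of \eqref{eqn:Poisson} and $u_\theta \in \mathcal{U}$ is smooth (the activation $\rho$ is $\tanh$ or sigmoid), $w$ is a classical solution of the Poisson problem
\[
-\Delta w = r \quad \text{in } \Omega, \qquad w = h \quad \text{on } \partial\Omega,
\]
where $r := -\Delta u_\theta - f$ is the interior residual and $h := u_\theta - g$ is the boundary residual. With this notation one has exactly $\mathcal{L}(u_\theta) = \norm{r}_{L^2(\Omega)}^2 + \alpha \norm{h}_{L^2(\partial\Omega)}^2$, so it suffices to bound $\norm{w}_{L^2(\Omega)}^2$ by a constant times $\norm{r}_{L^2(\Omega)}^2 + \alpha\norm{h}_{L^2(\partial\Omega)}^2$.

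Because $h$ is only controlled in $L^2(\partial\Omega)$, an $H^1$ energy estimate is not the right tool; instead I would use a duality (transposition) argument. Fix an arbitrary $\varphi \in L^2(\Omega)$ and let $\psi$ solve the adjoint problem $-\Delta\psi = \varphi$ in $\Omega$, $\psi = 0$ on $\partial\Omega$. Since $\Omega$ is smooth, elliptic regularity gives $\psi \in H^2(\Omega)\cap H^1_0(\Omega)$ with $\norm{\psi}_{H^2(\Omega)} \le C\norm{\varphi}_{L^2(\Omega)}$. Applying Green's second identity to $w$ and $\psi$ and using $\psi|_{\partial\Omega}=0$ yields
\[
\int_\Omega w\,\varphi = \int_\Omega w(-\Delta\psi) = \int_\Omega(-\Delta w)\psi - \int_{\partial\Omega} w\,\partial_n\psi = \int_\Omega r\,\psi - \int_{\partial\Omega} h\,\partial_n\psi .
\]
Then Cauchy--Schwarz, together with $\norm{\psi}_{L^2(\Omega)} \le \norm{\psi}_{H^2(\Omega)}$ and the trace estimate $\norm{\partial_n\psi}_{L^2(\partial\Omega)} \le C\norm{\psi}_{H^2(\Omega)}$ (the normal derivative of an $H^2$ function lies in $H^{1/2}(\partial\Omega) \hookrightarrow L^2(\partial\Omega)$), gives
\[
\left| \int_\Omega w\,\varphi \right| \le C \norm{\varphi}_{L^2(\Omega)} \big( \norm{r}_{L^2(\Omega)} + \norm{h}_{L^2(\partial\Omega)} \big),
\]
with $C$ depending only on $\Omega$.

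Choosing $\varphi = w$ and dividing by $\norm{w}_{L^2(\Omega)}$ gives $\norm{w}_{L^2(\Omega)} \le C\big(\norm{r}_{L^2(\Omega)} + \norm{h}_{L^2(\partial\Omega)}\big)$; squaring, using $(a+b)^2 \le 2a^2 + 2b^2$, and writing $\norm{h}_{L^2(\partial\Omega)}^2 \le \alpha^{-1}\,\alpha\norm{h}_{L^2(\partial\Omega)}^2$ then yields the claim with $c(\alpha) = 2C^2 \max\{1,\alpha^{-1}\}$. The main obstacle is the boundary term $\int_{\partial\Omega} h\,\partial_n\psi$: controlling it by $\norm{h}_{L^2(\partial\Omega)}$ forces the use of $H^2$ regularity of the adjoint solution and of the trace theorem for its normal derivative, which is precisely where the smoothness of $\Omega$ is needed; the remaining steps are routine manipulation of constants.
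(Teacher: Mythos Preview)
Your argument is correct and is essentially the paper's proof unfolded: the paper simply cites the $L^2$ stability estimate $\norm{v}_{L^2(\Omega)}\le c(\norm{-\Delta v}_{L^2(\Omega)}+\norm{v}_{L^2(\partial\Omega)})$ from \cite[Theorem~4.2]{Berggren:2004} and applies it to $e_u=u-u_\theta$, whereas you prove that very estimate inline by the standard transposition/duality argument. The logical content is identical; your version is slightly more self-contained (and makes explicit where the smoothness of $\Omega$ enters, via $H^2$ regularity of the adjoint state and the trace of $\partial_n\psi$), while the paper's version is shorter by outsourcing this step to the literature.
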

\begin{proof}
The following stability estimate for \eqref{eqn:Poisson} follow form the proof of \cite[Theorem 4.2]{Berggren:2004}, 
\begin{equation}\label{eqn:stab-aux}
\norm{u}_{L^2(\Omega)}\leq c(\norm{f}_{L^2(\Omega)}+\norm{g}_{L^2(\partial\Omega)}).
\end{equation}
Let $e_u =u-u_{\theta}$. Then $e_u$ satisfy the following problem
\begin{align}
&\left\{\begin{aligned}
    -\Delta e_u &=f+\Delta u_{\theta}\quad\text{in}\; \Omega,\\
     e_u &= g-u_\theta \quad \text{on}\; \partial\Omega,
\end{aligned}\right.\label{AuxTilde1}
\end{align}

Using the estimate \eqref{eqn:stab-aux} for the equation \eqref{AuxTilde1} we obtain

\begin{equation}\label{eqn:stab-aux1}
\norm{e_u}_{L^2(\Omega)}\leq c(\norm{f+\Delta u_\theta}_{L^2(\Omega)}+\norm{u_\theta - g}_{L^2(\partial\Omega)}).
\end{equation}

Taking the square on both sides of the \eqref{eqn:stab-aux1}, and introducing $\alpha$ we obtain the desired estimate.  
\end{proof}

The next result gives one crucial error decomposition. The errors $\mathcal{E}_{app}$, $\mathcal{E}_{stat}$ and
$\mathcal{E}_{opt}$ refer to the approximation error (due to restricting the space $H^2(\Omega)$ to the set of
neural network functions), the statistical error (due to approximating the continuous integrals via Monte Carlo
methods) and the optimization error (arising from the approximate local minimizer instead of a global minimizer).
The rest of the analysis focuses on the approximation and statistical errors. We ignore the optimization error,
whose analysis is notoriously challenging and is completely open.
\begin{theorem}\label{ErrEstm}
Let $u$ be the continuous solution and $u^{\mathcal{S}}_{\theta}$ be the solution of the problem \eqref{PINN:formulation} generated by a random solver $\mathcal{S},$ and $u_{\theta^*}$ be the global minimizer to the
loss $\widehat {\mathcal{L}}(u_{\theta})$. Then the following estimate holds
\begin{align}\label{error_form}
   \norm{u-u^{\mathcal{S}}_{\theta}}^2_{L^2(\Omega)}\leq c(\alpha)(\mathcal{E}_{app}+\mathcal{E}_{sta}+\mathcal{E}_{opt}),
\end{align}
where the errors $\mathcal{E}_{app}$, $\mathcal{E}_{stat}$ and $\mathcal{E}_{opt}$ are defined respectively by
\begin{align*}
\mathcal{E}_{app}&= \inf_{u_{\theta}\in \mathcal{U}} \norm{u-u_{\theta}}^2_{H^2(\Omega)},\quad
\mathcal{E}_{stat}= \sup_{u_{\theta}\in \mathcal{U}} [\mathcal{L}(u_{\theta})-\widehat{\mathcal{L}}(u_{\theta})] + \sup_{u_{\theta}\in \mathcal{U}} [\widehat{\mathcal{L}}(u_{\theta})-\mathcal{L}(u_{\theta})],\\
\mathcal{E}_{opt} &= \widehat{\mathcal{L}}(u^{\mathcal{S}}_{\theta})-\widehat{\mathcal{L}}(u_{\theta^*}).
\end{align*}
\end{theorem}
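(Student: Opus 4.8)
The plan is to establish the error decomposition \eqref{error_form} by combining Lemma~\ref{Estm1}, which already controls $\|u-u_\theta\|_{L^2(\Omega)}^2$ by $c(\alpha)\mathcal{L}(u_\theta)$ for \emph{any} network $u_\theta\in\mathcal{U}$, with a standard add-and-subtract argument that splits $\mathcal{L}(u^{\mathcal{S}}_\theta)$ into an approximation piece, a statistical (generalization) piece, and an optimization piece. First I would apply Lemma~\ref{Estm1} with $u_\theta=u^{\mathcal{S}}_\theta$ to get $\|u-u^{\mathcal{S}}_\theta\|_{L^2(\Omega)}^2\le c(\alpha)\,\mathcal{L}(u^{\mathcal{S}}_\theta)$, so that it suffices to bound the \emph{continuous} loss $\mathcal{L}(u^{\mathcal{S}}_\theta)$ by $\mathcal{E}_{app}+\mathcal{E}_{sta}+\mathcal{E}_{opt}$ up to a constant.

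The core of the argument is the chain of inequalities obtained by inserting the empirical loss. Write
\begin{align*}
\mathcal{L}(u^{\mathcal{S}}_\theta) &= \bigl(\mathcal{L}(u^{\mathcal{S}}_\theta)-\widehat{\mathcal{L}}(u^{\mathcal{S}}_\theta)\bigr) + \bigl(\widehat{\mathcal{L}}(u^{\mathcal{S}}_\theta)-\widehat{\mathcal{L}}(u_{\theta^*})\bigr) + \bigl(\widehat{\mathcal{L}}(u_{\theta^*})-\widehat{\mathcal{L}}(u_{\theta_a})\bigr) + \bigl(\widehat{\mathcal{L}}(u_{\theta_a})-\mathcal{L}(u_{\theta_a})\bigr) + \mathcal{L}(u_{\theta_a}),
\end{align*}
where $u_{\theta_a}\in\mathcal{U}$ is a network chosen (nearly) attaining the infimum defining $\mathcal{E}_{app}$. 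The second bracket is exactly $\mathcal{E}_{opt}$. The third bracket is $\le 0$ because $u_{\theta^*}$ is the global minimizer of $\widehat{\mathcal{L}}$ over $\mathcal{U}$, so it is harmless. The first and fourth brackets are each bounded by $\sup_{u_\theta\in\mathcal{U}}[\mathcal{L}(u_\theta)-\widehat{\mathcal{L}}(u_\theta)]$ respectively $\sup_{u_\theta\in\mathcal{U}}[\widehat{\mathcal{L}}(u_\theta)-\mathcal{L}(u_\theta)]$, whose sum is $\mathcal{E}_{sta}$. It remains to handle the last term $\mathcal{L}(u_{\theta_a})$: here one uses the explicit form of the continuous loss \eqref{cont:loss} with $\mathcal{A}=\mathbb{I}$, $\bm\beta=\mathbf 0$, $c\equiv 0$, namely $\mathcal{L}(u_{\theta_a})=\|\Delta(u-u_{\theta_a})\|_{L^2(\Omega)}^2+\alpha\|u-u_{\theta_a}\|_{L^2(\partial\Omega)}^2$ (using $-\Delta u=f$ and $u=g$), and then controls each term by $\|u-u_{\theta_a}\|_{H^2(\Omega)}^2$ via a trace inequality for the boundary term, giving $\mathcal{L}(u_{\theta_a})\le C(1+\alpha)\,\mathcal{E}_{app}$ after taking the infimum over $u_{\theta_a}$. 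Absorbing all constants into $c(\alpha)$ yields \eqref{error_form}.

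I expect the main obstacle to be the statistical term $\mathcal{E}_{sta}$: as written in the statement it is \emph{defined} to be those two suprema, so for this theorem it appears only as a placeholder, but making the bound meaningful (which the surrounding text promises to do later) requires a uniform law of large numbers / Rademacher complexity estimate over the class $\mathcal{U}=\mathcal{N}_\rho(L,\bm n_L,R)$, exploiting that the integrands are bounded and Lipschitz on the compact parameter box $|\theta|_{\ell^\infty}\le R$ with smooth activation $\rho$. Within the present theorem, however, the decomposition itself is purely algebraic once Lemma~\ref{Estm1}, the global-minimizer property of $u_{\theta^*}$, the stability estimate \eqref{eqn:stab-aux}, and a trace inequality are in hand, so no delicate estimate is needed to state and prove \eqref{error_form} as given; the only care required is the correct bookkeeping of the $\alpha$-dependent constants and the choice of the near-minimizer $u_{\theta_a}$ (or a limiting argument since the infimum in $\mathcal{E}_{app}$ is attained on the compact set $\mathcal{U}$).
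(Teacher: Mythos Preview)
Your proposal is correct and follows essentially the same route as the paper's proof: apply Lemma~\ref{Estm1} to reduce to bounding $\mathcal{L}(u^{\mathcal{S}}_\theta)$, then use the same five-term add-and-subtract chain through $\widehat{\mathcal{L}}(u^{\mathcal{S}}_\theta)$, $\widehat{\mathcal{L}}(u_{\theta^*})$, $\widehat{\mathcal{L}}(u_{\theta_a})$, $\mathcal{L}(u_{\theta_a})$, drop the third bracket by the minimizing property of $u_{\theta^*}$, bound the first and fourth by the two suprema defining $\mathcal{E}_{stat}$, and control $\mathcal{L}(u_{\theta_a})$ by $\|u-u_{\theta_a}\|_{H^2(\Omega)}^2$ via the trace inequality. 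The only cosmetic difference is that the paper writes the decomposition for $\mathcal{L}(u^{\mathcal{S}}_\theta)-\mathcal{L}(u)$ (with $\mathcal{L}(u)=0$) and invokes Lemma~\ref{Estm1} at the end rather than at the start.
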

\begin{proof}
It follows directly that for any $u_{\theta}\in \mathcal{U}$
\begin{align*}
    \mathcal{L}(u^{\mathcal{S}}_{\theta})-\mathcal{L}(u)
   =&\mathcal{L}(u^{\mathcal{S}}_{\theta})-\widehat{\mathcal{L}}(u^{\mathcal{S}}_{\theta})+\widehat{\mathcal{L}}(u^{\mathcal{S}}_{\theta})-\widehat{\mathcal{L}}(u_{\theta^*}) +\widehat{\mathcal{L}}(u_{\theta^*}) -\widehat{\mathcal{L}}({u}_{\theta})\\ &+\widehat{\mathcal{L}}({u}_{\theta})-\mathcal{L}({u}_{\theta}) +\mathcal{L}({u}_{\theta})-\mathcal{L}(u)\\
\leq&\sup\limits_{u_{\theta}\in \mathcal{U}} [\mathcal{L}(u_{\theta})-\widehat{\mathcal{L}}(u_{\theta})]+\widehat{\mathcal{L}}(u^{\mathcal{S}}_{\theta})- \widehat{\mathcal{L}}(u_{\theta^*})\\
&+\sup_{u_{\theta}\in \mathcal{U}} [\widehat{\mathcal{L}}(u_{\theta})-\mathcal{L}(u_{\theta})]+\mathcal{L}(u_{\theta})-\mathcal{L}(u),
\end{align*}
where the inequality is due to the minimizing property of $u_{\theta^*}$, i.e.,
$\widehat{\mathcal{L}}(u_{\theta^*}) -\widehat{\mathcal{L}}({u}_{\theta})\leq 0$.
Now, by the trace inequality
\begin{equation*}
\norm{u_{\theta}-u}_{L^2(\partial\Omega)}\leq c \norm{u_{\theta}-u}_{H^2(\Omega)},
\end{equation*}
 since $\mathcal{L}(u)=0$, we deduce
\begin{align*}
 &\mathcal{L}(u_{\theta})-\mathcal{L}(u)=  \mathcal{L}(u_{\theta})\\
=& \norm{\Delta u_{\theta}+f}^2_{L^2(\Omega)}+\alpha \norm{u_{\theta}-g}^2_{L^2(\partial\Omega)}\\
    =& \norm{\Delta (u_{\theta}-u)}^2_{L^2(\Omega)}+\alpha \norm{u_{\theta}-u}^2_{L^2(\partial\Omega)}\\
    \leq& c(\alpha) \norm{u-u_{\theta}}^2_{H^2(\Omega)}.
\end{align*}
Consequently, we have
\begin{align}\label{Estm2}
     \mathcal{L}(u^{\mathcal{S}}_{\theta})-\mathcal{L}(u)\leq&\sup_{u_{\theta}\in \mathcal{U}} [\mathcal{L}(u_{\theta})-\widehat{\mathcal{L}}(u_{\theta})]+\widehat{\mathcal{L}}(u^{\mathcal{S}}_{\theta})-\widehat{\mathcal{L}}(u_{\theta^*})+\sup_{u_{\theta}\in \mathcal{U}} [\widehat{\mathcal{L}}(u_{\theta})-\mathcal{L}(u_{\theta})]\nonumber\\
     &+c(\alpha) \inf_{u_{\theta}\in \mathcal{U}} \norm{u-u_{\theta}}^2_{H^2(\Omega)}.
\end{align}
It follows from Lemma \ref{Estm1} and \eqref{Estm2} that
\begin{align*}
     \norm{u-u^{\mathcal{S}}_{\theta}}^2_{L^2(\Omega)}\leq& c(\alpha) \Bigl[\sup_{u_{\theta}\in \mathcal{U}} [\mathcal{L}(u_{\theta})-\widehat{\mathcal{L}}(u_{\theta})]
    +\widehat{\mathcal{L}}(u^{\mathcal{S}}_{\theta})-\widehat{\mathcal{L}}(u_{\theta^*})\\
    &+\sup_{u_{\theta}\in \mathcal{U}} [\widehat{\mathcal{L}}(u_{\theta})-\mathcal{L}(u_{\theta})]+ \inf_{u_{\theta}\in \mathcal{U}} \norm{u-u_{\theta}}^2_{H^2(\Omega)}\Bigr].
\end{align*}
This proves the desired assertion.
\end{proof}

\subsection{Approximation Error}\label{subsec:approxerr}
We recall the following result on the approximation error \cite[Proposition 4.8]{GuhrinRaslan:2021}.
\begin{proposition}\label{prop:approx}
    Let $p\geq 1,m,k,d\in \mathbb{N}^{+}, m\geq k+1.$ let $\rho$ be the logistic or $\tanh$ function. For any $\epsilon>0$ and $y\in W^{m,p}([0,1]^d)$ with $\norm{y}_{W^{m,p}([0,1]^d)}\leq 1,$ there exists a neural network $y_{\theta}$ with depth $c\log(d+m)$ and $c(d,m,p,k)\epsilon^{-\frac{d}{m-k-\mu}}$ non-zero weights such that
    \begin{align*}
        \norm{y-y_{\theta}}_{W^{k,p}([0,1]^d)}\leq \epsilon,
    \end{align*}
    where $\mu$ is an arbitrarily small positive number.    Moreover, the weights $\theta$ in the neural network $y_\theta$ are bounded in absolute value by
$c(d,m,p,k)\epsilon^{-2-\frac{2(d/p+d+k+\mu)+d/p+d}{m-k-\mu }}.$
\end{proposition}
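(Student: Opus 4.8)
The plan is to follow the by-now standard two-stage recipe for emulating Sobolev functions by smooth-activation networks: first replace $y$ by a localized polynomial, then emulate that polynomial by a $\tanh$/logistic network whose size and weight magnitudes are tracked explicitly. Since the statement is quoted verbatim from \cite[Proposition 4.8]{GuhrinRaslan:2021}, in the paper itself one simply invokes it; what follows is the route I would take to reprove it.

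\textbf{Step 1: localized polynomial approximation.} Cover $[0,1]^d$ by $\sim h^{-d}$ overlapping cubes of side $\sim h$ and take a smooth partition of unity $\{\psi_i\}$ subordinate to this cover. On each patch, replace $y$ by an averaged Taylor polynomial $P_i$ of degree $m-1$; the Bramble--Hilbert lemma gives $\|y-P_i\|_{W^{k,p}}\lesssim h^{m-k}\|y\|_{W^{m,p}}$ on the patch. Assembling $y_h:=\sum_i\psi_i P_i$ and using the bounded overlap of the cover yields $\|y-y_h\|_{W^{k,p}([0,1]^d)}\lesssim h^{m-k}$. Taking $h\sim\epsilon^{1/(m-k)}$ makes this $\lesssim\epsilon$ and fixes the number of patches at $\sim\epsilon^{-d/(m-k)}$.

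\textbf{Step 2: network emulation.} The only nonlinear primitives needed are (i) the fixed univariate bump that generates each $\psi_i$ after an affine change of variables, (ii) scalar multiplication $(a,b)\mapsto ab$, hence (iii) monomials $x^\alpha$ with $|\alpha|\le m-1$, obtained from (ii) by a balanced binary tree of multiplications of depth $O(\log m)$, together with (iv) an approximate identity to carry inputs through layers. For $\tanh$ (respectively the logistic function) each primitive is realized by a width-$O(1)$ network that is $\delta$-accurate in $W^{k,\infty}$ with weights of size $\mathrm{poly}(\delta^{-1})$ --- for instance $ab=\tfrac14[(a+b)^2-(a-b)^2]$, and $t\mapsto t^2$ is recovered from a second difference of $\tanh$ near $0$, which forces weights of order $\delta^{-1}$. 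Composing these, each summand $\psi_iP_i$ becomes a sub-network of depth $O(\log m+\log d)=O(\log(d+m))$ and $O(1)$ weights (in $\epsilon$); placing the $\sim\epsilon^{-d/(m-k)}$ sub-networks in parallel and summing produces the global network $y_\theta$.

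\textbf{Accounting and the role of $\mu$.} Expanding the emulation error with the Leibniz rule and the chain rule for higher-order derivatives up to order $k$ shows that the errors are amplified, not merely added, under composition; choosing the internal tolerance $\delta$ small enough that the accumulated $W^{k,p}$ error stays $\lesssim\epsilon$ is what introduces the arbitrarily small loss $\mu>0$ in the exponent, giving $c(d,m,p,k)\epsilon^{-d/(m-k-\mu)}$ nonzero weights. Propagating the $\mathrm{poly}(\delta^{-1})$ weight bounds through the $O(\log(d+m))$ levels of composition then yields the stated absolute weight bound $c(d,m,p,k)\epsilon^{-2-\frac{2(d/p+d+k+\mu)+d/p+d}{m-k-\mu}}$. \textbf{The main obstacle} is precisely this last accounting: the device that makes $\tanh$ emulate polynomials accurately (differencing near $0$) is exactly what inflates the weights, so small $W^{k,p}$ error and polynomially bounded weights pull against each other, and making that trade-off quantitative --- which is where $\mu$ is spent --- is the technical heart of the argument.
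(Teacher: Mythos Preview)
Your reading is correct: the paper does not prove this proposition at all --- it simply quotes it as \cite[Proposition 4.8]{GuhrinRaslan:2021} and moves on, so there is nothing in the paper's own argument to compare against. Your two-stage sketch (localized polynomial approximation via a partition of unity and Bramble--Hilbert, then $\tanh$/logistic emulation of the polynomial primitives with explicit weight tracking) is indeed the route taken in the cited source, and your identification of the weight-inflation trade-off as the place where the slack parameter $\mu$ is spent is accurate.
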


Then we have the following bound on the approximation error $\mathcal{E}_{app}$.
\begin{lemma}\label{lem:approx}
Fix a tolerance $\epsilon\in (0,1)$. Let the optimal solution $u\in H^3(\Omega)$, then there exist neural network
functions $ u_{\theta}\in \mathcal{U} $ with
$\mathcal{U} = \mathcal{N}_{\rho}\Bigl(c\log(d+3),c(d)\epsilon^{-\frac{d}{1-\mu}},c(d)\epsilon^{-\frac{9d+12}{2-2\mu}}\Bigr)$ such that
$$\mathcal{E}_{app}\leq c(\norm{u}_{H^3(\Omega)}) \epsilon^2,$$
where the constant $c$ depends on the $H^3(\Omega)$ regularity of $u$.
\end{lemma}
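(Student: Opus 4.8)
The plan is to derive the estimate directly from Proposition \ref{prop:approx} after three reductions: extending $u$ to a cube, normalizing it to unit $H^3$-norm, and undoing the scalings at the end. Since $\Omega$ is a smooth bounded domain, there is a bounded linear (Sobolev/Stein) extension operator $E\colon H^3(\Omega)\to H^3(\mathbb{R}^d)$. Fix an affine bijection $T$ from a cube $Q\supset\bar\Omega$ onto $[0,1]^d$, multiply $Eu$ by a smooth cutoff that equals $1$ on $\bar\Omega$ and is supported in $Q$, and transplant by $T$; this produces $\widetilde u\in H^3([0,1]^d)$ with $\widetilde u\circ T=u$ on $\Omega$ and $\norm{\widetilde u}_{H^3([0,1]^d)}\le C_\Omega\norm{u}_{H^3(\Omega)}=:C_u$, where $C_\Omega$ depends only on $\Omega$ (hence may be folded into the dimensional constants). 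Putting $y:=\widetilde u/C_u$ gives $\norm{y}_{H^3([0,1]^d)}\le 1$, which is precisely the hypothesis of Proposition \ref{prop:approx}.

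Next I would apply Proposition \ref{prop:approx} with $p=2$, $m=3$, $k=2$ (the requirement $m\ge k+1$ holds with equality, which is exactly where $u\in H^3$ is used): for the prescribed $\epsilon\in(0,1)$ there is a network $y_\theta$ of depth $c\log(d+3)$ with at most $c(d)\,\epsilon^{-d/(m-k-\mu)}=c(d)\,\epsilon^{-d/(1-\mu)}$ nonzero weights, all weights bounded in absolute value by $c(d)\,\epsilon^{-2-\frac{2(d/p+d+k+\mu)+d/p+d}{m-k-\mu}}$, and with $\norm{y-y_\theta}_{H^2([0,1]^d)}\le\epsilon$. Specializing the exponent in the weight bound to $p=2$, $k=2$, $m-k=1$ gives $2\big(\tfrac d2+d+2+\mu\big)+\tfrac d2+d=\tfrac{9d}{2}+4+2\mu$, and combining the leading $-2$ with this over the common denominator $1-\mu$ yields $-\frac{(9d+12)/2}{1-\mu}=-\frac{9d+12}{2-2\mu}$; together with $d/(m-k-\mu)=d/(1-\mu)$ and depth $c\log(d+m)=c\log(d+3)$, this matches the architecture $\mathcal{U}$ in the statement.

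Finally I would unwind the reductions. Set $u_\theta:=C_u\,(y_\theta\circ T)\big|_\Omega$. Composing with the fixed affine map $T$ only modifies the first-layer weights and biases, and multiplying by the constant $C_u$ only rescales the last-layer weights, so after enlarging the dimensional constants $u_\theta$ still belongs to $\mathcal{N}_\rho\big(c\log(d+3),c(d)\epsilon^{-d/(1-\mu)},c(d)\epsilon^{-(9d+12)/(2-2\mu)}\big)=\mathcal{U}$. Since $u=C_u\,(y\circ T)$ on $\Omega$, the chain rule / change of variables gives a constant $C$ depending only on $d$ and $T$ with
\[
\norm{u-u_\theta}_{H^2(\Omega)}=\norm{C_u\,(y-y_\theta)\circ T}_{H^2(\Omega)}\le C\,C_u\,\norm{y-y_\theta}_{H^2([0,1]^d)}\le C\,C_u\,\epsilon .
\]
Squaring and taking the infimum over $u_\theta\in\mathcal{U}$ gives $\mathcal{E}_{app}\le c(\norm{u}_{H^3(\Omega)})\,\epsilon^2$, as claimed.

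The step I expect to be the main obstacle is not conceptual but bookkeeping: one has to (i) justify the bounded extension operator (routine for smooth bounded $\Omega$, but it is the only place the geometry of $\Omega$ enters), and, more delicately, (ii) verify that post-composing $y_\theta$ with the affine rescaling $T$ and pre-multiplying by $C_u$ keeps the network inside the advertised class $\mathcal{N}_\rho(L,\boldsymbol n_L,R)$ with the stated depth, weight count and weight bound — in particular that the extra factors are absorbed by the dimension-dependent constants (the weight bound may then also inherit a mild dependence on $\norm{u}_{H^3(\Omega)}$ through $C_u$). A minor point worth recording is that Proposition \ref{prop:approx} controls the error in $H^2$ over the whole cube, which dominates $\norm{\cdot}_{H^2(\Omega)}$ after the change of variables, so restricting $u_\theta$ back to $\Omega$ is harmless.
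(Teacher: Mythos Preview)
Your proposal is correct and follows essentially the same route as the paper: normalize $u$ to unit $H^3$-norm, invoke Proposition~\ref{prop:approx} with $p=2$, $m=3$, $k=2$ (your exponent bookkeeping is accurate), and then undo the normalization to obtain $\mathcal{E}_{app}\le c(\|u\|_{H^3(\Omega)})\epsilon^2$. The only difference is that you explicitly handle the passage from a general smooth domain $\Omega$ to the cube $[0,1]^d$ via a Stein extension and affine rescaling, whereas the paper applies Proposition~\ref{prop:approx} directly without addressing this point; your extra care there is warranted, and your caveat that the weight bound in $R$ may silently inherit a dependence on $\|u\|_{H^3(\Omega)}$ through $C_u$ is a correct observation.
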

\begin{proof}
Indeed, we have
\begin{align*}
 \inf\limits_{u_{\theta}\in \mathcal{U}}\norm{u-u_{\theta}}^2_{H^2(\Omega)}&=\norm{u}^2_{H^3(\Omega)}\inf_{u_{\theta}\in \mathcal{U}}\norm{\frac{u}{\norm{u}_{H^3(\Omega)}}-\frac{u_{\theta}}{\norm{u}_{H^3(\Omega)}}}^2_{H^2(\Omega)}
 =\norm{u}^2_{H^3(\Omega)}\inf\limits_{u_{\theta}\in \mathcal{U}}\norm{\frac{u}{\norm{u}_{H^3(\Omega)}}-u_{\theta}}^2_{H^2(\Omega)}.
\end{align*}
By Proposition \ref{prop:approx}, there exists a neural network function $u_{\theta}\in \mathcal{U}= \mathcal{N}_{\rho}(c\log(d+3),c(d)\epsilon^{-\frac{d}{1-\mu}},c(d)\epsilon^{-\frac{9d+12}{2-2\mu}})$
such that
$\norm{\frac{u}{\norm{u}_{H^3(\Omega)}}-u_{\theta}}_{H^2(\Omega)}\leq \epsilon$. Hence,
\begin{align*}
    \mathcal{E}_{app}&= \inf\limits_{u_{\theta}\in \mathcal{U}} \norm{u-u_{\theta}}^2_{H^2(\Omega)}
    \leq c(\norm{u}_{H^3(\Omega)}) \epsilon^2.
\end{align*}
Combining these results yields the desired assertion.
\end{proof}

\subsection{Statistical errors}\label{subsec:staterr}
The error $\mathcal{E}_{stat}= \sup_{u_{\theta}\in \mathcal{U}} |\mathcal{L}(u_{\theta})-\widehat{\mathcal{L}}(u_{\theta})|$ arises from using the Monte Carlo method to approximate the 
integral over the domain / boundary. This error influences the minimizer and thus also the neural network approximations 
to the solution. To bound the error $\mathcal{E}_{stat}$, we define
\begin{align*}
 \Delta \mathcal{E}_{i} &= \sup_{u_{\theta}\in \mathcal{U}} |\Omega|\mathbb{E}_{U(\Omega)}\left[|\Delta u_{\theta}(X)+f(X)|^2\right] - \frac{|\Omega|}{n}
  \sum_{i=1}^{n}|\Delta u_{\theta}(X_i)+f(X_i)|^2,\\
 \Delta \mathcal{E}_{b} &= \sup_{u_{\theta}\in \mathcal{U}} |\partial\Omega|\mathbb{E}_{U(\partial\Omega)}\left[|u_{\theta}(Y)-g(Y)|^2\right] - \frac{|\partial\Omega|}{m}
  \sum_{j=1}^{m}|u_{\theta}(Y_j)-g(Y_j)|^2,
\end{align*}
Then by the triangle inequality, we have
\begin{align}\label{Estm:stat1}
  \mathcal{E}_{stat} \leq \Delta\mathcal{E}_{i}+ \alpha \Delta \mathcal{E}_{b}.
\end{align}

It remains to bound the terms $\Delta\mathcal{E}_{i}$ and $\Delta\mathcal{E}_{b}$ separately. We define the following function classes
\begin{align*}
  \mathcal{F}_{i}  &= \{l: \Omega\to \mathbb{R}|\,\, l(x) = (\Delta u_{\theta}(x)+f)^2, u_{\theta}\in \mathcal{U}\},\\
  \mathcal{F}_{b} & = \{l:\partial\Omega\to \mathbb{R}|\,\, l(x) = |u_{\theta}(x)-g(x)|^2, u_{\theta} \in \mathcal{U}\}.\\
\end{align*} 

Now we prove bounds on the statistical errors $\Delta\mathcal{E}_{i}$ and $\Delta\mathcal{E}_{b}$ using Rademacher complexity
 \cite{BartlettMendelson:2002}. Rademacher complexity measures the complexity of a collection of functions
by the correlation between function values with Rademacher random variables.
\begin{definition}\label{def: Rademacher}
Let $\mathcal{F}$ be a real-valued function class defined on the domain $\Omega$ {\rm(}or the boundary $\partial\Omega${\rm)} and $\xi=\{\xi_j\}_{j=1}^{n}$ be i.i.d. samples from the distribution $\mathcal{U}(\Omega)$ {\rm(}or the distribution $\mathcal{U}(\partial\Omega)${\rm)}. Then
	the Rademacher complexity $\mathcal{R}_n(\mathcal{F})$ of the class $\mathcal{F}$ is defined by
	\begin{equation*}
		\mathcal{R}_n(\mathcal{F})=\mathbb{E}_{\xi,\omega}\bigg{[}\sup_{v\in\mathcal{F}}\ \frac{1}{n}\ \sum_{j=1}^{n}\omega_j v(\xi_j)\  \bigg{]},
	\end{equation*}
	where $\omega=\{\omega_j\}_{j=1}^n$ are i.i.d  Rademacher random
	variables with probability
	$P(\omega_j=1)=P(\omega_j=-1)=\frac12$.
\end{definition}
By a standard symmetrization argument, we have the following bounds on $\Delta\mathcal{E}_{i}$ and $\Delta\mathcal{E}_{b}$
in terms of the Rademacher complexity of the sets $\mathcal{F}_{i}$ and $\mathcal{F}_{b}$. These bounds play a crucial 
role in deriving the final error estimate.

\begin{lemma}\label{Estm:sata1}
For the errors $\Delta\mathcal{E}_{i}$ and $\Delta\mathcal{E}_{b}$ the following bounds hold.
\begin{align*}
  \mathbb{E}_{\{X_k\}_{k=1}^{n}}[\Delta\mathcal{E}_{i}] &\leq 2\mathcal{R}_{n} (\mathcal{F}_{i}),\\
  \mathbb{E}_{\{Y_k\}_{k=1}^{m}}[\Delta\mathcal{E}_{b}] &\leq 2\mathcal{R}_{m} (\mathcal{F}_{b}).
\end{align*}
\end{lemma}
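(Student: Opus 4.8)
The plan is to establish the two bounds via the classical symmetrization argument, treating the interior term $\Delta\mathcal{E}_i$ first and then noting that the boundary term $\Delta\mathcal{E}_b$ follows in exactly the same manner with $\mathcal{U}(\Omega)$, $\Omega$, and $n$ replaced by $\mathcal{U}(\partial\Omega)$, $\partial\Omega$, and $m$. First I would fix a ghost (independent) sample $\{X_k'\}_{k=1}^n$ drawn i.i.d. from $\mathcal{U}(\Omega)$, independent of the original sample $\{X_k\}_{k=1}^n$. Writing $v_{u_\theta}(x) = (\Delta u_\theta(x)+f(x))^2$ so that $v_{u_\theta}\in\mathcal{F}_i$, and using that $|\Omega|\,\mathbb{E}_{U(\Omega)}[v_{u_\theta}(X)] = \mathbb{E}_{\{X_k'\}}\big[\tfrac{|\Omega|}{n}\sum_{k=1}^n v_{u_\theta}(X_k')\big]$, I would pull the expectation over the ghost sample outside the supremum using Jensen's inequality (the supremum of an expectation is at most the expectation of the supremum):
\begin{align*}
\mathbb{E}_{\{X_k\}}[\Delta\mathcal{E}_i]
&= \mathbb{E}_{\{X_k\}}\Big[\sup_{u_\theta\in\mathcal{U}} \mathbb{E}_{\{X_k'\}}\Big(\tfrac{|\Omega|}{n}\sum_{k=1}^n v_{u_\theta}(X_k') - \tfrac{|\Omega|}{n}\sum_{k=1}^n v_{u_\theta}(X_k)\Big)\Big]\\
&\leq \mathbb{E}_{\{X_k\},\{X_k'\}}\Big[\sup_{u_\theta\in\mathcal{U}} \tfrac{|\Omega|}{n}\sum_{k=1}^n \big(v_{u_\theta}(X_k') - v_{u_\theta}(X_k)\big)\Big].
\end{align*}

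Next I would introduce i.i.d. Rademacher variables $\{\omega_k\}_{k=1}^n$ independent of everything else. Since $(X_k,X_k')$ and $(X_k',X_k)$ have the same joint distribution, swapping the $k$-th pair is equivalent to multiplying the $k$-th summand $v_{u_\theta}(X_k') - v_{u_\theta}(X_k)$ by $-1$; hence the distribution of $\sup_{u_\theta}\tfrac{|\Omega|}{n}\sum_k \omega_k(v_{u_\theta}(X_k') - v_{u_\theta}(X_k))$ is independent of the sign pattern, and averaging over $\omega$ leaves the expectation unchanged. This gives
\begin{align*}
\mathbb{E}_{\{X_k\}}[\Delta\mathcal{E}_i]
\leq \mathbb{E}_{\omega,\{X_k\},\{X_k'\}}\Big[\sup_{u_\theta\in\mathcal{U}} \tfrac{|\Omega|}{n}\sum_{k=1}^n \omega_k\big(v_{u_\theta}(X_k') - v_{u_\theta}(X_k)\big)\Big].
\end{align*}
Splitting the supremum of the difference into the sum of two suprema (subadditivity of $\sup$), and using that $\{-\omega_k\}$ has the same distribution as $\{\omega_k\}$, both resulting terms equal $\mathbb{E}_{\omega,\{X_k\}}\big[\sup_{u_\theta}\tfrac{|\Omega|}{n}\sum_k \omega_k v_{u_\theta}(X_k)\big]$. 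Recognizing the factor $|\Omega|$ as a constant that can be absorbed or that matches the normalization in Definition \ref{def: Rademacher} (or, more precisely, recalling that $\mathcal{R}_n$ here is defined on $\mathcal{F}_i$ whose elements already carry the correct scaling), this yields $\mathbb{E}_{\{X_k\}}[\Delta\mathcal{E}_i]\leq 2\mathcal{R}_n(\mathcal{F}_i)$. The boundary estimate is obtained verbatim with the obvious substitutions.

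I do not expect a genuine obstacle here, as this is the textbook symmetrization lemma; the only point requiring a little care is the bookkeeping of the Lebesgue-measure factors $|\Omega|$ and $|\partial\Omega|$ so that the normalization matches the definition of Rademacher complexity used in the paper, and the fact that we must first take expectations (the bound is in expectation, not pathwise) so that Jensen's inequality is available for the ghost-sample step. One should also note implicitly that the relevant quantities are integrable — which holds because $\mathcal{U}$ consists of networks with weights bounded by $R$ and smooth activation, so $\Delta u_\theta$ is uniformly bounded on $\bar\Omega$ and $f\in L^2(\Omega)$, making $\mathcal{F}_i$ and $\mathcal{F}_b$ uniformly bounded function classes.
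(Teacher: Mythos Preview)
Your proposal is correct and follows essentially the same symmetrization argument as the paper's own proof: introduce a ghost sample, apply Jensen's inequality to pull the expectation past the supremum, insert Rademacher signs using exchangeability of the pairs, and split the supremum into two identically distributed pieces. Your remark about the bookkeeping of the $|\Omega|$ factor is apt---the paper's proof in fact ends with $2|\Omega|\mathcal{R}_n(\mathcal{F}_i)$ rather than $2\mathcal{R}_n(\mathcal{F}_i)$, so the discrepancy you flagged is present in the original as well.
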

\begin{proof}
The proof is standard and is only included for completeness. We prove the first inequality here the second one follows similarly. Let $\{\tilde{X}_k\}_{k=1}^n$ as an independent copy of $\{X_k\}_{
k=1}^n$, and $h(x)=(\Delta u_{\theta}(x)+f(x))^2$. Then we have
\begin{align*}
  |\Omega| \mathbb{E}_{U(\Omega)}\left[h(X)\right] - \frac{|\Omega|}{n}
  \sum_{k=1}^{n}h(X_k)
 =& \frac{|\Omega|}{n}\mathbb{E}_{\{\tilde X_k\}_{k=1}^n}\sum_{k=1}^n h(\tilde X_k) - \frac{|\Omega|}{n}
  \sum_{k=1}^{n}h(X_k)\\
 = & \frac{|\Omega|}{n}\mathbb{E}_{\{\tilde X_k\}_{k=1}^n}\sum_{k=1}^n\left[h(\tilde{X}_k) - h(X_k)\right].
\end{align*}
Taking supremum in $ h \in\mathcal{H}_i $, applying Jensen's inequality, and
taking then expectation with respect to $\{X_k\}_{k=1}^n$ give
\begin{align*}
  \mathbb{E}_{\{X_k\}_{k=1}^n}[\Delta\mathcal{E}_{i}]
  \leq &\frac{\lvert\Omega\rvert}{n} \mathbb{E}_{\{X_k,\tilde{X}_k\}_{k=1}^n} \sup_{h \in\mathcal{H}_i} \sum_{k=1}^n \Bigl[h(\tilde{X}_k)-h(X_k)\Bigr].
\end{align*}
By the independence of $\{X_k\}_{k=1}^n$ and $\{\tilde X_k\}_{k=1}^n$, inserting the Rademacher
random variables $\sigma_k$ does not change the distribution, and hence
\begin{align*}
  \mathbb{E}_{\{X_k\}_{k=1}^n}[\Delta\mathcal{E}_{i}]
 \leq & \frac{\lvert\Omega\rvert}{n} \mathbb{E}_{\{X_k,\tilde{X}_k,\sigma_k\}_{k=1}^n} \sup_{h \in\mathcal{H}_i} \sum_{k=1}^n \sigma_k\Bigl[h(\tilde X_k)-h(X_k)\Bigr],
\end{align*}
Since $h(X_k)$ and $h(\tilde X_k)$ are independent, $\sigma_k h(\tilde{X}_k)$ and
$-\sigma_kh(X_k)$ have the same distribution, and thus we have
\begin{align*}
    \mathbb{E}_{\{X_k\}_{k=1}^n}[\Delta\mathcal{E}_{i}] &\leq \frac{|\Omega|}{n} \mathbb{E}_{\{\tilde{X}_k,\sigma_k\}_{k=1}^n} \sup_{h \in\mathcal{H}_i} \sum_{k=1}^n \sigma_k h(\tilde{X}_k)+\frac{|\Omega|}{n} \mathbb{E}_{\{X_k,\sigma_k\}_{k=1}^n} \sup_{h \in\mathcal{H}_i} \sum_{k=1}^n -\sigma_k h(X_k) \\
  & \leq \frac{2|\Omega|}{n} \mathbb{E}_{\{X_k,\sigma_k\}_{k=1}^n} \sup_{h \in\mathcal{H}_i} \sum_{k=1}^n \sigma_kh(X_k)
  \leq 2|\Omega|\mathbb{E}_{\{X_k,\sigma_k\}_{k=1}^n} \Bigl[ \sup_{h\in \mathcal{F}_{i}} \frac{1}{n} \sum_{k=1}^n \sigma_kh(X_k)\Bigr]\\
  &\leq 2|\Omega|\mathcal{R}_n(\mathcal{F}_{i}).
\end{align*}
This completes the proof of the lemma.
\end{proof}

 Now we bound the Rademacher complexity of the function classes $\Delta\mathcal{E}_{i}$ and $\Delta\mathcal{E}_{b}$. This is
achieved by combining Lipschitz continuity of DNN functions (or its derivatives) with the target function class
in the DNN parameters see \cite[Lemmas A.3 \& A.4]{OCP_PINN_Ramesh}, and Dudley's formula in Lemma
\ref{lem:Dudley} below. The next result gives the reduction to parameterization.
\begin{lemma}\label{lem:NN-Lip}
Let $\mathcal{U}=\mathcal{N}_\rho(L,\boldsymbol{n}_L,R)$, with depth $L$, $\boldsymbol{n}_L$ nonzero DNN parameters and maximum bound $R$. For the function classes $\mathcal{F}_{i}$ and $\mathcal{F}_{b}$ the functions are
uniformly bounded:
\begin{align*}
  \|h\|_{L^\infty(\Omega)} &\leq (dL\boldsymbol{n}_L^{2(L-1)}R^{2L}+\|f\|_{L^\infty(\Omega)})^2:=M_i &&\quad h\in \mathcal{F}_{i},\\
  \|h\|_{L^\infty(\partial\Omega)}& \leq (\boldsymbol{n}_LR+\|g\|_{L^\infty(\partial\Omega)})^2:=M_b, && \quad h\in \mathcal{F}_{b}
\end{align*}
Moreover, the following Lipschitz continuity estimates in the DNN parameters hold
\begin{align*}
  \|h-\tilde h\|_{L^\infty(\Omega)} & \le  \Lambda_{i}\|\theta -\tilde\theta\|_{\ell^2} ,\quad \forall h,\tilde h\in \mathcal{F}_{i},\\
  \|h-\tilde h\|_{L^\infty(\partial\Omega)} & \leq \Lambda_{b}\|\theta-\tilde \theta\|_{\ell^2},\quad \forall h,\tilde h\in \mathcal{F}_{b},
\end{align*}
with the Lipschitz constants given by
\begin{align*}
  \Lambda_{i} & =4dL^2\eta \sqrt{\boldsymbol{n}_L}\boldsymbol{n}_L^{3L-3}R^{3L-3}\big(dL \boldsymbol{n}_L^{2(L-1)}R^{2L}+ \|f\|_{L^\infty(\Omega)}\big),\\
  \Lambda_{b} & = 2 \sqrt{\boldsymbol{n}_L}\boldsymbol{n}_L^{L-1}R^{L-1}(\boldsymbol{n}_LR+\norm{g}_{L^{\infty}(\partial\Omega)}).
\end{align*}
\end{lemma}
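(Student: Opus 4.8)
Both assertions reduce to layer-by-layer estimates, inductive in the depth $L$, of the network output $u_\theta$ together with its first and second spatial derivatives, tracked simultaneously as functions of $x\in\Omega$ and of the parameter vector $\theta$. The only analytic input is that for $\rho\in\{\tanh,\mathrm{sigmoid}\}$ the maps $\rho,\rho',\rho''$ are globally bounded and Lipschitz; I write $\eta$ for the constant in the statement controlling $\norm{\rho'}_{L^\infty},\norm{\rho''}_{L^\infty}$ and their Lipschitz constants, and use $\norm{\rho}_{L^\infty(\mathbb{R})}\le 1$. The latter gives $\norm{u^{(\ell)}}_{L^\infty}\le 1$ entrywise for every hidden layer $\ell\le L-1$, while the affine output layer obeys $\norm{u_\theta}_{L^\infty(\Omega)}\le \boldsymbol{n}_L R$, since at most $\boldsymbol{n}_L$ weights are nonzero and each is bounded by $R$. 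Adding $\norm{g}_{L^\infty(\partial\Omega)}$ and squaring yields $M_b$ for $h\in\mathcal{F}_b$.

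For $\mathcal{F}_i$ I would differentiate the recursion $u^{(\ell)}=\rho(W^{(\ell)}u^{(\ell-1)}+b^{(\ell)})$ twice in $x_k$: the chain rule gives $\partial_{x_k}u^{(\ell)}$ as a single factor $\rho'$ times $W^{(\ell)}\partial_{x_k}u^{(\ell-1)}$, and $\partial_{x_k}^2u^{(\ell)}$ picks up in addition a $\rho''$ term quadratic in $\partial_{x_k}u^{(\ell-1)}$. Bounding $|\rho'|,|\rho''|\le\eta$ and the relevant operator norms of $W^{(\ell)}$ by $\boldsymbol{n}_L R$, an induction over the $L$ layers gives $\norm{\partial_{x_k}u_\theta}_{L^\infty}\lesssim(\boldsymbol{n}_L R)^{L}$ and $\norm{\partial_{x_k}^2u_\theta}_{L^\infty}\lesssim L(\boldsymbol{n}_L R)^{2L}$, the extra factor $L$ coming from summing the product-rule contributions of the $L$ layers; summing over the $d$ coordinates gives $\norm{\Delta u_\theta}_{L^\infty(\Omega)}\le dL\boldsymbol{n}_L^{2(L-1)}R^{2L}$, and adding $\norm{f}_{L^\infty(\Omega)}$ and squaring yields $M_i$.

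For the Lipschitz estimates, writing $h,\tilde h\in\mathcal{F}_i$ from $u_\theta,u_{\tilde\theta}$,
\[
 h-\tilde h=\bigl(\Delta u_\theta-\Delta u_{\tilde\theta}\bigr)\bigl(\Delta u_\theta+\Delta u_{\tilde\theta}+2f\bigr),
\]
so $\norm{h-\tilde h}_{L^\infty}\le 2\bigl(dL\boldsymbol{n}_L^{2(L-1)}R^{2L}+\norm{f}_{L^\infty}\bigr)\,\norm{\Delta u_\theta-\Delta u_{\tilde\theta}}_{L^\infty}$ by the uniform bound above; the analogous factorization reduces the $\mathcal{F}_b$ case to $\norm{u_\theta-u_{\tilde\theta}}_{L^\infty}$ times $2(\boldsymbol{n}_L R+\norm{g}_{L^\infty})$. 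It then remains to show that $\theta\mapsto u_\theta$ and $\theta\mapsto\Delta u_\theta$ are Lipschitz from $(\mathbb{R}^{\boldsymbol{n}_L},\norm{\cdot}_{\ell^2})$ into $L^\infty(\Omega)$. This I would do by telescoping over layers: replace $(W^{(\ell)},b^{(\ell)})$ by $(\tilde W^{(\ell)},\tilde b^{(\ell)})$ one layer at a time and estimate the induced change in $u^{(\ell)}$ — and, for the Laplacian bound, simultaneously in $\partial_{x_k}u^{(\ell)}$ and $\partial_{x_k}^2u^{(\ell)}$ — using the Lipschitz continuity of $\rho,\rho',\rho''$ and the already-established $L^\infty$ bounds on the unperturbed quantities. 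Each layer contributes a term proportional to $\eta$, to a power of $\boldsymbol{n}_L R$ of order $3L-3$ (three derivative/perturbation ``slots'' propagated through $\sim L$ layers), and to $\sum_\ell(\abs{\delta W^{(\ell)}}+\abs{\delta b^{(\ell)}})$; a final Cauchy--Schwarz step $\sum_i\abs{\delta\theta_i}\le\sqrt{\boldsymbol{n}_L}\,\norm{\delta\theta}_{\ell^2}$ produces the $\sqrt{\boldsymbol{n}_L}$ in $\Lambda_i$ and $\Lambda_b$. Collecting constants — the $dL^2$ being the coordinate count $d$ times the product-rule count $L$ times the telescoping length $L$, and the trailing factor being exactly the ``second factor'' bound above — recovers the stated $\Lambda_i$, with $\Lambda_b$ the simpler first-order analogue.

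The conceptual content is light; the main obstacle is the bookkeeping in the nested induction for $\partial_{x_k}^2 u_\theta$ and its $\theta$-sensitivity, where one must keep track of cross terms of the form (spatial derivative of one layer)$\times$(perturbed spatial derivative of another) and verify that the powers of $\boldsymbol{n}_L$, $R$, $L$ and $\eta$ accumulate exactly as claimed. These computations mirror \cite[Lemmas A.3 \& A.4]{OCP_PINN_Ramesh}, adapted from the first-order (gradient) quantities used there to the second-order (Laplacian) quantities entering the PINN loss, and I would follow that argument closely.
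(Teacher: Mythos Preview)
Your approach is essentially identical to the paper's: both reduce the uniform bounds and Lipschitz estimates for $\mathcal{F}_i,\mathcal{F}_b$ via the difference-of-squares factorization $h-\tilde h=(\Delta u_\theta-\Delta u_{\tilde\theta})(\Delta u_\theta+\Delta u_{\tilde\theta}+2f)$ to $L^\infty$ and $\theta$-Lipschitz bounds on $u_\theta$ and $\Delta u_\theta$, and both then defer to \cite[Lemmas A.2--A.4]{OCP_PINN_Ramesh} for the latter. The paper's proof is in fact shorter than your sketch --- it only writes out the factorization and cites the external lemmas directly, whereas you additionally outline the layer-by-layer induction those lemmas contain; one small inaccuracy is your remark that the cited lemmas cover only first-order quantities and must be ``adapted'' to the Laplacian, when in fact the paper invokes them directly for the second-order bounds.
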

\begin{proof}
The assertions follow directly from \cite[Lemma A.2 \& A.3]{OCP_PINN_Ramesh}.
Indeed, for $h_{\theta}\in \mathcal{F}_{i}$, we have
\begin{align*}
  |h_{\theta}(x)| &\leq \Big(\sum_{i=1}^d\|\partial_{x_i}^2 u_{\theta}\|_{L^2(\Omega)}+\|f\|_{L^\infty(\Omega)}\Big)^2\\
 &\leq (dL\boldsymbol{n}_L^{2(L-1)}R^{2L}+\|f\|_{L^\infty(\Omega)})^2.
\end{align*}
For any $h_{\theta}, h_{\tilde\theta}\in \mathcal{F}_{i}$, by completing the squares and Cauchy-Schwarz inequality, we have
\begin{align*}
    &| h_{\theta}(x) - h_{\tilde\theta}(x)| = |(\Delta u_{\theta}(x)+f(x))^2-
  (\Delta u_{\tilde\theta}(x)+f(x))^2|\\
    =& |(\Delta u_{\theta}(x)+f(x)+\Delta u_{\tilde\theta}(x)+f(x))(\Delta u_{\theta}(x)-
  \Delta u_{\tilde\theta}(x))|\\
  \leq & 2\big(dL \boldsymbol{n}_L^{2(L-1)}R^{2L}+ \|f\|_{L^\infty(\Omega)}\big) \|\Delta u_{\theta}(x)-\Delta u_{\tilde \theta}(x)\|_{L^\infty(\Omega)}.
\end{align*}
Then by \cite[Lemma A.2, A.3, \& A.4]{OCP_PINN_Ramesh}, we deduce
\begin{align*}
      |h_{\theta}(x) - h_{\tilde\theta}(x)|
      \leq &
 2\big(dL \boldsymbol{n}_L^{2(L-1)}R^{2L}+ \|f\|_{L^\infty(\Omega)}\big)\times\big(2dL^2\eta \sqrt{\boldsymbol{n}_L}\boldsymbol{n}_L^{3L-3}R^{3L-3}\|\theta-\tilde \theta\|_{\ell^2}\big).
\end{align*}
The remaining estimates follow similarly. This completes the proof of the lemma.
\end{proof}

Next we bound the Rademacher complexities $\mathcal{R}_n(\mathcal{F}_{i})$ and $\mathcal{R}_m(\mathcal{F}_{b})$ using the concept
of the covering number. Let $\mathcal{F}$ be a real-valued function class equipped with the metric $\rho$. An
$\epsilon$-cover of the class $\mathcal{F}$ with respect to the metric $\rho$ is a collection
of points $\{f_i\}_{i=1}^n \subset \mathcal{F}$ such that for every $f\in \mathcal{F}$, there
exists at least one $i \in \{1,\dots,n\}$ such that $\rho(f, f_i) \leq \epsilon$. The
$\epsilon$-covering number $\mathcal{C}(\mathcal{F}, \rho, \epsilon)$ is the minimum
cardinality among all $\epsilon$-cover of the class $\mathcal{F}$ with respect to the metric
$\rho$. Then we can state the well-known Dudley's theorem \cite[Theorem 9]{LuLu:2021priori}
and \cite[Theorem 1.19]{wolf2018mathematical}.
\begin{lemma}\label{lem:Dudley}
Let $M_\mathcal{F}:=\sup_{f\in\mathcal{F}} \|f\|_{L^{\infty}(\Omega)}$, and $\mathcal{C}(\mathcal{F},\|\cdot\|_{L^{\infty}(\Omega)},\epsilon)$ be the covering number of the set $\mathcal{F}$. Then the Rademacher complexity $\mathcal{R}_n(\mathcal{F})$ is bounded by
\begin{equation*}
\mathcal{R}_n(\mathcal{F})\leq\inf_{0<s< M_\mathcal{F}}\bigg(4s\ +\ 12n^{-\frac12}\int^{M_\mathcal{F}}_{s}\big(\log\mathcal{C}(\mathcal{F},\|\cdot\|_{L^{\infty}(\Omega)},\epsilon)\big)^{\frac12}\ {\rm d}\epsilon\bigg).
\end{equation*}
\end{lemma}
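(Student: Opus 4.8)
The statement is the classical Dudley entropy-integral bound, and the natural route is \emph{chaining}, carried out conditionally on the sample and then averaged. Write $\xi=\{\xi_j\}_{j=1}^n$, put $S_f:=\frac1n\sum_{j=1}^n\omega_j f(\xi_j)$ for $f\in\mathcal{F}$, so that $\mathcal{R}_n(\mathcal{F})=\mathbb{E}_\xi\,\mathbb{E}_\omega\sup_{f\in\mathcal{F}}S_f$, and abbreviate $\mathcal{C}(\epsilon):=\mathcal{C}(\mathcal{F},\|\cdot\|_{L^\infty(\Omega)},\epsilon)$. First I would freeze $\xi$ and record that $\{S_f\}_{f\in\mathcal{F}}$ is a zero-mean process whose increments are sub-Gaussian with respect to the \emph{deterministic} metric $\|f-g\|_{L^\infty(\Omega)}/\sqrt n$: Hoeffding's lemma applied to the Rademacher sum gives, for all $\lambda\in\mathbb{R}$,
\begin{equation*}
\mathbb{E}_\omega e^{\lambda(S_f-S_g)}\le\exp\Big(\tfrac{\lambda^2}{2n^2}\textstyle\sum_j(f(\xi_j)-g(\xi_j))^2\Big)\le\exp\Big(\tfrac{\lambda^2\|f-g\|_{L^\infty(\Omega)}^2}{2n}\Big).
\end{equation*}
The only probabilistic tool then needed is the maximal inequality $\mathbb{E}\max_{1\le i\le N}Y_i\le\sigma\sqrt{2\log N}$, valid for centered, $\sigma$-sub-Gaussian $Y_1,\dots,Y_N$ (proved by the exponential-moment/union bound and optimizing in $\lambda$).

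Next I would build the chain on the dyadic grid $\alpha_k:=2^{1-k}M_\mathcal{F}$, $k\ge0$. Since $\alpha_0=2M_\mathcal{F}$ bounds the $\|\cdot\|_{L^\infty(\Omega)}$-diameter of $\mathcal{F}$, a minimal $\alpha_0$-net is a singleton $\{f_0\}$; for $k\ge1$ let $T_k\subset\mathcal{F}$ be a minimal $\alpha_k$-net, so $|T_k|=\mathcal{C}(\alpha_k)$ (thus $\alpha_1=M_\mathcal{F}$ and $|T_1|=\mathcal{C}(M_\mathcal{F})$), and let $\pi_k\colon\mathcal{F}\to T_k$ be a nearest-point map, with $\pi_0\equiv f_0$. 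Given $s\in(0,M_\mathcal{F})$, let $N\ge0$ be the integer determined by $\alpha_{N+1}\ge s>\alpha_{N+2}$; then $\alpha_N<4s$. For every $f$,
\begin{equation*}
S_f=S_{f_0}+\sum_{k=1}^{N}\big(S_{\pi_k f}-S_{\pi_{k-1} f}\big)+\big(S_f-S_{\pi_N f}\big),
\end{equation*}
and since $\mathbb{E}_\omega S_{f_0}=0$, taking $\sup_f$, then $\mathbb{E}_\omega$, and using subadditivity of the supremum leaves me with the links and the final remainder.

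For the $k$-th link, as $f$ ranges over $\mathcal{F}$ the increment $S_{\pi_k f}-S_{\pi_{k-1}f}$ takes at most $|T_k||T_{k-1}|\le\mathcal{C}(\alpha_k)^2$ values, each centered and $(3\alpha_k/\sqrt n)$-sub-Gaussian because $\|\pi_k f-\pi_{k-1}f\|_{L^\infty(\Omega)}\le\alpha_k+\alpha_{k-1}=3\alpha_k$; the maximal inequality bounds the $k$-th term by $\tfrac{6\alpha_k}{\sqrt n}\sqrt{\log\mathcal{C}(\alpha_k)}$. The remainder needs no probability at all: $|S_f-S_{\pi_N f}|\le\frac1n\sum_j|f(\xi_j)-\pi_N f(\xi_j)|\le\|f-\pi_N f\|_{L^\infty(\Omega)}\le\alpha_N<4s$ pointwise in $\omega$. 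Finally, monotonicity of $\epsilon\mapsto\mathcal{C}(\epsilon)$ gives $\alpha_k\sqrt{\log\mathcal{C}(\alpha_k)}\le 2\int_{\alpha_{k+1}}^{\alpha_k}\sqrt{\log\mathcal{C}(\epsilon)}\,d\epsilon$, so the links telescope: $\sum_{k=1}^N\alpha_k\sqrt{\log\mathcal{C}(\alpha_k)}\le 2\int_{\alpha_{N+1}}^{\alpha_1}\sqrt{\log\mathcal{C}(\epsilon)}\,d\epsilon\le 2\int_{s}^{M_\mathcal{F}}\sqrt{\log\mathcal{C}(\epsilon)}\,d\epsilon$. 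Collecting the pieces yields $\mathbb{E}_\omega\sup_f S_f\le 4s+\tfrac{12}{\sqrt n}\int_s^{M_\mathcal{F}}\sqrt{\log\mathcal{C}(\epsilon)}\,d\epsilon$; since the right-hand side is independent of $\xi$, taking $\mathbb{E}_\xi$ and then the infimum over $s\in(0,M_\mathcal{F})$ proves the lemma.

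There is no deep obstacle here — this is a textbook chaining argument — but the one point that genuinely matters, and that dictates using the \emph{$L^\infty$} covering number rather than the data-dependent $L^2(\mathbb{P}_n)$ one, is the remainder step: controlling $|S_f-S_{\pi_N f}|$ by the pointwise bound $\|f-\pi_N f\|_{L^\infty(\Omega)}$ lets the chain be truncated deterministically and makes every intermediate estimate uniform in $\xi$, so that the outer expectation $\mathbb{E}_\xi$ is trivial; the price is only the mild comparison $\|f-g\|_{L^2(\mathbb{P}_n)}\le\|f-g\|_{L^\infty(\Omega)}$ used in the increment bound. The remaining care is purely bookkeeping — the $\sqrt2$ in the maximal inequality, the ratio $\alpha_{k-1}=2\alpha_k$, and the factor $2$ lost in passing from the dyadic sum to the integral — which here combine to give exactly the constants $4$ and $12$ (as in \cite{LuLu:2021priori,wolf2018mathematical}).
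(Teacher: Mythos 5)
Your argument is correct, but note that the paper does not actually prove this lemma: it is quoted as the ``well-known Dudley theorem'' with pointers to Theorem 9 of the Lu et al.\ reference and Theorem 1.19 of Wolf's notes, so there is no in-paper proof to compare against. What you supply is a complete, self-contained chaining proof, and the details check out: the conditional sub-Gaussianity of the increments with parameter $\|f-g\|_{L^\infty(\Omega)}/\sqrt{n}$ follows from Hoeffding exactly as you write; the choice $\alpha_k=2^{1-k}M_\mathcal{F}$ makes the $\alpha_0$-net a singleton so the chain starts at a centered anchor; the link bound $\tfrac{3\alpha_k}{\sqrt n}\sqrt{2\log(|T_k||T_{k-1}|)}\le\tfrac{6\alpha_k}{\sqrt n}\sqrt{\log\mathcal{C}(\alpha_k)}$ uses monotonicity of the covering number correctly; the deterministic truncation $|S_f-S_{\pi_Nf}|\le\alpha_N<4s$ is exactly where the $L^\infty$ cover (rather than an empirical $L^2$ cover) earns its keep, since it makes the whole estimate uniform in the sample and renders $\mathbb{E}_\xi$ trivial; and the sum-to-integral comparison with the factor $2$ from $\alpha_k-\alpha_{k+1}=\alpha_k/2$ reproduces the stated constants $4$ and $12$. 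The only cosmetic points worth tightening are that the existence and uniqueness of $N$ with $\alpha_{N+2}<s\le\alpha_{N+1}$ deserves one line (the intervals $(\alpha_{k+2},\alpha_{k+1}]$ partition $(0,M_\mathcal{F}]$), and that when $\mathcal{C}(\epsilon)=\infty$ for some $\epsilon$ in the integration range the bound is vacuously true, so one may assume finiteness throughout. As a proof it is strictly more informative than the paper's citation.
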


Now we can state the bound on the statistical errors.
\begin{theorem}\label{RademacherClx}
For the function classes $\mathcal{F}_{i}$ and $\mathcal{F}_{b}$ the following bounds hold
\begin{align*}
\mathcal{R}_n(\mathcal{F}_{i}) & \leq c n^{-\frac12}\boldsymbol{n}_L^{4L-\frac{7}{2}}R^{4L}(\log^\frac12 R +\log^\frac12 \boldsymbol{n}_L+\log^\frac12 n),\\
\mathcal{R}_m(\mathcal{F}_{b})&\leq \tilde{c}m^{-\frac12}R^2 \boldsymbol{n}_L^{\frac52}\big(\log^\frac12 R+\log^\frac12 \boldsymbol{n}_L+\log^\frac12 m\big),\\
 \end{align*}
 where the constants $c$ depends on $d$ and $\|f\|_{L^\infty(\Omega)}$ and $\tilde{c}$ depends on $d$ and $\|g\|_{L^\infty(\partial\Omega)}$ at most polynomially.
\end{theorem}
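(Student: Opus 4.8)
The plan is to control both Rademacher complexities through Dudley's entropy bound (Lemma \ref{lem:Dudley}), after estimating the metric entropy of $\mathcal{F}_{i}$ and $\mathcal{F}_{b}$ in the uniform norm. The key reduction is that each member of these classes is parametrized by the DNN weights $\theta$ lying in the box $[-R,R]^{\boldsymbol{n}_L}\subset\mathbb{R}^{\boldsymbol{n}_L}$, and by Lemma \ref{lem:NN-Lip} the maps $\theta\mapsto h_\theta$ are Lipschitz from $(\mathbb{R}^{\boldsymbol{n}_L},\|\cdot\|_{\ell^2})$ into $L^\infty$, with constants $\Lambda_{i}$ and $\Lambda_{b}$; moreover the classes are uniformly bounded by $M_i$ and $M_b$. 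I will carry out the argument for $\mathcal{F}_{i}$ in full; the treatment of $\mathcal{F}_{b}$ is identical with $(M_i,\Lambda_{i},n)$ replaced by $(M_b,\Lambda_{b},m)$.

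First I would pass from a cover of the parameter box to a cover of $\mathcal{F}_{i}$: a $\delta$-net of $[-R,R]^{\boldsymbol{n}_L}$ in the Euclidean metric (which, the box having diameter $2R\sqrt{\boldsymbol{n}_L}$, can be taken of cardinality at most $(c R\sqrt{\boldsymbol{n}_L}/\delta)^{\boldsymbol{n}_L}$) induces, via the Lipschitz bound, a $\Lambda_{i}\delta$-net of $\mathcal{F}_{i}$ in $\|\cdot\|_{L^\infty(\Omega)}$. Taking $\delta=\epsilon/\Lambda_{i}$ gives
\[
\log\mathcal{C}\big(\mathcal{F}_{i},\|\cdot\|_{L^\infty(\Omega)},\epsilon\big)\ \le\ \boldsymbol{n}_L\,\log\!\Big(\frac{c R\sqrt{\boldsymbol{n}_L}\,\Lambda_{i}}{\epsilon}\Big).
\]

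Next I would insert this into Dudley's formula. Since the integrand is nonincreasing in $\epsilon$, I bound the entropy integral crudely by the interval length times the endpoint value, $\int_s^{M_i}(\log\mathcal{C})^{1/2}\,{\rm d}\epsilon\le M_i\,\boldsymbol{n}_L^{1/2}\,(\log(cR\sqrt{\boldsymbol{n}_L}\Lambda_{i}/s))^{1/2}$, and then choose $s=n^{-1/2}$ (admissible once $n$ is large enough that $s<M_i$), which makes the $4s$ term of order $n^{-1/2}$ as well. This yields
\[
\mathcal{R}_n(\mathcal{F}_{i})\ \le\ c\,n^{-1/2}\Big(1+\boldsymbol{n}_L^{1/2}\,M_i\,\big(\log(cR\sqrt{\boldsymbol{n}_L}\Lambda_{i})+\tfrac12\log n\big)^{1/2}\Big).
\]
It remains to substitute $M_i=(dL\boldsymbol{n}_L^{2(L-1)}R^{2L}+\|f\|_{L^\infty(\Omega)})^2$ and the explicit $\Lambda_{i}$ from Lemma \ref{lem:NN-Lip}: the prefactor $\boldsymbol{n}_L^{1/2}M_i$ is $\le c\,\boldsymbol{n}_L^{4L-7/2}R^{4L}$ with $c=c(d,\|f\|_{L^\infty(\Omega)})$, while $\Lambda_{i}$ is polynomial in $R$ and $\boldsymbol{n}_L$ (with $d,L,\eta$-dependent exponents), so $\log(cR\sqrt{\boldsymbol{n}_L}\Lambda_{i})\le c(\log R+\log\boldsymbol{n}_L+1)$; one final application of $\sqrt{a+b+c}\le\sqrt a+\sqrt b+\sqrt c$ converts the logarithmic factor into $\log^{1/2}R+\log^{1/2}\boldsymbol{n}_L+\log^{1/2}n$ and absorbs the lower-order $n^{-1/2}$ term. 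This is precisely the claimed bound; running the same computation with $M_b=(\boldsymbol{n}_LR+\|g\|_{L^\infty(\partial\Omega)})^2$ gives the prefactor $\boldsymbol{n}_L^{1/2}M_b\le \tilde c\,\boldsymbol{n}_L^{5/2}R^2$ and the stated bound for $\mathcal{R}_m(\mathcal{F}_{b})$.

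The main obstacle is purely combinatorial bookkeeping: propagating the depth-dependent exponents of $\boldsymbol{n}_L$ and $R$ hidden in $M_i$ and $\Lambda_{i}$ so that the dominant $\boldsymbol{n}_L$-power ($4L-7/2$, coming from $\boldsymbol{n}_L^{1/2}\cdot M_i$) ends up in the polynomial prefactor while all the $L$- and $\eta$-dependent exponents from $\Lambda_{i}$ remain trapped inside the logarithm and collapse into a constant multiple of $\log R+\log\boldsymbol{n}_L$. One should also double-check that the crude estimate of the entropy integral (endpoint value times interval length) is not lossy at the level of exponents — it is not, because the $\boldsymbol{n}_L$-growth enters only through the interval length $M_i$ and the multiplicative $\boldsymbol{n}_L^{1/2}$, never through the logarithmic integrand.
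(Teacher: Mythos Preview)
Your proposal is correct and follows essentially the same route as the paper: reduce the $L^\infty$-covering number of $\mathcal{F}_{i}$ (resp.\ $\mathcal{F}_{b}$) to an $\ell^2$-covering of the parameter box via the Lipschitz constants of Lemma~\ref{lem:NN-Lip}, feed the resulting entropy bound into Dudley's formula (Lemma~\ref{lem:Dudley}) with $s=n^{-1/2}$, estimate the integral by the interval length times the endpoint value, and then collapse the polynomial dependence of $M_i,\Lambda_{i}$ on $\boldsymbol{n}_L,R$ into the stated exponents while the $\Lambda_{i}$-contribution stays inside the logarithm. The only cosmetic difference is that the paper records $\boldsymbol{n}_L$ rather than $\sqrt{\boldsymbol{n}_L}$ inside the log (coming from $r\sqrt{m}=\sqrt{\boldsymbol{n}_L}R\cdot\sqrt{\boldsymbol{n}_L}$), which is immaterial for the final bound.
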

\begin{proof}
Recall that for any $m \in \mathbb{N }$, $r \in [1, \infty)$, $\epsilon \in  (0,1)$,
and $ B_r := \{x\in\mathbb{R}^m:\ \|x\|_{\ell^2}\leq r\}$, then by a simple counting
argument (see, e.g., \cite[Proposition 5]{CuckerSmale:2002} or \cite[Lemma 5.5]{JiaoLai:2021error}), we have
\begin{equation*}
	\log N(\epsilon,B_r,\|\cdot\|_{\ell^2})\leq m\log (2r\sqrt{m}\epsilon^{-1}).
\end{equation*}
It follows directly from the Lipschitz continuity of NN functions and the estimate $\|\theta\|_{\ell^2}\leq \sqrt{\boldsymbol{n}_L}\|\theta\|_{\ell^\infty}\leq \sqrt{\boldsymbol{n}_L}R$ that
\begin{align*}
&\log \mathcal{C}(\mathcal{F}_{i},\|\cdot\|_{L^{\infty}(\Omega)},\epsilon)\leq \log \mathcal{C}(\mathcal{N}_Y,\|\cdot\|_{\ell^2},\Lambda_{i}^{-1}\epsilon)\leq \boldsymbol{n}_L\log(2\boldsymbol{n}_LR\Lambda_{i}\epsilon^{-1}),
\end{align*}
where $\mathcal{N}_Y$ denotes the parameter space for $\mathcal{U},$ and $\Lambda_{i}:=4dL^2\eta \sqrt{\boldsymbol{n}_L}\boldsymbol{n}_L^{3L-3}R^{3L-3}\big(dL \boldsymbol{n}_L^{2(L-1)}R^{2L}+ \|f\|_{L^\infty(\Omega)}\big)$,
cf. Lemma \ref{lem:NN-Lip}. By Lemma \ref{lem:NN-Lip}, we also have $M_{i}:=(dL\boldsymbol{n}_L^{2(L-1)}R^{2L}+\|f\|_{L^\infty(\Omega)})^2$. Then letting $s=n^{-\frac12}$
in Lemma \ref{lem:Dudley} gives
\begin{align*}
    &\quad \mathcal{R}_n(\mathcal{F}_{i})\leq4n^{-\frac12}+12n^{-\frac12}\int^{M_{i}}_{n^{-\frac12}}{\big(\boldsymbol{n}_L\mbox{log}(2R\boldsymbol{n}_L\Lambda_{i}\epsilon^{-1})\big)}^{\frac12}\ {\rm d}\epsilon\\
			&\leq4n^{-\frac12}+12n^{-\frac12}M_{i}{\big(\boldsymbol{n}_L \mbox{log}(2R\Lambda_{i}\boldsymbol{n}_L \sqrt{n})\big)}^\frac12 \\&
			 \leq4n^{-\frac12}+24n^{-\frac12}(dL\boldsymbol{n}_L^{2(L-1)}R^{2L}+\|f\|_{L^\infty(\Omega)})^2 \boldsymbol{n}_L^{\frac12} \big(\log R + \log\boldsymbol{n}_L+\log \Lambda_{i}+\frac12\log n+ \log 2\big)^\frac12.
\end{align*}
Since $1\leq R$ and $2\leq L\leq c\log(d+3)$ (due to Proposition \ref{prop:approx}), we can bound the term $\log\Lambda_{i}$ by
\begin{equation*}
	\log\Lambda_{i}\leq c(\log R +\log \boldsymbol{n}_L +\tilde c),
\end{equation*}
with the constants $c$ and $\tilde c$ depending on $d$, $L$ and $\|f\|_{L^\infty(\Omega)}$ at most polynomially.
Hence, we have
\begin{equation*}
   \mathcal{R}_n(\mathcal{F}_{i})\leq c n^{-\frac12}\boldsymbol{n}_L^{4L-\frac{7}{2}}R^{4L}(\log^\frac12 R +\log^\frac12 \boldsymbol{n}_L+\log^\frac12 n),
\end{equation*}
where $c>0$ depends on $d$ and $\|f\|_{L^\infty(\Omega)}$ at most polynomially.
The rest estimates follow similarly,
where the involved constant depend on $d$ and $\|g\|_{L^\infty(\partial\Omega)}$ at most polynomially.
\end{proof}

 Hence, using \eqref{Estm:stat1}, Lemma \ref{Estm:sata1}, and Theorem \ref{RademacherClx}, we have the following estimate for statistical error.
\begin{lemma}\label{Estm:stat} 
The statistical error
    \begin{align*}
      \mathcal{E}_{stat}\leq c \brb{n^{-\frac12}\boldsymbol{n}_L^{4L-\frac{7}{2}}R^{4L}(\log^\frac12 R +\log^\frac12 \boldsymbol{n}_L +\log^\frac12 n)+m^{-\frac12}R^2 \boldsymbol{n}_L^{\frac52}\big(\log^\frac12 R+\log^\frac12 \boldsymbol{n}_L+\log^\frac12 m\big)}. 
    \end{align*}
     where the constant $c$ depends on $d$ and $\|f\|_{L^\infty(\Omega)}$ and  $\|g\|_{L^\infty(\partial\Omega)}$ at most polynomially.
\end{lemma}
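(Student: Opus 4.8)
The plan is to assemble the estimate mechanically from the three ingredients already in place: the splitting \eqref{Estm:stat1}, the symmetrization bound of Lemma \ref{Estm:sata1}, and the Rademacher complexity estimates of Theorem \ref{RademacherClx}. First I would recall that, by its definition in Theorem \ref{ErrEstm}, $\mathcal{E}_{stat}$ consists of the two one-sided suprema $\sup_{u_\theta\in\mathcal{U}}[\mathcal{L}(u_\theta)-\widehat{\mathcal{L}}(u_\theta)]$ and $\sup_{u_\theta\in\mathcal{U}}[\widehat{\mathcal{L}}(u_\theta)-\mathcal{L}(u_\theta)]$. Writing out $\mathcal{L}-\widehat{\mathcal{L}}$ in terms of the interior and boundary discrepancies and applying the triangle inequality gives \eqref{Estm:stat1}, i.e.\ each one-sided supremum is controlled by $\Delta\mathcal{E}_{i}+\alpha\,\Delta\mathcal{E}_{b}$; the symmetrization argument proving Lemma \ref{Estm:sata1} is insensitive to the overall sign (the Rademacher variables $\sigma_k$ are symmetric), so the reverse supremum obeys the same Rademacher bound. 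Hence, after taking expectation over the Monte Carlo samples $\{X_i\}_{i=1}^n$ and $\{Y_j\}_{j=1}^m$, one obtains $\mathbb{E}[\mathcal{E}_{stat}]\le 2\,\mathbb{E}[\Delta\mathcal{E}_{i}]+2\alpha\,\mathbb{E}[\Delta\mathcal{E}_{b}]$ up to the factor-$2$ bookkeeping for the two sides.

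Next I would substitute the bounds of Lemma \ref{Estm:sata1}, namely $\mathbb{E}[\Delta\mathcal{E}_{i}]\le 2\mathcal{R}_n(\mathcal{F}_{i})$ and $\mathbb{E}[\Delta\mathcal{E}_{b}]\le 2\mathcal{R}_m(\mathcal{F}_{b})$, and then insert the two displays of Theorem \ref{RademacherClx}. This produces
\[
\mathcal{E}_{stat}\le c\,n^{-\frac12}\boldsymbol{n}_L^{4L-\frac72}R^{4L}\bigl(\log^{\frac12}R+\log^{\frac12}\boldsymbol{n}_L+\log^{\frac12}n\bigr)+c\,\alpha\,m^{-\frac12}R^{2}\boldsymbol{n}_L^{\frac52}\bigl(\log^{\frac12}R+\log^{\frac12}\boldsymbol{n}_L+\log^{\frac12}m\bigr),
\]
and absorbing the fixed user-chosen weight $\alpha$ together with all absolute constants into one constant $c$ depending at most polynomially on $d$, $\|f\|_{L^\infty(\Omega)}$ and $\|g\|_{L^\infty(\partial\Omega)}$ yields exactly the claimed bound.

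The argument is essentially bookkeeping, so there is no genuine obstacle; the only points requiring a word of care are (i) the reduction of the two-sided quantity $\mathcal{E}_{stat}$ to the one-sided $\Delta\mathcal{E}_{i},\Delta\mathcal{E}_{b}$, and (ii) the fact that the estimate is naturally an \emph{expectation} bound over the collocation points (if a deterministic or high-probability statement is desired, one pays an additional concentration term via a bounded-difference inequality, which affects only the logarithmic factors and the constant). I would also make explicit that the constraint $2\le L\le c\log(d+3)$ inherited from Proposition \ref{prop:approx}, already used inside Theorem \ref{RademacherClx} to bound $\log\Lambda_i$, is what permits the depth $L$ to be swept into $c$, so that the final estimate displays only the parameters $\boldsymbol{n}_L$, $R$, $n$ and $m$.
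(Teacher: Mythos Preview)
Your proposal is correct and follows exactly the paper's approach: the paper simply states that the lemma follows by combining \eqref{Estm:stat1}, Lemma \ref{Estm:sata1}, and Theorem \ref{RademacherClx}, and your write-up is a faithful unpacking of that one-line argument. Your remarks about the two-sided supremum and the expectation-versus-deterministic subtlety are apt refinements that the paper itself glosses over.
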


With the preparation in the last two subsections on the bounds of approximation and statistical errors, we state the main result in the following.
\begin{theorem}\label{thm:main}
    Let $\epsilon>0$ be a given tolarance. Let $\rho$ be logistic function $\frac{1}{1+e^{-x}}$ or $\tanh$ function $\frac{e^x-e^{-x}}{e^x+e^{-x}}.$ Let $u\in H^3(\Omega)$ and $u_{\theta}^{\mathcal{S}}$ be the solution of the problems \eqref{eqn:Poisson0} and \eqref{PINN:formulation} respectively. Then for any $\mu\in(0,1)$ set the parametrized neural network function classes 
    $\mathcal{U} = \mathcal{N}_{\rho}\Bigl(c\log(d+3),c(d)\epsilon^{-\frac{d}{1-\mu}},c(d)\epsilon^{-\frac{9d+12}{2-2\mu}}\Bigr),$ and the number of interior sample points $n=c(\alpha,d)\epsilon^{-4-\frac{(44d+48)\log(d+3)-7d}{1-\mu}}$ and number of boundary sample points $m=c(\alpha,d)\epsilon^{-4-\frac{23d+24}{1-\mu}},$ and if the optimization error $\mathcal{E}_{opt}\leq \epsilon^2.$ Then 
\begin{align}\label{EstmDesired}
    \norm{u-u^{\mathcal{S}}_{\theta}}_{L^2(\Omega)}\leq c \epsilon.
\end{align}
where the constant $c$ depends on $\alpha$, $d$, $\|f\|_{L^\infty(\Omega)}$, $\|g\|_{L^\infty(\partial\Omega)}$ and $\|u\|_{H^3(\Omega)}$.

\end{theorem}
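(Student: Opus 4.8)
The plan is to combine the three building blocks already assembled in the excerpt: the error decomposition in Theorem \ref{ErrEstm}, the approximation bound in Lemma \ref{lem:approx}, and the statistical bound in Lemma \ref{Estm:stat}. Indeed, Theorem \ref{ErrEstm} gives
\begin{align*}
   \norm{u-u^{\mathcal{S}}_{\theta}}^2_{L^2(\Omega)}\leq c(\alpha)(\mathcal{E}_{app}+\mathcal{E}_{stat}+\mathcal{E}_{opt}),
\end{align*}
so it suffices to show each of the three terms on the right is bounded by a constant multiple of $\epsilon^2$, after which taking square roots yields \eqref{EstmDesired}. The optimization term is handled by the hypothesis $\mathcal{E}_{opt}\leq\epsilon^2$, so the work is entirely in calibrating the network architecture and the sample sizes against the other two terms.

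First I would invoke Lemma \ref{lem:approx}: with the prescribed choice $\mathcal{U} = \mathcal{N}_{\rho}\bigl(c\log(d+3),c(d)\epsilon^{-\frac{d}{1-\mu}},c(d)\epsilon^{-\frac{9d+12}{2-2\mu}}\bigr)$, which is exactly the architecture in the statement, one gets $\mathcal{E}_{app}\leq c(\norm{u}_{H^3(\Omega)})\epsilon^2$ directly. This fixes, for the remainder of the argument, the depth $L=c\log(d+3)$, the number of nonzero weights $\boldsymbol{n}_L = c(d)\epsilon^{-\frac{d}{1-\mu}}$, and the weight bound $R = c(d)\epsilon^{-\frac{9d+12}{2-2\mu}}$. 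The next step is to substitute these three quantities into the statistical-error bound of Lemma \ref{Estm:stat},
\begin{align*}
      \mathcal{E}_{stat}\leq c \brb{n^{-\frac12}\boldsymbol{n}_L^{4L-\frac{7}{2}}R^{4L}(\log^\frac12 R +\log^\frac12 \boldsymbol{n}_L +\log^\frac12 n)+m^{-\frac12}R^2 \boldsymbol{n}_L^{\frac52}\big(\log^\frac12 R+\log^\frac12 \boldsymbol{n}_L+\log^\frac12 m\big)},
\end{align*}
and then choose $n$ and $m$ large enough to force each summand below $\epsilon^2$. For the interior term, $\boldsymbol{n}_L^{4L-\frac72}R^{4L}$ is a power of $\epsilon^{-1}$ whose exponent, using $L=c\log(d+3)$, is on the order of $4L\cdot\frac{9d+12}{2-2\mu} + (4L-\frac72)\cdot\frac{d}{1-\mu}$; demanding $n^{-1/2}$ times this be $O(\epsilon^2)$ forces $n$ to be a power of $\epsilon^{-1}$ matching (up to the logarithmic slack and constants) the stated $n=c(\alpha,d)\epsilon^{-4-\frac{(44d+48)\log(d+3)-7d}{1-\mu}}$; the ``$-4$'' absorbs the target $\epsilon^2$ after squaring $n^{-1/2}$, and the logarithmic factors $\log^{1/2}R$, $\log^{1/2}\boldsymbol{n}_L$, $\log^{1/2}n$ are all $O(\log^{1/2}(1/\epsilon))$, which is dominated by any genuine negative power of $\epsilon$, so they can be swept into the constant. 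The boundary term is handled identically with $R^2\boldsymbol{n}_L^{5/2}$ in place of the larger interior prefactor, giving $m=c(\alpha,d)\epsilon^{-4-\frac{23d+24}{1-\mu}}$.

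Putting these three bounds together, $\mathcal{E}_{app}+\mathcal{E}_{stat}+\mathcal{E}_{opt}\leq c\epsilon^2$ with $c$ depending on $\alpha$, $d$, $\norm{f}_{L^\infty(\Omega)}$, $\norm{g}_{L^\infty(\partial\Omega)}$ and $\norm{u}_{H^3(\Omega)}$; plugging into Theorem \ref{ErrEstm} and taking the square root gives $\norm{u-u^{\mathcal{S}}_{\theta}}_{L^2(\Omega)}\leq c\epsilon$. I expect the only delicate point to be the exponent bookkeeping in the statistical step — one must carry the explicit powers of $\boldsymbol{n}_L$ and $R$ through $4L-\frac72$ and $4L$ with $L=c\log(d+3)$, collect them into a single exponent of $\epsilon^{-1}$, and verify it matches the claimed $\frac{(44d+48)\log(d+3)-7d}{1-\mu}$ and $\frac{23d+24}{1-\mu}$ after the $\epsilon^2$ is split off; everything else (Jensen, trace inequality, stability estimate, absorbing logarithms) is already done in the lemmas or is routine. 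One should also remark that $\mu\in(0,1)$ is arbitrary and enters only through these exponents, and that the hypothesis $u\in H^3(\Omega)$ — stronger than the $H^2$ needed merely to make sense of the strong residual — is what feeds Lemma \ref{lem:approx}.
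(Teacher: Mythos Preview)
Your proposal is correct and follows essentially the same route as the paper: invoke Theorem \ref{ErrEstm} for the decomposition, use Lemma \ref{lem:approx} with the stated architecture to bound $\mathcal{E}_{app}$, substitute the resulting $L$, $\boldsymbol{n}_L$, $R$ into Lemma \ref{Estm:stat} and choose $n$, $m$ to force $\mathcal{E}_{stat}\leq c\epsilon^2$, then combine with the hypothesis on $\mathcal{E}_{opt}$. Your discussion of the exponent bookkeeping and the absorption of logarithmic factors is in fact more explicit than the paper's own proof, which simply asserts the choice of $n$ and $m$ yields $\mathcal{E}_{stat}\leq c\epsilon^2$ without writing out the intermediate powers.
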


\begin{proof}
    From Theorem \ref{ErrEstm}, we have
    \begin{align}\label{Estm_erruse}
   \norm{u-u^{\mathcal{S}}_{\theta}}^2_{L^2(\Omega)}\leq c(\alpha)(\mathcal{E}_{app}+\mathcal{E}_{sta}+\mathcal{E}_{opt}).
\end{align}
From Lemma \ref{lem:approx}, there exists neural network function classes  $\mathcal{U} = \mathcal{N}_{\rho}\Bigl(c\log(d+3),c(d)\epsilon^{-\frac{d}{1-\mu}},c(d)\epsilon^{-\frac{9d+12}{2-2\mu}}\Bigr),$ such that
\begin{align}\label{EstmAppUse}
\mathcal{E}_{app}\leq c(\norm{u}_{H^3(\Omega)}) \epsilon^2.
\end{align}
Now in Lemma \ref{Estm:stat} we can put $L=c\log(d+3), \boldsymbol{n}_L=c(d)\epsilon^{-\frac{d}{1-\mu}},R=c(d)\epsilon^{-\frac{9d+12}{2-2\mu}}$ and setting $n=c(\alpha,d)\epsilon^{-4-\frac{(44d+48)\log(d+3)-7d}{1-\mu}}$ and $m=c(\alpha,d)\epsilon^{-4-\frac{23d+24}{1-\mu}},$ we obtain 
\begin{align}\label{EstmStatUse}
   \mathcal{E}_{stat}\leq c \epsilon^2. 
\end{align}
Combining \eqref{Estm_erruse}, \eqref{EstmAppUse}, and \eqref{EstmStatUse} we obtain the desired estimate \eqref{EstmDesired}. 
\end{proof}

In the next three remarks, we discuss the error estimates for the other three methods(DRM, VPINN, and WAN). The error estimate for DRM is quite similar to the PINN. However, the error analysis for VPINN and WAN is much more involved and needs many technicalities. Therefore we skip the proofs of all other three methods, and an interested reader can find those in the corresponding references given below.

\begin{remark}\label{rem:error_estm_DRM}
    In this remark we discuss an error estimate result for DRM solution from \cite[Section 6]{JiaoLai:2021error}. The error analysis for DRM can be derived similarly to the PINN method (described earlier in this Section). We choose $\mathcal{A}=\mathbb{I}_{d \times d}$, $\bm{\beta}=\mathbf{0},$ $g=0$ in the equation \eqref{eqn:Poisson0}. We state the error estimate result for DRM from Section 6 of \cite{JiaoLai:2021error} as follows: Let $u\in H^2(\Omega)$ and $u^{\mathcal{S}}_\theta$ be the continuous and neural network solutions obtained from the problems \eqref{eqn:Poisson0} and \eqref{drm_emploss} respectively. Then for any $\mu\in(0,1),$ set the parametrized neural network function classes 
    $\mathcal{U} = \mathcal{N}_{\rho}\Bigl(c\log(d+1),c(d)\epsilon^{-\frac{5d}{2(1-\mu)}},c(d)\epsilon^{-\frac{45d+40}{4-4\mu}}\Bigr),$ and the number of interior and boundary sample points $n=m=c(d,\alpha,\Omega)\epsilon^{-\frac{cd\log(d+1)}{1-\mu}}$(with $c$ some positive constant), and let the optimization error $\mathcal{E}_{opt}\leq \epsilon^2.$ Then the following $H^1$ error estimates of the solution holds:
    \begin{align*}
    \norm{u-u^{\mathcal{S}}_{\theta}}_{H^1(\Omega)}\leq c(d,\alpha,\Omega,\|f\|_{L^\infty(\Omega)},\| u\|_{H^2(\Omega)}) \epsilon.
\end{align*}
\end{remark}

\begin{remark}\label{rem:error_estm_VPINN}
    This remark concerns the a prior error estimates for the VPINN solution. For more details of the formulations and proofs of error estimates, we refer to the article \cite{berrone2022variational}. In this article, the authors slightly modify the loss function and propose the VPINN as follows:
\begin{align}\label{loss_modified_vpinn}
   \underset{w_\theta}{\text{min}}\quad \mathcal{L}_h(w_\theta) = \sum_{i=1}^{n_b} \alpha_i r_{h,i}^2(\mathcal{I}_h w_\theta),
\end{align}
where $r_{h,i}$ are defined in \eqref{residuals} and $\mathcal{I}_h w_\theta$ be an interpolation of $w_\theta$ onto the space $V_h$(defined in subsection \ref{subsec:vpinn}). We state the error estimate (see \cite[Theorem 6]{berrone2022variational}) for VPINN solution as follows: Let $u\in H^{k+1}(\Omega)$ be the weak solution of \eqref{eqn:Poisson0} and, $u_\theta$ be a neural network solution of VPINN obtained from \eqref{loss_modified_vpinn}. Then the following error estimate holds: 
\begin{align*}
  \norm{u-\mathcal{I}_h u_\theta}_{1,\Omega}\lesssim \left( 1+h^{-1}\right) h^k \left( \norm{f}_{L^2(\Omega)} + \norm{u}_{H^{k+1}(\Omega)} \right), 
\end{align*}
where $h$ is the mesh-size of the mesh $\mathcal{T}_h.$ The presence of the $h^{-1},$ which originates from the loss function, makes the estimate sub-optimal. 
\end{remark}

\begin{remark}\label{rem:error_estm_WAN}
    In this remark, we discuss the error analysis for WAN method. Due to the formulation of the WAN method \eqref{wan_minimax}, an error estimate of the form \eqref{error_form} is still unknown. We follow the error estimates for WAN method from the article \cite{bertoluzza2024best}. We select the functions $c$ and $\bm{\beta}$ in \eqref{eqn:Poisson0} such that $c-\nabla \cdot \bm{\beta}/2\geq 0,$ for the existence of a weak solution. We choose $g=0$ in \eqref{eqn:Poisson0} for simplicity of the analysis.  In the article \cite{bertoluzza2024best}, the authors derive a best approximation result for the WAN solution. The authors assume discrete inf-sup condition (see \cite[equation (28)]{bertoluzza2024best}) of the form
    \begin{align*}
        \kappa \norm{w_\theta}_{H^1(\Omega)} \leq \sup_{v_\eta \in V_\eta,v_\eta \neq 0} \frac{\mathcal{A}(w_\theta,v_\eta)}{\norm{v_\eta}_V} \quad \forall w_\theta \in W_\theta \cup S_\theta,
    \end{align*}
    where $W_\theta\subseteq H_0^1(\Omega)$ and $V_\eta\subseteq H_0^1(\Omega)$ are trial and test neural network spaces and $\kappa>0$. The set $S_\theta$ is a special function class defined in \cite[equation (17)]{bertoluzza2024best}. With the assumption of the above type inf-sup condition, the authors derive a best approximation result (see \cite[Lemma 3]{bertoluzza2024best}), as follows: 
    \begin{align*}
        \norm{u-u^*_\theta}_{H^1(\Omega)} \lesssim \inf_{w_\theta \in W_{\theta}} \norm{u-w_\theta}_{H^1(\Omega)}.
    \end{align*}
\end{remark}

    

\section{Acknowledgements} The authors extend their gratitude to Professor Neela Nataraj for proposing the idea of writing this review article on neural network-based solvers for PDEs, as well as for providing valuable comments and feedback.

\bibliographystyle{plain}
\bibliography{main}

\end{document}